\newtheorem{theorem}{Theorem}[section]
\newtheorem{prop}[theorem]{Proposition}
\newtheorem{proposition}[theorem]{Proposition}
\newtheorem{corollary}[theorem]{Corollary}
\newtheorem{lemma}[theorem]{Lemma}
\theoremstyle{definition} 
\newtheorem{definition}[theorem]{Definition}
\newtheorem{remark}[theorem]{Remark}
\newtheorem{notation}[theorem]{Notation} 
\newcommand{\cA}{\mathcal{A}}
\newcommand{\cD}{\mathcal{D}}
\newcommand{\cO}{\mathcal{O}}
\newcommand{\cU}{\mathcal{U}}
\newcommand{\ol}[1]{\overline{#1}}
\newcommand{\ran}{\right\rangle}
\newcommand{\lan}{\left\langle}
\newcommand{\lie}[1]{\ensuremath{\mathfrak{#1}}}
\renewcommand{\t}{\lie{t}}
\newcommand{\tensor}{\otimes}
\newcommand{\iso}{\cong}
\renewcommand{\P}{\bb{P}} 
\newcommand{\C}{\bb{C}}
\renewcommand{\L}{\bb{L}}
\newcommand{\Z}{\bb{Z}}
\newcommand{\R}{\bb{R}}
\newcommand{\Q}{\bb{Q}}
\newcommand{\T}{\bb{T}}
\newcommand{\qu}{/\kern-.7ex/}
\newcommand{\lqu}{\backslash \kern-.7ex \backslash}
\newcommand{\on}{\operatorname} 
\newcommand{\Aut}{\on{Aut}}
\newcommand{\Ham}{\on{Ham}} 
\newcommand{\Hom}{\on{Hom}}
\newcommand{\Pic}{\on{Pic}}
\newcommand{\NE}{\on{NE}}
\newcommand{\PD}{\on{PD}} 
\newcommand{\bb}[1]{\ensuremath{\mathbb{#1}}}
\newcommand{\tS}{{\widetilde S}}
\newcommand{\tD}{{\widetilde D}}
\newcommand{\tp}{\tilde{p}} 
\newcommand{\hD}{\widehat{D}} 
\newcommand{\hphi}{\hat{\phi}}
\newcommand{\hb}{\hat{b}} 
\newcommand{\hE}{\widehat{E}} 
\newcommand{\F}{\mathbb{F}} 
\newcommand{\id}{\operatorname{id}}
\newcommand{\ov}{\overline} 
\title{Seidel elements and mirror transformations}
\author{Eduardo Gonz\'alez }
\address{Department of Mathematics\\
UMASS Boston\\100 Morrisey Blvd\\
Boston, MA 02125. USA.}
\email{eduardo@math.umb.edu}
\urladdr{http://www.math.umb.edu/~eduardo/}
\thanks{E.G. is supported by NSF grant DMS-1104670 and H.I. is
  supported by Grant-in-Aid for Young Scientists (B) 22740042 }
\author{Hiroshi Iritani} 
\address{Department of Mathematics, Graduate School of Science\\
  Kyoto University\\
  Oiwake-cho, Kitashirakawa, Sakyo-ku, Kyoto, 606-8502, Japan.}
\email{iritani@math.kyoto-u.ac.jp}
\begin{document}
\begin{abstract} 
  The goal of this article is to give a precise relation between the
  mirror symmetry transformation of Givental and the Seidel elements
  for a smooth projective toric variety $X$ with 
  $-K_X$ nef. 
  We show that the Seidel elements entirely determine 
  the mirror transformation and mirror coordinates. 
\end{abstract}
\maketitle 

\begin{center}
\today
\end{center}
\tableofcontents

\section{Introduction}

Let $X$ be a smooth projective toric variety. The variety $X$
can be explicitly written as the symplectic reduction of the Hermitian
space $\C^m$ by a Hamiltonian action of a torus 
$(S^1)^r$, where $r$ is the Picard number of $X$. 
Let $D_1,\dots, D_m$ denote the classes in
$H^2(X)$ Poincar\'e dual to the toric divisors. Let $t_i$ denote the
coordinates in $H^2(X)$ with respect to an integral, nef basis
$p_1,\dots, p_r$, and let $q_i=\exp(t_i)$ be the exponential
coordinates. 
Recall that the mirror theorem of Givental \cite{Gi-A-98}
states that if $c_1(X)=-K_X=D_1+\dots + D_m$ 
is semipositive (nef),
then the cohomology valued function (of the B-model)
\[
I_X(y,z)=e^{\sum_{i=1}^r p_i\log y_i/z} 
\sum_{d\in \NE(X)_\Z} \prod_{i=1}^m 
\left(
\frac{\prod_{k=-\infty}^0
  (D_i+kz)}{ \prod_{k=-\infty}^{\lan D_i,
    d\ran}(D_i+kz)}
\right)
y_1^{d_1}\dots y_r^{d_r}
\]
determines the J-function $J_X(q,z)$ 
(of the the $A$-model or Gromov-Witten theory) 
\begin{equation} 
\label{eq:JX} 
J_X(q,z) = e^{\sum_{i=1}^r p_i \log q_i/z} 
\left( 1 + 
\sum_{\alpha} 
\sum_{d\in \NE(X)_\Z \setminus\{0\}} 
\lan \frac{\phi_\alpha}{z(z-\psi)} \ran^X_{0,1,d} 
\phi^\alpha q_1^{d_1} \cdots q_r^{d_r}\right) 
\end{equation} 
via a change of coordinates 
$\log q_i= \log y_i + g_i(y)$, $i=1,\dots,r$, 
in such a way that $I_X(y,z)=J_X(q,z)$. 
Here the variables $y_1,\dots,y_r$ of the B-model 
are called \emph{mirror coordinates} and this 
change of variables is called \emph{mirror transformation}  
(or \emph{mirror map}). 
This relation can be used to show that 
the small quantum cohomology ring $QH(X)$ differs
from the original presentation suggested by Batyrev \cite{Ba-Qu93}
only by this change of coordinates. 
We refer to Givental \cite{Gi-A-98} and 
the text by Cox and Katz \cite{CoKa-Mi99} for
further details on this discussion.

Let $(Y,\omega)$ denote a symplectic manifold. 
For a loop $\lambda$ in the group of Hamiltonian 
symplectomorphisms on $Y$, Seidel 
\cite{Se-pi97}, assuming $Y$ monotone, 
constructed an invertible element $S(\lambda)$ 
in quantum cohomology counting sections of the 
associated clutched Hamiltonian fibration 
$E_\lambda \to \P^1$ with fibre $Y$. 
The \emph{Seidel element} $S(\lambda)$ defines an element
in $\Aut(QH(Y))$ via quantum multiplication, and the
association $\lambda \mapsto S(\lambda)$ a representation of
$\pi_1(\Ham(Y))$ on $QH(Y)$. This construction was later extended
for all symplectic manifolds, see for instance McDuff and Tolman
\cite{McTo-To06} where Seidel's construction was used to study the
underlying symplectic topology in toric manifolds. 

In the case where the loop $\lambda$ 
is a (relatively simple) circle action, 
the asymptotic form 
of $S(\lambda)$ can be written explicitly in terms of
geometric and Morse theoretic information 
\cite[Theorem 1.10]{McTo-To06}. 
Regarding $X$ as a Hamiltonian space, they consider
the Seidel element\footnote{Here $\tS_j$ is a variant 
of the Seidel element $S_j = S(\lambda_j)$ given by 
$S_j = q_0 \tS_j$, where $q_0$ is the variable 
corresponding to the maximal section class of the 
associated bundle $E_j$.}
$\tS_j$ associated to an action $\lambda_j$ that
fixes the toric divisor $D_j$ 
(see Section \ref{subsec:action_fixing_divisor}). 
It is proved that $\tS_j$ is a series of
the form $\tS_j = D_j+ O(q)$
if $-K_X$ is nef, and $\tS_j = D_j$ 
in the Fano case ($-K_X$ is ample). 
Moreover, it is shown that these elements 
satisfy the following Batyrev's relations: 
\begin{equation} 
\label{eq:Batyrev_relation}
\prod_{j:\lan D_j, d\ran>0} 
\tS_j^{\lan D_j, d\ran} 
= q^d \prod_{j:\lan D_j, d \ran <0} \tS_j^{-\lan D_j, d\ran} 
\quad \text{ in $QH(X)$}   
\end{equation} 
where $d\in H_2(X,\Z)$. 
This shows that $QH(X)$ is abstractly isomorphic to 
the Batyrev presentation \cite[Proposition 5.2]{McTo-To06}. 
Based on this, one can conjecture that Seidel
elements should be closely related 
or even equivalent to the mirror transformation. 
In a recent paper Fukaya, Oh, Ohta and Ono
\cite{FuOh-La10A} have used Seidel elements 
in the mirror symmetry context as well. 

We begin with a calculation of Seidel elements. 
It turns out that $\tS_j$ appears 
as a coefficient of the J-function $J_{E_j}$ 
of the associated bundle $E_j$ 
(Proposition \ref{p:SeidelfromJ}), 
and thus one can use the mirror transformation for $E_j$ 
(which is toric as well) to calculate $\tS_j$. 
Let $(m_{ij})$ be the matrix of toric divisors 
such that $D_j = \sum_{i=1}^r m_{ij} p_i$.  
As mirror analogues of Seidel elements, 
we introduce the \emph{Batyrev elements} 
$\tD_j = D_j +O(y) \in H^*(X)\llbracket y \rrbracket$ 
by 
\begin{equation} 
\label{eq:Batyrev_element} 
\tD_j = \sum_{i=1}^r m_{ij} \tp_i \quad \text{with} \quad 
\tp_i = \sum_{k=1}^r 
\frac{\partial \log q_k}{\partial \log y_i} p_k.  
\end{equation} 
These elements satisfy Batyrev's relations \eqref{eq:Batyrev_relation} 
with $q^d$ there replaced by $y^d$ (Proposition 
\ref{p:bat-rel}). We can calculate 
them as explicit hypergeometric series in $y$ 
(Lemma \ref{l:tDj}). 
Note that the Batyrev elements and the Seidel 
elements satisfy the \emph{same} relation in 
\emph{different} coordinates. 
We see that they only differ by a function multiple. 

\begin{theorem}[Theorem \ref{th:batsei}, Lemma \ref{l:g0}]
Let $X$ be a smooth projective toric variety 
with $-K_X$ nef. 
The Seidel element associated to the toric divisor $D_j$ is given by
\[
\tS_j(q_1\dots q_r)= 
\exp\left(-g_0^{(j)} (y_1,\dots,y_r)\right) 
\tD_j(y_1,\dots y_r).  
\] 
The correction term $g_0^{(j)}$ here 
is an explicit hypergeometric series \eqref{eq:g0} in $y$.   
\end{theorem}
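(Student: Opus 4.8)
The plan is to compute $\tS_j$ through Givental's mirror theorem applied to the total space $E_j$, and to match the outcome against the explicit form of $\tD_j$. By Proposition \ref{p:SeidelfromJ}, the Seidel element $\tS_j$ occurs as a coefficient of the $J$-function $J_{E_j}$ of the associated bundle, attached to the maximal section class (the variable $q_0$). Since $E_j$ is again a smooth projective toric variety, $J_{E_j}$ is determined by its $I$-function $I_{E_j}$ together with the mirror transformation for $E_j$, so the whole computation reduces to understanding how these structures for $E_j$ restrict to those of $X$.

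First I would write down $I_{E_j}$ from the toric data of $E_j$. The fan of $E_j$ consists of lifts of the rays of $X$ together with the two rays of the $\P^1$-fibre, the lift being dictated by the weights of the circle action $\lambda_j$. Consequently $\NE(E_j)_\Z$ is generated by $\NE(X)_\Z$ together with the section and fibre classes, and $I_{E_j}$ is a double sum whose expansion in the section variable $q_0$ has coefficients assembled from the hypergeometric factors of $I_X$ with arguments shifted according to $\lambda_j$. By Proposition \ref{p:SeidelfromJ} the coefficient attached to the maximal section class recovers $\tS_j$, whose leading term is the divisor class $D_j=\sum_i m_{ij}p_i$.

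Next I apply Givental's mirror theorem to $E_j$. The mirror map of $E_j$ splits: on the variables $y_1,\dots,y_r$ inherited from $X$ it coincides with the mirror map $\log q_i=\log y_i+g_i(y)$ of $X$, while the extra fibre direction contributes a new component. Because $E_j$ is in general \emph{not} Fano even when $X$ is, this fibre direction carries a nontrivial degree-zero part of $I_{E_j}$, namely the scalar series $g_0^{(j)}(y)$ computed in Lemma \ref{l:g0}. Passing from $I_{E_j}$ to $J_{E_j}$ and reading off the maximal section coefficient, this degree-zero term factors out as the scalar $\exp(-g_0^{(j)})$; the remaining cohomology-valued factor is the image of $D_j=\sum_i m_{ij}p_i$ under the mirror transformation, in which each basis element $p_i$ is replaced by $\tp_i=\sum_k(\partial\log q_k/\partial\log y_i)\,p_k$. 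This is precisely $\tD_j$, and comparison with its explicit hypergeometric form in Lemma \ref{l:tDj} closes the identification.

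The main obstacle is the bookkeeping in this last step: one must pin down exactly which power of $q_0$ carries the Seidel element, control how the lift determined by $\lambda_j$ redistributes the shifted hypergeometric factors, and confirm that the degree-zero scalar separates cleanly from the cohomology-valued part so that what remains is genuinely $\sum_i m_{ij}\tp_i$ rather than a less symmetric expression. A subsidiary but essential point is that $\tS_j$ is naturally a series in the $A$-model variables $q$, whereas $\tD_j$ is a series in the mirror coordinates $y$; the two can be compared only after verifying that the mirror map of $E_j$ restricts to that of $X$ on the shared variables $y_1,\dots,y_r$, so that the substitution $\log q_i=\log y_i+g_i(y)$ intertwines the two descriptions and forces the scalar discrepancy to be exactly $\exp(-g_0^{(j)})$.
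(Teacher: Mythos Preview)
Your overall strategy matches the paper's exactly: extract $\tS_j$ as the $q_0$-coefficient of $J_{E_j}$ via Proposition \ref{p:SeidelfromJ}, invoke Givental's mirror theorem for $E_j$ (using that $-K_{E_j}$ is nef), check that the mirror map of $E_j$ agrees with that of $X$ on the variables $y_1,\dots,y_r$ (this is Lemma \ref{l:mirrormaps}), and then isolate the scalar factor $e^{-g_0^{(j)}}$ coming from $\log q_0 = \log y_0 + g_0^{(j)}$.

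The gap is precisely the step you flag as ``the main obstacle.'' You assert that after stripping off $e^{-g_0^{(j)}}$ the remaining cohomology-valued factor is $\sum_i m_{ij}\tp_i = \tD_j$, but you do not supply a mechanism for this. Appealing to Lemma \ref{l:tDj} does not close the loop: that lemma computes $\tD_j$ from its definition, not from the $y_0$-coefficient of $I_{E_j}$, so a comparison still requires an independent computation or identity. The paper resolves this with a differential equation: one checks directly that
\[
z\,\frac{\partial}{\partial y_0}\Bigl(y_0\frac{\partial}{\partial y_0}\Bigr) I_{E_j}
=\Bigl(\sum_{i=1}^r m_{ij}\,y_i\frac{\partial}{\partial y_i}-y_0\frac{\partial}{\partial y_0}\Bigr) I_{E_j}.
\]
Reading off the $z^{-1}$ term on both sides, the left side produces the coefficient $G_1^{(j)}$ (the $y_0^1 z^{-2}$ part of $I_{E_j}$), while the right side, after pulling back by $\iota^*$ and setting $y_0=0$, becomes $\sum_i m_{ij}\,y_i\partial_{y_i}$ applied to $I_X=J_X$, which by the chain rule is exactly $\tD_j$. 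This is the missing link that converts your heuristic ``replace $p_i$ by $\tp_i$'' into a proof; without it (or an equivalent direct hypergeometric computation of $\iota^*G_1^{(j)}$) the argument is incomplete.
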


Next we ask the converse: whether the Seidel 
elements determine the mirror transformation. 
One can see from the definition \eqref{eq:Batyrev_element} 
that the Batyrev elements $\tD_1,\dots,\tD_m$ can 
determine the Jacobian of the mirror map, in particular, 
the mirror map itself. 
Therefore, it is important to know when $g_0^{(j)}$ vanishes. 
We show that $g_0^{(j)}$ vanishes if and only if 
$-K_X$ is \textbf{big} on the toric divisor $D_j$, i.e.\ 
$(-K_X)^{n-1}\cdot D_j>0$.
In terms of 
the fan of $X$, this is also equivalent to 
the primitive generator $b_j$ 
of the corresponding 1-cone being 
a vertex of the fan polytope (Proposition \ref{p:vanishing}). 
Another important fact is that 
the Batyrev elements $\tD_j$ satisfy the 
same linear relations as the toric divisors do: 
\begin{equation} 
\label{eq:linear_relation} 
\sum_{i=1}^m c_j \tD_j =0 \quad 
\text{whenever} \quad 
\sum_{i=1}^m c_j D_j = 0.  
\end{equation} 
The Seidel elements $\tS_j$ do not necessarily 
satisfy the linear relations. 
The partial vanishing of $g_0^{(j)}$ and these  
linear relations are enough to reconstruct the Batyrev 
elements from the Seidel elements.

\begin{theorem}
\label{t:intro3} 
For a smooth projective toric variety $X$ 
with $-K_X$ nef, the Seidel elements $\tS_j$ 
entirely determine the Batyrev elements $\tD_j$ 
and in particular the mirror transformation. 
More precisely the Batyrev elements 
$\tD_j \in H^*(X)\llbracket q \rrbracket$, 
$j=1,\dots,m$ are uniquely characterized by  
the following conditions: 
\begin{enumerate}
\item $\tD_j = H_j \tS_j$ for some $H_j\in \Q\llbracket q\rrbracket$; 
\item $\tD_j = \tS_j$ if $(-K_X)^{n-1}\cdot D_j>0$; 
\item $\tD_j$ satisfy the linear relations 
\eqref{eq:linear_relation}. 
\end{enumerate} 
\end{theorem}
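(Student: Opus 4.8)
The plan is to establish existence of a family satisfying (i)--(iii) using the results already obtained, and then to prove uniqueness, which is the substantive part.

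\emph{Existence.} The genuine Batyrev elements satisfy (i)--(iii). Indeed, Theorem \ref{th:batsei} gives $\tS_j = \exp(-g_0^{(j)})\tD_j$, so (i) holds with $H_j = \exp(g_0^{(j)}) = 1 + O(q) \in \Q\llbracket q\rrbracket$; Proposition \ref{p:vanishing} shows $g_0^{(j)}=0$ exactly when $(-K_X)^{n-1}\cdot D_j>0$, which is (ii); and (iii) is inherited from the toric divisors via the defining formula \eqref{eq:Batyrev_element}, since $\tD_j=\sum_i m_{ij}\tp_i$ forces $\sum_j c_j\tD_j=\sum_i(\sum_j c_jm_{ij})\tp_i=0$ whenever $\sum_j c_jD_j=0$. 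That recovering the $\tD_j$ recovers the mirror transformation is the observation already made above: the $\tD_j$ determine the $\tp_i$, hence the Jacobian $\partial\log q_k/\partial\log y_i$, and hence, after integration with the normalization $g_i=O(y)$, the mirror map itself.

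\emph{Reduction for uniqueness.} Let $\tD_j$ and $\tD_j'$ be two families satisfying (i)--(iii), say $\tD_j=H_j\tS_j$ and $\tD_j'=H_j'\tS_j$. Since $\tS_j=D_j+O(q)$ and $\tD_j,\tD_j'=D_j+O(q)$, we have $H_j,H_j'=1+O(q)$; set $G_j=H_j-H_j'\in q\,\Q\llbracket q\rrbracket$. By (ii) together with the characterization in Proposition \ref{p:vanishing} of the big divisors as the vertices of the fan polytope $P=\operatorname{conv}(b_1,\dots,b_m)$ (where $b_j\in N$ is the primitive generator of the $j$-th ray), we get $G_j=0$ whenever $b_j$ is a vertex of $P$. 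Writing a linear relation as $\sum_j\langle u,b_j\rangle D_j=0$ for $u$ in the character lattice $M$, and subtracting the two instances of (iii), I obtain
\[
\sum_{j=1}^m \langle u,b_j\rangle\, G_j\, \tS_j = 0 \qquad \text{for all } u\in M .
\]
It suffices to deduce $G_j=0$ for all $j$.

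\emph{The core argument.} I would induct on the valuation $v(q^d)=\langle\omega,d\rangle$ attached to a fixed ample class $\omega$, which is strictly positive on $\NE(X)_\Z\setminus\{0\}$. Assume the coefficient of $q^{d'}$ in every $G_j$ vanishes for all $d'$ with $v(q^{d'})<v(q^d)$. Extracting the coefficient of $q^d$ in the displayed identity and using $\tS_j=D_j+O(q)$ together with positivity of $v$, only the constant term $D_j$ of $\tS_j$ survives, so that with $g_j:=[q^d]G_j$ one finds $\sum_j\langle u,b_j\rangle\, g_j\, D_j=0$ in $H^2(X;\Q)$ for all $u\in M$, where $g_j=0$ at the vertices of $P$. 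Via the divisor exact sequence $0\to M\to\Z^m\xrightarrow{\pi}\Pic(X)\to 0$ with $\pi(e_j)=D_j$, this says $\sum_j\langle u,b_j\rangle g_j e_j\in\operatorname{im}(M\to\Z^m)$ for every $u$; by injectivity this yields a linear $A\colon M_\Q\to M_\Q$ with $\langle Au,b_j\rangle=g_j\langle u,b_j\rangle$, whose transpose $B\in\operatorname{End}(N_\Q)$ satisfies $Bb_j=g_jb_j$. Thus every $b_j$ is an eigenvector of $B$, with eigenvalue $0$ at the vertices. Since $P=\operatorname{conv}(\{b_i:b_i\text{ a vertex}\})$, each $b_j$ is a convex combination $\sum_i\lambda_i b_i$ of vertex generators, whence $Bb_j=\sum_i\lambda_i Bb_i=0$; as $b_j\neq 0$ this forces $g_j=0$. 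The induction then gives $G_j=0$ for all $j$, proving uniqueness.

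\emph{Main obstacle.} The delicate point is the final step: passing from the purely linear constraint $\sum_j\langle u,b_j\rangle g_jD_j=0$ to the vanishing of the $g_j$. The translation into the eigenvector identity $Bb_j=g_jb_j$ is formal, but concluding $g_j=0$ rests essentially on the convexity fact that every non-vertex ray generator is a convex combination of vertex generators having eigenvalue $0$. This is exactly the combinatorial content of Proposition \ref{p:vanishing} identifying the big divisors with the vertices of the fan polytope, and is where the hypothesis that $-K_X$ is nef enters the reconstruction.
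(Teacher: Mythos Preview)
Your proof is correct and follows the same overall architecture as the paper: existence from Theorem~\ref{th:batsei}, Proposition~\ref{p:vanishing}, and the definition~\eqref{eq:Batyrev_element}; uniqueness by reducing, via $\tS_j=D_j+O(q)$, to the linear independence of $\{b_j\otimes D_j : b_j\text{ not a vertex}\}$ in $N_\Q\otimes H^2(X)$.

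The one genuine difference is how you prove that linear independence. The paper argues directly: for a fixed non-vertex $b_j$, write $b_j=\sum c_s b_{i_s}$ with the $b_{i_s}$ vertices; this relation defines a class $d\in H_2(X,\Q)$ with $\langle D_j,d\rangle=-1$ and $\langle D_l,d\rangle=0$ for every other non-vertex $l$, so contracting a putative relation $\sum h_l(b_l\otimes D_l)=0$ with $\id\otimes d$ isolates $-h_jb_j=0$. Your route is more structural: you repackage the system $\sum_j\langle u,b_j\rangle g_jD_j=0$ (all $u\in M$) as an endomorphism $B\in\operatorname{End}(N_\Q)$ with $Bb_j=g_jb_j$, then use that $B$ annihilates the vertex generators to conclude $Bb_j=0$ for every $b_j$. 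Both arguments rest on the same convexity fact (non-vertex generators lie in the convex hull of vertex generators); the paper's gives an explicit dual witness for each $b_j$, while yours trades that for a cleaner eigenvector picture. Incidentally, your final step can be shortened: since the fan polytope has nonempty interior (Lemma~\ref{l:orig}), its vertices already span $N_\Q$, so $B=0$ immediately and the convex combination for each $b_j$ is not needed. Your induction on the $\omega$-valuation is a correct way to make precise the paper's one-line reduction ``since $\tS_j=D_j+O(q)$ it suffices\dots''; the paper leaves this implicit, relying on the fact that a $\Lambda_X$-linear map whose reduction at $q=0$ is injective is itself injective.
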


We add a remark on mirror symmetry. 
The mirror coordinates $y_1,\dots,y_r$ here 
represent the complex moduli of 
the mirror Landau-Ginzburg model. 
There are no preferred choices of coordinates on 
the complex moduli space and both $q$ and $y$ 
serve as local coordinates. 
However the $y$ coordinates are considered to 
represent a global algebraic structure of the 
complex moduli space. 
Therefore our result suggests that Gromov-Witten 
theory itself (via torus action and Seidel elements) 
can reconstruct a global algebraization of 
the K\"{a}hler moduli space which is a priori 
a formal germ at $q=0$.  



\textbf{Acknowledgments.} 
We thank David Cox, Dusa McDuff, Kaoru Ono
and Chris Woodward for useful discussions regarding an early
draft of the paper. We also thank Changzheng Li and Kwokwai Chan
for helpful comments on an earlier version of the paper.  The authors are grateful for the hospitality at
the Centre International de Rencontres Math\'{e}matiques in Luminy, 
where this project was started. E.G. is supported by NSF grant DMS-1104670 and H.I. is
  supported by Grant-in-Aid for Young Scientists (B) 22740042.

\section{Seidel elements and J-functions}\label{s:SJ}
We introduce the main notation and constructions of Seidel
element and explain its relation to the J-function.

\subsection{Generalities} 
We begin with the definition of the Seidel element. 
Let $X$ be a smooth projective variety, equipped with a
$\C^\times$ action. 
Each $\C^\times$-orbit in $X$ 
contains a fixed point in its closure, 
and thus the associated $S^1$-action on $X$ 
is Hamiltonian by Frankel's theorem \cite{Fr-59}. 
In this paper we will restrict to this case,
however the construction works more generally 
in the symplectic category. 
The original definition is due to Seidel \cite{Se-pi97} for
monotone symplectic manifolds. 
The reader can consult \cite{McSa-J04} 
for a detailed exposition and \cite{LaMcPo-To99,Mc-Qu00} 
for the construction in general case. 
For relations with the computation of small
quantum cohomology rings see 
\cite{Mc-Qu00, Go-Qu06, Mc-Sy06, McTo-To06}.

\begin{definition}\label{eq:a-bundle}
The \textbf{associated bundle} of the $\C^\times$-action 
is the $X$-bundle over $\P^1$
\begin{equation*}
    E:=X\times (\C^{2}\setminus \{0\}) / \C^{\times}\to \P^1,
\end{equation*}
  where $\C^\times$ acts with the standard diagonal action
  $\lambda\cdot (x, z)=(\lambda x, \lambda z)$.
\end{definition}

Let $\phi_1,\dots, \phi_N$ denote a basis for the rational
cohomology $H^*(X;\Q)$. By abuse of notation, every time we omit
coefficients we mean rational cohomology. Let $\phi^1,\dots, \phi^N$
denote the dual basis, that is $(\phi_i,\phi^j)=\delta_{ij}$ where
$(\cdot,\cdot)$ is the usual pairing in cohomology. 
There is a (non-canonical) splitting \cite{Mc-Qu00} 
\begin{equation*}
  H^*(E) \cong H^*(X)\tensor H^*(\P^1).
\end{equation*}
We let $\hphi_1,\dots, \hphi_M$ denote a basis for $H^*(E)$, 
and let $\hphi^1,\dots, \hphi^M$ denote the dual basis. 
There is a unique $\C^\times$-fixed component $F_{\rm max} 
\subset X^{\C^\times}$ such that the 
normal bundle of $F_{\rm max}$ has only negative 
$\C^\times$-weights. 
When we take a Hamiltonian function 
for the $S^1$-action, 
$F_{\rm max}$ is the maximum 
set of the Hamiltonian. 
Each fixed point $x\in X^{\C^\times}$ 
defines a section $\sigma_x$ of $E$. 
We denote by $\sigma_0$ the section associated 
to a fixed point in $F_{\rm max}$. 
This maximal section defines a splitting 
\begin{equation} 
\label{eq:split_maximal}
H_2(E,\Z)/\text{tors} \cong 
\Z [\sigma_0] \oplus 
(H_2(X,\Z)/\text{tors}) 
\end{equation} 
Let $\NE(X)\subset H_2(X,\R)$ denote the Mori cone, 
that is the cone generated by effective curves 
and set $\NE(X)_\Z := \NE(X) \cap (H_2(X,\Z)/\text{tors})$. 
We introduce $\NE(E)$ and $\NE(E)_\Z$ similarly. 
\begin{lemma} \label{l:split}
$\NE(E)_\Z = \Z_{\ge 0} [\sigma_0] + \NE(X)_\Z$. 
\end{lemma}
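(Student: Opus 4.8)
The plan is to prove the nontrivial inclusion $\NE(E)_\Z\subseteq\Z_{\ge0}[\sigma_0]+\NE(X)_\Z$ by a specialization (degeneration) argument; the reverse inclusion is immediate, since $\sigma_0$ is an effective section and any curve inside a fibre $\cong X$ is effective in $E$, so for $n\ge0$ and $\beta\in\NE(X)_\Z$ the class $n[\sigma_0]+\beta$ is represented by $n$ copies of $\sigma_0$ together with an effective curve in a fibre. Since $\NE(E)$ is generated by classes of irreducible curves and the splitting \eqref{eq:split_maximal} is compatible with its two summands, it suffices to treat a single irreducible curve $C\subset E$. Writing $\pi:E\to\P^1$ for the projection and $a:=\pi_*[C]=\deg(\pi|_C)$, one has $\pi_*[\sigma_0]=1$ while $\pi_*$ annihilates the fibre summand $H_2(X)$, so $a\ge0$ is exactly the $[\sigma_0]$-coefficient of $[C]$. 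If $a=0$ then $C$ lies in a fibre and $[C]\in\NE(X)_\Z$, so the real task is the multisection case $a\ge1$, where I must show $[C]-a[\sigma_0]\in\NE(X)_\Z$.

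First I would introduce the residual $\C^\times$-action on $E$ (call it $T$) covering the scaling action on $\P^1$ that fixes $0$ and $\infty$, namely $t\cdot[x,z_0,z_1]=[x,z_0,tz_1]$. In the trivialisation over $z_0\ne0$ this reads $(w,x)\mapsto(tw,x)$, so $T$ acts trivially on the fibre $X_0$ over $0$ and induces the original $\C^\times$-action on the fibre $X_\infty$ over $\infty$; its fixed locus is $X_0$ together with the fixed points $X^{\C^\times}$ in $X_\infty$. Because $E$ is projective, the orbit $t\cdot C$ has a flat limit $C_0:=\lim_{t\to0}t\cdot C$ in the Chow variety of $1$-cycles, and $C_0$ is a $T$-invariant effective cycle with $[C_0]=[C]$. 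I would then decompose $C_0$ into $T$-invariant irreducible components and compute the contribution of each.

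The analysis of this limit is the heart of the matter. A $T$-invariant component is either \emph{vertical}, contained in a fibre and hence contributing a class in $\NE(X)_\Z$, or it dominates $\P^1$. For the dominant part I would track the fibre coordinate: computing $t\cdot C$ in the $z_0\ne0$ trivialisation shows that as $t\to0$ the horizontal limit is a section whose value over $\infty$ is $\lim_{s\to\infty}s\cdot x_C(\infty)$, the limit of the original action as $s\to\infty$. Since $F_{\rm max}$ is precisely the fixed component with all normal weights negative, it is the attractor for $s\to\infty$, so these limiting values land in $F_{\rm max}$ and the horizontal section has the same class as $\sigma_0$ (the connected $F_{\rm max}$ yields homologous sections from all its fixed points). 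Matching degrees via $\pi_*$, the dominant components contribute exactly $a[\sigma_0]$ while the remaining, necessarily effective, vertical cycle lies in $\NE(X)_\Z$; hence $[C]=[C_0]=a[\sigma_0]+\beta$ with $\beta\in\NE(X)_\Z$.

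The hard part will be the boundary case in which $x_C(\infty)$ lies on a non-generic Bialynicki--Birula stratum, so that $\lim_{s\to\infty}s\cdot x_C(\infty)$ lands in a fixed component $F\ne F_{\rm max}$ and the horizontal limit is the section $\sigma_F$ rather than $\sigma_0$. To close the argument I would prove the comparison sub-lemma $[\sigma_F]-[\sigma_0]\in\NE(X)_\Z$ for every fixed component $F$: choosing a point whose $s\to\infty$ limit lies in $F_{\rm max}$ and whose $s\to0$ limit lies in $F$, the closure of its $\C^\times$-orbit is an effective rational ``gradient'' curve in $X$ realising the vertical difference as an effective class, after an evident induction down the poset of fixed components topped by $F_{\rm max}$. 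This sub-lemma is exactly where the negativity of the normal weights of $F_{\rm max}$ (its extremality, or minimal-area property) is indispensable, and it is what upgrades the naive degeneration into the stated equality of cones.
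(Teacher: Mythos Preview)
Your proof is correct and shares its essential architecture with the paper's argument: degenerate an arbitrary curve to a torus-invariant cycle, classify the invariant pieces, and then show that every fixed-point section $\sigma_F$ satisfies $[\sigma_F]-[\sigma_0]\in\NE(X)_\Z$ by chaining $\C^\times$-orbit closures (gradient curves) from $F$ up to $F_{\rm max}$.

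The difference is in how economically the first step is carried out. You use only the single $\C^\times$-action $T$ covering the rotation of $\P^1$, which forces you to compute the flat limit explicitly, track $\lim_{s\to\infty}s\cdot x_C(\infty)$ for each branch, and then split into the generic case (limit in $F_{\rm max}$) versus the boundary case (limit in some other $F$). The paper instead invokes the full $T^2=\C^\times\times\C^\times$-action on $E$ from the outset. A $T^2$-invariant irreducible curve is automatically either a fibre curve in $X_0$ or $X_\infty$, or a section $\sigma_x$ through a fixed point $x\in X^{\C^\times}$; there is no need to analyze limits of branch points or distinguish generic from non-generic strata. This collapses your two middle paragraphs into one sentence and lands directly on the sub-lemma. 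The sub-lemma itself is proved in the paper exactly as you outline, by embedding each orbit closure as a Hirzebruch surface $\F_k\hookrightarrow E$ and using the standard relation between its two section classes, then chaining such orbits from $x$ to $F_{\rm max}$. So your ``hard part'' is the entire content of the paper's proof; the extra $\C^\times$ just buys a cleaner reduction to it.
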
 
\begin{proof} 
The associated bundle $E$ has the 
$T^2 = \C^\times \times \C^\times$ action 
defined by $(t_1,t_2) \cdot [x,(z_1,z_2)] 
=[t_1 x, (z_1, t_2 z_2)]$. 
By the $T^2$-action, every curve 
can be deformed to a sum of $T^2$-invariant curves 
in the same homology class. 
A $T^2$-invariant curve is either contained 
in the fibres at $0$, $\infty$ or 
in the subspace $F\times \P^1 \subset E$ 
for some fixed component $F\subset X^{\C^\times}$. 
Therefore it suffices to show that the 
section class $[\sigma_x]$ associated to 
a fixed point $x\in X^{\C^\times}$ 
is of the form 
$[\sigma_0]+ d$ for some $d\in 
\NE(X)_\Z$. 
Take a nontrivial $\C^\times$-orbit $O$ in $X$ 
and consider its closure $\ov{O} \cong \P^1$. 
This gives an embedding of the 
Hirzebruch surface 
$\F_k \cong \ov{O}\times (\C^2\setminus \{0\})/\C^\times$ 
into $E$, where $k$ is the order of the 
stabilizer at $O$. It is well known that the 
maximal section class in $\F_k$ is the sum 
of the minimal section class and $k$ times 
the fibre class. By joining $x$ with the maximal 
fixed component $F_{\rm max}$ by a chain of 
$\C^\times$-orbits, we obtain a relation 
$[\sigma_x]= [\sigma_0] + d$ with $d$ a sum 
of $\C^\times$-invariant curves in $X$.  
\end{proof} 

Let $H_2^{\sec}(E,\Z)$ denote the affine 
subspace of $H_2(E,\Z)/\text{tors}$ 
which consists of section classes, 
that is the classes that project to 
the positive generator of $H_2(\P^1,\Z)$. 
We set $\NE(E)^{\sec}_\Z := 
\NE(E)_\Z \cap H_2^{\sec}(E,\Z)$. 
The above lemma shows that 
\begin{equation}
\label{eq:split2}
\NE(E)^{\sec}_\Z=  [\sigma_0] + \NE(X)_\Z.  
\end{equation} 
Let $r$ be the rank of $H^2(X)$. 
We choose an integral basis $\{p_1,\dots,p_r\}$ of $H^2(X)$ 
which pairs non-negatively with $\NE(X)_\Z$ (i.e.\ $p_i$ is nef). 
There are unique lifts of $p_1,\dots,p_r$ in $H^2(E)$ 
which vanish on $[\sigma_0]$. 
We denote these lifts by the same symbols 
$p_1,\dots,p_r$. By the above lemma, 
these lifts are also nef. 
Let $p_0\in H^2(E)$ denote the pull-back 
of the positive generator of $H^2(\P^1,\Z)$. 
Then $\{p_0,p_1,\dots,p_r\}$ forms an  
integral basis of $H^2(E)$. 
Let $q_1,\dots,q_r$ denote the Novikov variables of $X$ 
dual to the basis $p_1,\dots,p_r$. 
Similarly let $q_0,q_1,\dots,q_r$ denote the Novikov 
variables of $E$ dual to $p_0,p_1,\dots,p_r$. 
We take 
\[ 
\Lambda_X :=\Q\llbracket q_1,\dots, q_r \rrbracket,  
\qquad 
\Lambda_E := \Q\llbracket q_0,q_1,\dots,q_r\rrbracket 
\] 
to be the Novikov rings of $X$ and $E$ respectively. 
We write 
\begin{align*} 
q^d &:= q_1^{\lan p_1, d\ran}  
\cdots q_r^{\lan p_r, d \ran} \in \Lambda_X 
\quad 
\text{for } d\in \NE(X)_\Z   \\ 
q^{\beta} & := 
q_0^{\lan p_0, \beta \ran} q_1^{\lan p_1, \beta \ran} 
\cdots q_r^{\lan p_r, \beta \ran} 
\in \Lambda_E  
\quad 
\text{for } \beta \in \NE(E)_\Z. 
\end{align*} 
The small quantum cohomology ring 
\[
QH(X) = \left(
H(X) \otimes_\Q \Lambda_X, \bullet 
\right) 
\]
is defined over the Novikov ring $\Lambda_X$. 
Let $\lan\cdots \ran^{X}_{g,k,d}$
(resp. $\lan\cdots \ran^{E}_{g,k,d}$) 
denote the genus $g$, degree $d$
Gromov-Witten invariant of $X$ (resp.\ $E$) 
with $k$ insertions. We refer the reader to 
\cite{CoKa-Mi99} and references therein 
for the definition of algebraic Gromov-Witten invariants. 
Since the proof of Givental's mirror theorem \cite{Gi-Eq96, Gi-A-98} 
is based on algebraic geometry, we will work with 
algebraic Gromov-Witten invariants.

\begin{definition}\label{def:seidel}
  The \textbf{Seidel element} of $X$ is
  the class
  \begin{equation}\label{eq:seidel}
    S:=\sum_\alpha\sum_{\beta \in \NE(E)_\Z^{\sec}} 
    \lan
    \iota_*\phi_\alpha
    \ran^{E}_{0,1,\beta} \ \phi^{\alpha} q^{\beta}
  \end{equation}
in $QH(X)\tensor_{\Lambda_X}\Lambda_E$. 
Here $\iota\colon X \to E$ denotes the inclusion of a fibre. 
By Equation \eqref{eq:split2}, the Seidel element 
can be factorized as $S = q_0 \tS$ with $\tS \in QH(X)$. 
\end{definition}

In general, one can define the Seidel element 
$S(\lambda)$ for a loop $\lambda$ 
in the group $\Ham(X)$ of Hamiltonian diffeomorphisms and 
one gets a representation of $\pi_1(\Ham(X))$ 
on $QH(X)$ via the quantum multiplication by $S(\lambda)$. 
In our simple situation, this fact can be stated as follows. 
Suppose we have two commuting $\C^\times$-actions 
$\lambda_1,\lambda_2$. Let $\lambda_3=\lambda_1*\lambda_2$ 
be the composite $\C^\times$-action. 
Let $E_i$, $S_i$, $i=1,2,3$ be the $X$-bundle and 
the Seidel element associated to $\lambda_i$. 
The two commuting $\C^\times$-actions define 
the associated $X$-bundle $\hE$ over $\P^1\times\P^1$ 
such that the restriction to $\P^1\times \{z\}$ (resp.\ 
$\{z\} \times \P^1$) is isomorphic to the bundle 
$E_1$ (resp.\ $E_2$). 
Then $E_3$ can be obtained as the restriction 
of $\hE$ to the diagonal in $\P^1\times\P^1$. 
From this geometry we have a natural map\footnote
{In symplectic topology, $E_3$ is isomorphic to
the fibre sum $E_1\# E_2$.} 
\[
H_2^{\sec}(E_1,\Z) \times H_2^{\sec}(E_2,\Z) 
\to H_2^{\sec}(E_3,\Z).  
\] 
Under the multiplication of Novikov variables induced by this 
map, we have 
\[
S_3 = S_1 \bullet S_2. 
\]
By considering the inverse $\C^\times$-action, 
we find that the Seidel element is invertible 
in $QH(X)$ since the trivial $\C^\times$-action 
gives rise to the trivial Seidel element $q_0 1$. 

\subsection{J-functions}

\begin{definition}[\cite{Gi-Eq96, Gi-A-98}]
\label{d:JE} 
The (small) \textbf{J-function} of $E$ is the
cohomology valued function
\[
J_E(q,z)=e^{\sum_{i=0}^r p_i\log q_i/z} \left( 
1+ \sum_{\alpha=1}^M \sum_{
\beta \in \NE(E)_\Z \setminus \{0\}} 
\lan \frac{\hphi_\alpha}{z(z-\psi)}
\ran^E_{0,1,\beta}\hphi^{\alpha} q^\beta
\right),
\]
where $z$ is a formal variable, and $\psi$ is the first Chern class of
the universal cotangent line bundle $\mathcal{L}\to
\ol{\mathcal{M}}_{0,1}(E,d)$ at the marked point. 
The fraction $\hphi_\alpha /(z(z-\psi))$ in the 
correlator should be expanded in the series 
$\sum_{n\ge 0} z^{-2-n} \hphi_\alpha \psi^n$. 
The J-function $J_X(q,z)$ of $X$ 
is defined similarly (see Equation \eqref{eq:JX}). 
\end{definition}

In order to see the relation with the Seidel elements, 
we expand $J_E$ in terms of powers of $z$ as follows.

\begin{equation*}
  J_E(q,z)=e^{\sum_{i=0}^r p_i\log q_i/z} 
\left( 
    1+ z^{-2} \sum_{n=0}^{\infty} F_n(q_1,\dots,q_r) q_0^n +
    O(z^{-3}) 
\right),
\end{equation*}
where the functions $F_n(q_1,\dots,q_r)$ are power series with values
in $H^*(E)$.

\begin{proposition}
\label{p:SeidelfromJ}
The Seidel element of the action is 
given by $S= \iota^* (F_1(q_1,\dots,q_r) q_0)$.
\end{proposition}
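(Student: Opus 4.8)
The plan is to realize both $S$ and $\iota^*(F_1 q_0)$ as the $\iota^*$-image of a single geometric class on $E$ assembled from the one-point invariants, and then to pass between the two expressions using the projection formula together with the dual-basis identity. The whole argument is a bookkeeping computation once one recognizes that the $z^{-2}$-coefficient of $J_E$ in a fixed degree is the Poincar\'e dual of the pushforward of the virtual class.

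First I would isolate $F_1 q_0$. Expanding $\frac{1}{z(z-\psi)} = z^{-2} + z^{-3}\psi + \cdots$, the coefficient of $z^{-2}$ in the bracket of $J_E$ is $\sum_\alpha \sum_{\beta} \lan \hphi_\alpha \ran^E_{0,1,\beta} \hphi^\alpha q^\beta$, which by definition equals $\sum_{n\ge 0} F_n q_0^n$. Since $p_0$ is the pullback of the positive generator of $H^2(\P^1)$, the exponent of $q_0$ in $q^\beta$ is $\lan p_0,\beta\ran$, and by Lemma \ref{l:split} (equivalently \eqref{eq:split2}) the effective classes with $\lan p_0,\beta\ran=1$ are exactly $\NE(E)^{\sec}_\Z = [\sigma_0]+\NE(X)_\Z$. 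Thus $F_1 q_0 = \sum_{\beta\in\NE(E)^{\sec}_\Z} \big(\sum_\alpha \lan \hphi_\alpha \ran^E_{0,1,\beta} \hphi^\alpha\big) q^\beta$.

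The key step is to identify the inner sum as a genuine class on $E$. Writing $c_\beta := \PD\big(\ev_* [\ol{\mathcal{M}}_{0,1}(E,\beta)]^{\mathrm{vir}}\big) \in H^*(E)$, the definition of $c_\beta$ gives $\lan \hphi_\alpha \ran^E_{0,1,\beta} = (\hphi_\alpha, c_\beta)_E$, and since $\{\hphi_\alpha\}$, $\{\hphi^\alpha\}$ are dual bases of $H^*(E)$ we get $\sum_\alpha (\hphi_\alpha, c_\beta)_E \hphi^\alpha = c_\beta$. Hence $F_1 q_0 = \sum_{\beta} c_\beta q^\beta$ and therefore $\iota^*(F_1 q_0) = \sum_\beta \iota^* c_\beta\, q^\beta$, the sum being over $\beta\in\NE(E)^{\sec}_\Z$.

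Finally I would run the parallel computation for $S$. Here $\lan \iota_*\phi_\alpha \ran^E_{0,1,\beta} = (\iota_*\phi_\alpha, c_\beta)_E$, and the projection formula converts this to $(\phi_\alpha, \iota^* c_\beta)_X$; summing against the dual basis $\{\phi^\alpha\}$ of $H^*(X)$ yields $\sum_\alpha \lan \iota_*\phi_\alpha\ran^E_{0,1,\beta}\phi^\alpha = \iota^* c_\beta$. Comparing the two expressions coefficient by coefficient in $q^\beta$ then gives $S = \sum_\beta \iota^* c_\beta\, q^\beta = \iota^*(F_1 q_0)$. The only point that needs genuine care is the bookkeeping of the $q_0$-grading: one must confirm that extracting the $q_0^1$-coefficient selects precisely the section classes, which is exactly the content of \eqref{eq:split2}. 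I expect this matching of gradings to be the main (and rather mild) obstacle, the remainder being the standard interplay of the projection formula and dual bases.
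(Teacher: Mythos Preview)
Your proof is correct and is essentially the same argument as the paper's: the paper packages your projection-formula step into the single duality identity $\sum_\alpha \hphi_\alpha \otimes \iota^* \hphi^\alpha = \sum_\alpha \iota_*\phi_\alpha \otimes \phi^\alpha$, while you unpack this by introducing the intermediate class $c_\beta = \PD(\ev_*[\ol{\mathcal{M}}_{0,1}(E,\beta)]^{\mathrm{vir}})$ and applying the projection formula and dual-basis expansion separately. The bookkeeping of the $q_0$-grading via \eqref{eq:split2} is handled identically in both.
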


\begin{proof}
%
From Definition \ref{d:JE} we find 
\begin{equation*}
F_n(q_1,\dots,q_r)=\sum_{\alpha=1}^M 
\sum_{d\in \NE(X)_\Z}\lan
    \hphi_{\alpha}\ran^E_{0,1,d+n\sigma_0} \ q^d 
\hphi^{\alpha}.
\end{equation*}
Using the duality identity 
\begin{equation*}
\sum_{\alpha=1}^M \hphi_\alpha \tensor \iota^* \hphi^\alpha =
\sum_{\alpha=1}^N \iota_*\phi_\alpha\tensor\phi^\alpha, 
\end{equation*} 
we get 
\begin{equation}
\label{eq:pull}
\iota^* F_1(q_1,\dots,q_r)
=\sum_{\alpha=1}^N \sum_{d\in \NE(X)_\Z}
\lan i_*\phi_{\alpha}\ran^E_{0,1,d+\sigma_0} 
\ q^d \phi^{\alpha}.
\end{equation}
The conclusion follows from 
Equations \eqref{eq:seidel} and 
\eqref{eq:pull}.
\end{proof}

\section{Seidel elements for toric manifolds}

\subsection{Notation}\label{ss:not}

We now fix some notation on toric geometry for this paper. 
For more details see \cite{Au-To04, CoKa-Mi99, CoLi-To10}. 
For this paper a \textbf{toric manifold} $X$ is a
projective smooth toric variety, as constructed from the
following data.

\begin{enumerate}
\item An integral lattice $M \cong \Z^n$ 
and its dual $N=\Hom(M,\Z)$. 
We denote by $\lan \cdot ,\cdot\ran$ the natural pairing 
between $N$ and $M$. 
\item A fan $\Sigma$ in $N_\R:=N\tensor \R$ consisting of a collection
  of strongly convex rational polyhedral cones $\sigma\subset N_\R$,
  which is closed under intersections and taking faces.
\end{enumerate}

We shall assume that the fan $\Sigma$ is 
complete and regular.
Let $\Sigma(1)$ denote the set of $1$-cones (rays) in $\Sigma$, 
and we let $b_i$ denote the set of integral primitive generators 
of the $1$-cones. 
The group $N$ is the lattice of the torus $N\tensor \C^\vee$ 
and thus $M$ is the
lattice of characters in $N\tensor \C^\vee$.  
The \emph{fan sequence} of $X$ is the exact sequence 
\begin{equation}
\label{eq:fans}
\begin{CD} 
0 @>>> \L @>>> \Z^m @>>> N @>>> 0,
\end{CD}
\end{equation}
where the second map takes the canonical basis to the primitive
generators $b_1,\dots, b_m$ 
and $\L$ is defined to be the kernel of the second map. 
This in turn defines a torus $\T=\L\tensor
\C^\times$, with character and weight lattices $\L,\L^\vee$
respectively and a sequence
\begin{equation*}
\begin{CD} 
  0 @>>> \T @>>> (\C^\times)^m@>>> 
N\tensor \C^\times @>>> 0.
\end{CD} 
\end{equation*}
The dual of the sequence \eqref{eq:fans} is the \emph{divisor sequence}
\begin{equation}
\label{eq:divs}
\begin{CD}
  0 @>>>  M @>>> 
(\Z^m)^\vee @>>>  \L^\vee @>>> 0.
\end{CD} 
\end{equation}
The first arrow takes $v\in M$ into the tuple  $(\lan
v,b_i\ran)^m_{i=1}$. The images of the canonical basis under the
second map will be denoted by $D_i$, $i=1,\dots,m$. 

The weights $D_i$ give an homomorphism $\T\to (\C^\times)^m$, and we let
the torus $\T$ act on $\C^m$ via this homomorphism. The combinatorics
of the fan defines a stability condition of this action as
follows. Let $Z(\Sigma)$ denote the union
\begin{equation} 
\label{eq:del}
Z(\Sigma):=\bigcup_{I\in \cA} \C^I, 
\quad \C^I=\{(z_1,\dots,z_m):z_i=0 \text{ for }
i\notin I\}.
\end{equation} 
where $\cA$ is the collection of anti-cones, that is the subsets of
indices that do not yield a cone in the fan 
\[\cA:=\left\{I \ : \ \sum_{i\in I}
\R_{\geq 0}b_i \notin \Sigma\right\}.\]
The toric variety $X$ is defined as the quotient
\begin{equation*}
X:=[\cU/\T];\ \quad 
\cU:=\C^m \setminus Z(\Sigma).
\end{equation*}

Each character $\xi:\T\to \C^\times$ defines a line bundle 
\[
L_{\xi}:=\C\times_{\xi,\T} \cU\to X. \]
The correspondence $\xi\mapsto L_\xi$ yields an identification of the
Picard group with the character group of $\T$. 
Thus, we have
\[
 \L^\vee=\Hom(\T,\C^\times) \iso \Pic(X) 
\stackrel{c_1}{\iso} H^2(X,\Z). 
\]
The Poincar\'e dual of the prime toric divisor 
$\{z_i=0\}\subset X$ is the image of $D_i$ in $H^2(X,\Z)$. 
By abuse of notation, $D_i$ denotes both 
the divisor $\{z_i=0\}$ itself and its class in 
$H^2(X,\Z) \cong \L^\vee$. 
We note that $\L=H_2(X,\Z)$.

The K\"ahler cone $C_X$ of $X$, 
the cone consisting of K\"ahler classes,
is given by 
\[
C_X:=\bigcap_{I\in \cA} \sum_{i\in I} \R_{>0} D_i \subset
\L^\vee\tensor \R=H^2(X;\R).
\]

We assume that $C_X$ is nonempty so that $X$ is projective. 
We will need later the following notation. 
As before $p_1,\dots, p_r\in H^2(X,\Z)$ denote
a nef integral basis, that is an integral basis such that
\(
p_a\in \ov{C}_X.
\)
Then we write the toric divisors as
\begin{equation}
\label{eq:div-basis}
D_j=\sum_{i=1}^r m_{ij}p_i, 
\end{equation}
for some $m_{ij}$.  
The Mori cone $\NE(X)\subset H_2(X,\R)$ 
is the dual of the cone $\ov{C}_X$. 
As before $\NE(X)_\Z$ denotes 
the semi-group $\NE(X)\cap H_2(X,\Z)$.

We shall now explain the symplectic structure of $X$. 
Take $\T_\R$ to be maximal compact in $\T$. 
%
%
The $\T_\R$-action on $\C^m$ is generated by 
the Hamiltonian 
\[ 
h\colon \C^m \to \t^\vee_\R, \ \ \
h(z_1,\dots, z_m)=\sum_{i=1}^m|z_i|^2D_i.
\]  
Taking a K\"ahler class
$\eta\in C_X$, we have an homeomorphism (cf. \cite{Au-To04, Gu-Mo94})
\[
h^{-1}(\eta)/T_\R\iso X,
\]
which induces a symplectic structure (still denoted by) $\eta$ on $X$.  This fact
and the equivalence of the algebraic and symplectic Gromov-Witten
invariants \cite{LiTi-Co99} yield the following expected result.

\begin{lemma}
  The Seidel element of the symplectic toric manifold
  $(X,\eta)$ as defined in \cite{Se-pi97, McTo-To06} coincides
  with the one in Equation \eqref{eq:seidel}.
\end{lemma}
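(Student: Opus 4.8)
The plan is to reduce the statement to the comparison theorem between algebraic and symplectic Gromov--Witten invariants of \cite{LiTi-Co99}, once both Seidel elements are exhibited as the \emph{same} weighted count of sections of a \emph{single} Hamiltonian fibration over $\P^1$. The key point is that, for a Hamiltonian circle action, the algebraic construction of Definition \ref{eq:a-bundle} and the symplectic clutching construction of \cite{Se-pi97, McTo-To06} produce the same fibration $E\to \P^1$ with fibre $(X,\eta)$.

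First I would recall the symplectic picture. The maximal compact $S^1\subset \C^\times$ acts on $(X,\eta)$, and by Frankel's theorem \cite{Fr-59} this action is Hamiltonian, so it defines a loop $\lambda$ in $\Ham(X,\eta)$. McDuff and Tolman build the associated bundle $E_\lambda$ by gluing two copies of $X\times D^2$ over the equator of $S^2=\P^1$ using $\lambda$. I would check that this clutching is literally realized by the algebraic quotient: writing $\P^1$ in its two standard charts and trivializing $(\C^2\setminus\{0\})\to \P^1$ over each, the transition function over the overlap is exactly the $\C^\times$-cocycle whose restriction to $S^1$ is $\lambda$. Hence $E_\lambda\cong X\times(\C^2\setminus\{0\})/\C^\times = E$ as fibre bundles over $\P^1$.

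Next I would match the symplectic data. I endow $E$ with a K\"ahler form coming from the symplectic reduction description of $X$ together with a form on the base; its restriction to each fibre is $\eta$, and I verify that this makes $E\to \P^1$ a Hamiltonian fibration whose coupling class agrees with the one used in \cite{McTo-To06}. Since the relevant one-point invariants depend on the symplectic form only through its deformation class among fibration-compatible forms, and the space of such forms is connected, any two admissible choices give the same invariants. I would then identify the section classes: the distinguished maximal section $\sigma_0$ attached to $F_{\rm max}$, together with the splitting \eqref{eq:split_maximal} and the description $\NE(E)^{\sec}_\Z=[\sigma_0]+\NE(X)_\Z$ of \eqref{eq:split2}, reproduces both the normalization and the index set over which the symplectic Seidel element sums; in particular the Novikov variable $q_0$ dual to $[\sigma_0]$ matches the symplectic normalization and the factorization $S=q_0\tS$ is the one used on both sides.

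With the two fibration structures identified, the conclusion is immediate: the symplectic Seidel element of \cite{Se-pi97, McTo-To06} is the sum over section classes $\beta$ of the symplectic one-point invariants $\langle \iota_*\phi_\alpha\rangle^E_{0,1,\beta}$, while Equation \eqref{eq:seidel} is the same sum of the algebraic invariants. By \cite{LiTi-Co99} these invariants coincide on the smooth projective variety $E$, so the two elements agree. I expect the main obstacle to be the bookkeeping in the previous paragraph---namely verifying that the algebraic K\"ahler structure on $E$ is deformation equivalent, as a Hamiltonian fibration over $\P^1$, to the one produced by the symplectic clutching, and that the grading of section classes by $q_0$ is normalized identically on the two sides; everything else is a direct comparison of two identical formulas.
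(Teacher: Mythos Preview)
Your proposal is correct and follows the same approach as the paper: the paper's proof is essentially the one-sentence remark preceding the lemma, invoking the symplectic-reduction identification $h^{-1}(\eta)/T_\R\cong X$ and then the equivalence of algebraic and symplectic Gromov--Witten invariants \cite{LiTi-Co99}. You have simply spelled out the bookkeeping (identifying $E_\lambda$ with the algebraic bundle $E$, matching the Hamiltonian fibration structure, and aligning section classes and Novikov normalizations) that the paper leaves implicit.
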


\subsection{The $\C^\times$-action fixing a toric divisor}
\label{subsec:action_fixing_divisor}
For each divisor $D_j$ we take a $\C^\times$-action on
$X$ rotating around $D_j$ 
and describe the geometry of its associated bundle.  

Consider the action of $\C^\times$ on $\C^m$ given by
\[
(z_1,\dots,z_m)\mapsto 
(z_1,\dots,t^{-1}z_j, \dots, z_m), \ \ t\in
\C^\times 
\]
and the induced action on 
$X = (\C^m\setminus Z(\Sigma)) / \T$. 
The toric divisor $D_j= \{z_j=0\}$ is the 
maximal fixed component of this action. 
We extend this to the diagonal 
$\C^\times$-action on  
$(\C^m\setminus Z(\Sigma)) 
\times (\C^2\setminus \{0\})$ by
\[
(z_1,\dots,z_m, u,v)\mapsto (z_1,\dots,t^{-1}z_j, \dots, z_m, tu,tv),
\ \ t\in \C^\times.
\]
The associated bundle $E_j$ of 
the $\C^\times$-action on $X$ is given by 
\begin{equation*}
E_j=(\C^m\setminus Z(\Sigma))\times
(\C^2\backslash \{0\}) / \T\times \C^\times. 
\end{equation*}
Therefore $E_j$ is also a toric variety. 
We can identify $H^2(E_j,\Z)$ with the 
lattice of characters of $\T\times \C^\times$: 
\begin{equation}
\label{eq:splitE}
H^2(E_j,\Z) \cong \L^\vee\oplus \Z 
\cong H^2(X,\Z)\oplus \Z.
\end{equation}
This is dual to the splitting 
in Equation \eqref{eq:split_maximal}. 
In light of this splitting, the $m+2$ weights of 
$\T\times \C^\times$ defining $E_j$ 
are just given by
\begin{equation}
\label{eq:divE}
\hD_i=(D_i,0) \ \text{ for }\ i\neq j; 
\quad \hD_j=(D_j,-1);\quad 
\hD_{m+1}=\hD_{m+2}=(\vec{0},1).
\end{equation}
This in turn yields the divisor sequence
\[
\begin{CD}
0 @>>> M\oplus\Z @>>> 
(\Z^{m+2})^\vee @>{\hD}>> \L^\vee\oplus\Z @>>> 0.
\end{CD}
\]
The fan of $E_j$ is contained in $N_{\R}\oplus\R$. 
The following generators of the 1-cones 
\[
\hb_i=(b_i,0) \ \text{ for } \ 1\leq i \leq m; \quad 
\hb_{m+1}=(\vec{0},1);\quad 
\hb_{m+2}=(b_j,-1),
\]
yield the fan sequence for $E_j$ 
\[
\begin{CD} 
0 @>>> \L\oplus \Z @>>> 
\Z^{m+2} @>{\hb}>> N\oplus\Z @>>> 0 
\end{CD}
\]
which is dual to the divisor sequence above. 
We set 
\[
p_0 := (\vec{0},1) = \hD_{m+1} = \hD_{m+2} \in H^2(E_j). 
\]
Under the splitting \eqref{eq:splitE}, 
a nef integral basis $\{p_1,\dots,p_r\}$ 
of $H^2(X)$ can be lifted to a nef integral basis 
$\{p_0,p_1,\dots,p_r\}$ of $H^2(E_j)$. 
We have 
\[
C_{E_j} = C_X + \R_{>0} p_0, \quad 
c_1(E_j) = c_1(X) + p_0. 
\]
The following result is immediate.

\begin{lemma}
\label{lem:Ej-nef} 
If $-K_X$ is nef then for all $j$, $-K_{E_j}$ is nef.
\end{lemma}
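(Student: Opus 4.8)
The plan is to deduce the statement directly from the two identities
\[
C_{E_j} = C_X + \R_{>0}\, p_0, \qquad c_1(E_j) = c_1(X) + p_0,
\]
recorded immediately above, together with the standard fact that a class is nef precisely when it lies in the closure $\overline{C}$ of the Kähler cone $C$ (equivalently, when it pairs non-negatively with the Mori cone). The genuine content is packaged into those two identities, which I would take as established; what remains is an elementary cone manipulation, which is why the statement can be asserted as immediate.

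First I would record that the closure of a sum of cones contains the sum of the closures: if $a_n\to a$ with $a_n\in C_X$ and $s>0$, then $a_n + s\, p_0 \in C_X + \R_{>0}p_0 = C_{E_j}$ while $a_n + s\, p_0 \to a + s\, p_0$, so
\[
\overline{C_X} + \R_{\ge 0}\, p_0 \subseteq \overline{C_{E_j}}.
\]
In particular the cone $\overline{C_{E_j}}$ of nef classes on $E_j$ contains every class of the form $c + s\, p_0$ with $c\in\overline{C_X}$ and $s\ge 0$.

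Now I would apply this to the specific class $c_1(E_j) = c_1(X) + p_0$. By hypothesis $-K_X = c_1(X)$ is nef, i.e.\ $c_1(X)\in\overline{C_X}$, and the coefficient of $p_0$ is $1\ge 0$; hence $c_1(E_j)\in \overline{C_X}+\R_{\ge0}p_0\subseteq\overline{C_{E_j}}$, which is exactly the assertion that $-K_{E_j}$ is nef. The argument is uniform in $j$, so the conclusion holds for all $j$.

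There is essentially no obstacle once the two displayed identities are in hand: the reasoning never uses anything beyond the convexity of the cones and the fact that $p_0$ enters $c_1(E_j)$ with a non-negative coefficient. If one preferred a curve-theoretic proof, I would instead invoke Lemma \ref{l:split}, writing $\NE(E_j)_\Z = \Z_{\ge0}[\sigma_0] + \NE(X)_\Z$, and check $\langle c_1(E_j),\sigma_0\rangle\ge 0$ and $\langle c_1(E_j), d\rangle\ge 0$ for $d\in\NE(X)_\Z$ separately: the latter reduces at once to nefness of $-K_X$, and the former holds because $p_0$ pairs to $1$ with $[\sigma_0]$ while the lift of $c_1(X)$ pairs to $0$ with it. Either route is immediate.
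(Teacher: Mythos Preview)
Your proposal is correct and is exactly what the paper intends: it records the two identities $C_{E_j} = C_X + \R_{>0}\,p_0$ and $c_1(E_j) = c_1(X) + p_0$ and then declares the lemma immediate, and your argument simply spells out that immediate step. The alternative curve-theoretic route via Lemma~\ref{l:split} you sketch is equally valid and amounts to the same observation in dual form.
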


\subsection{I functions and the mirror maps.} 

We now recall Givental's
mirror Theorem \cite[Theorem 0.1]{Gi-A-98}. 
Let $\{p_0,p_1,\dots p_r\}$ be a  
nef basis of $H^2(E_j)$ as above. 
The I-function of $X$ is the 
$H^*(X)$-valued function: 
\[
I_X(y,z)=e^{\sum_{i=1}^r p_i\log y_i/z} 
\sum_{d\in\NE(X)_\Z} 
\prod_{i=1}^m \left( \frac{\prod_{k=-\infty}^0 (D_i+kz)}{
    \prod_{k=-\infty}^{\lan D_i, d\ran}(D_i+kz)} \right)
y^d. 
\]
Note that all but finitely many factors in the infinite products cancel.
Here $y^d = y_1^{\lan p_1,d\ran} \cdots 
y_r^{\lan p_r, d\ran}$. 
Similarly the I-function of $E_j$ is the 
$H^*(E_j)$-valued function 
\begin{equation}
\label{eq:IE}
I_{E_j}(y,z)=e^{\sum_{i=0}^r p_i\log y_i/z} 
\sum_{\beta\in \NE(E)_\Z}
\prod_{i=1}^{m+2} 
  \left(
    \frac{\prod_{k=-\infty}^0
      (\hD_i+kz)}{ \prod_{k=-\infty}^{\lan \hD_i,
        \beta \ran}(\hD_i+kz)}
  \right) y^\beta,
\end{equation}
where $y^\beta = y_0^{\lan p_0,\beta\ran} 
y_1^{\lan p_1, \beta\ran} \cdots y_r^{\lan p_r, \beta\ran}$.  

\begin{theorem}[Givental \cite{Gi-A-98}]
\label{t:mirror} 
Let $X$ be a toric manifold with $-K_X$ nef. 
Then we have 
\[
  I_X(y,z)=J_X(q,z)
\]
under an invertible change of variables of the form 
\begin{equation}
\label{eq:mirror_X}
    \log q_i= \log y_i + g_i(y_1, \dots, y_r), \quad 
    i=1,\dots,r 
\end{equation}
where $g_i(y)$ is a power series in $y_1,\dots,y_r$ 
which is homogeneous of degree zero 
with respect to the degree $\deg y^d = 2 \lan c_1(X), d\ran$ 
and $g_i(0)=0$.
\end{theorem}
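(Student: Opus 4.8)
The plan is to deduce this from Givental's symplectic formalism for genus-zero Gromov--Witten theory. Work in the loop space $\mathcal{H}=H^*(X)\otimes\Lambda_X((z^{-1}))$ with the symplectic pairing $\Omega(f,g)=\operatorname{Res}_{z=0}(f(-z),g(z))\,dz$, and let $\mathcal{L}_X\subset\mathcal{H}$ be the Lagrangian cone encoding all genus-zero invariants. The $J$-function enters through the fact that $-zJ_X(q,-z)$ is the unique family on $\mathcal{L}_X$ of the dilaton-shifted form $-z+\tau+O(z^{-1})$ with $\tau=\sum_i p_i\log q_i\in H^2(X)$; equivalently, among families of the shape $e^{\sum_i p_i\log q_i/z}(1+O(z^{-1}))$ lying on the cone, $J_X$ is distinguished by having no $z^{-1}$-term in the bracket. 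Granting this, the theorem reduces to two claims: (i) $I_X(y,z)$ lies on $\mathcal{L}_X$, and (ii) the $z$-expansion of $I_X$ supplies an $H^2(X)$-valued mirror map realizing the normalized form.

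For claim (i) the route is equivariant localization on the graph space. I would lift the $J$-function to the equivariant setting using the standard torus action on the domain $\mathbb{P}^1$ of the genus-zero stable maps and apply the Atiyah--Bott localization theorem. The fixed loci on which the curve concentrates over a single point of the base reproduce exactly the hypergeometric factors $\prod_i\prod_k(D_i+kz)^{\pm1}$ that define $I_X$, while the remaining fixed loci assemble into the recursion that characterizes membership in $\mathcal{L}_X$ (the Coates--Givental criterion controlling the principal parts of a family at its poles in $z$, together with the quadratic symplectic condition). Verifying that the toric hypergeometric series satisfies this recursion is the substance of the argument and is precisely the content of Givental's theorem.

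For claim (ii) the nef hypothesis $-K_X=\sum_i D_i$ does the work. Assigning $\deg D_i=\deg z=2$ and $\deg y^d=2\langle c_1(X),d\rangle$, one checks termwise that $I_X$ is homogeneous of degree zero. A pole count on the hypergeometric factor of a class $d\neq0$ shows that whenever some pairing $\langle D_i,d\rangle$ is negative the factor has no $z^0$-term; since $\langle c_1(X),d\rangle=0$ with $d\neq0$ forces such a negative pairing (the $D_i$ span $H^2$, so all pairings vanishing would give $d=0$), the $z^0$-coefficient of the bracket equals $1$ identically. Moreover the bound $\langle c_1(X),d\rangle\ge2$ for an effective class with all divisor pairings non-negative removes any $H^0$-contribution at order $z^{-1}$, so
\[
I_X(y,z)=e^{\sum_i p_i\log y_i/z}\Bigl(1+z^{-1}\sum_{i=1}^r g_i(y)\,p_i+O(z^{-2})\Bigr),
\]
where the $z^{-1}$-coefficient is $H^2(X)$-valued and each $g_i$ is a degree-zero-homogeneous power series supported on classes with $\langle c_1(X),d\rangle=0$, with $g_i(0)=0$. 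Substituting $\log q_i=\log y_i+g_i(y)$ into the prefactor of $J_X$ produces the same $z^{-1}$-term, so $I_X$ and $J_X(q(y),z)$ both lie on $\mathcal{L}_X$ with the normalized form and no residual $z^{-1}$-discrepancy; the uniqueness in claim (i) then forces $I_X(y,z)=J_X(q(y),z)$. Invertibility of the change of variables is automatic because $g_i=O(y)$ makes $\log y\mapsto\log q$ a formally unipotent transformation.

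The main obstacle is claim (i): showing that the explicit hypergeometric $I$-function genuinely lies on the Lagrangian cone. The homogeneity and normalization computations in claim (ii) are essentially bookkeeping once the nef condition is in hand, but cone-membership is the true geometric input and is where the localization and recursion analysis must be carried out in full. In the Fano case ($-K_X$ ample) this input remains nontrivial, but claim (ii) collapses: there are no nonzero effective classes with $\langle c_1(X),d\rangle=0$, so every $g_i$ vanishes and the mirror map is the identity $q=y$.
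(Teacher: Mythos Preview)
The paper does not supply a proof of this theorem: it is quoted as Givental's mirror theorem and attributed to \cite{Gi-A-98} without further argument, so there is nothing in the paper to compare your proposal against line by line.

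That said, your sketch is a faithful outline of the modern approach. A couple of remarks on provenance and on one step you glossed over. First, the Lagrangian cone formalism you invoke is due to Coates--Givental and postdates \cite{Gi-A-98}; Givental's original argument works directly with graph spaces and torus localization, proving that both $J_X$ and $I_X$ satisfy the same recursion (controlling principal parts at the fixed-point poles) together with a polynomiality constraint, and that these two conditions pin down the function up to the change of variables you describe. Your claim (i) is exactly this input, and you are right that it is the substantive part. Second, in claim (ii) you assert the bound $\langle c_1(X),d\rangle\ge 2$ for nonzero effective $d$ with all $\langle D_i,d\rangle\ge 0$; this is correct but deserves a word of justification: if the sum $\sum_i\langle D_i,d\rangle$ equalled $1$ then $d$, viewed via the fan sequence as an element of $\Z^m$, would be a single standard basis vector $e_j$, and $e_j\in\mathbb{L}=\ker(\Z^m\to N)$ would force $b_j=0$, contradicting that $b_j$ is a primitive ray generator. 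With that filled in, your normalization argument in (ii) matches the computation the paper performs later (Lemma~\ref{lem:productfactor-asymptotics} and the proof of Lemma~\ref{l:g0}) for the bundle $E_j$.
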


\begin{definition}
The coordinates $y_1,\dots,y_r$ are called 
the \textbf{mirror coordinates} of $X$. 
They are asymptotically the same as 
$q_1,\dots,q_r$ as $y \to 0$, in the sense that 
$q_i =y_i + \text{higher order terms}$. 
\end{definition}

Because $c_1(E_j)$ is nef, we can apply this   
mirror theorem to $E_j$. 
Hence we have 
\[
I_{E_j}(y,z) = J_{E_j}(q,z) 
\]
under a change of variables 
\begin{equation} 
\label{eq:mirror}
\log q_i = \log y_i + g_i^{(j)}(y_0,y_1,\dots,y_r), 
\quad i=0,\dots, r. 
\end{equation}
\begin{lemma} 
\label{l:mirrormaps} 
The function $g_i^{(j)}$ does not depend on $y_0$. 
Moreover we have 
\[
g_i^{(j)}(y_0,y_1,\dots,y_r) = g_i(y_1,\dots,y_r), 
\quad i=1,\dots,r.   
\] 
This means that the mirror maps for $X$ and $E_j$ 
coincide for $q_1,\dots,q_r$. 
\end{lemma}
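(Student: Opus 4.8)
The plan is to read the mirror map of $E_j$ directly off the $z^{-1}$-coefficient of $I_{E_j}$ and compare it with that of $X$. Write $I_{E_j}=e^{\sum_{i=0}^{r}p_i\log y_i/z}\,S_{E_j}(y,z)$ with $S_{E_j}=1+O(z^{-1})$, and similarly $I_X=e^{\sum_{a=1}^{r}p_a\log y_a/z}\,S_X(y,z)$. Since Theorem \ref{t:mirror} identifies $I_{E_j}=J_{E_j}$ and the bracket of $J_{E_j}=e^{\sum_i p_i\log q_i/z}(1+O(z^{-2}))$ carries no $z^{-1}$ term, the $z^{-1}$-coefficient of $I_{E_j}$ is purely of degree two; comparing coefficients gives $[z^{-1}]S_{E_j}=\sum_{i=0}^{r}g_i^{(j)}(y)\,p_i$, and likewise $[z^{-1}]S_X=\sum_{a=1}^{r}g_a(y)\,p_a$. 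Thus it is enough to analyse this single coefficient.

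First I would show that each $g_i^{(j)}$ is independent of $y_0$. By Theorem \ref{t:mirror} applied to $E_j$ the $g_i^{(j)}$ are power series, homogeneous of degree zero for $\deg y^\beta=2\langle c_1(E_j),\beta\rangle$ and vanishing at the origin, so only classes $\beta\in\NE(E_j)_\Z$ with $\langle c_1(E_j),\beta\rangle=0$ occur. By Lemma \ref{l:split} I may write $\beta=n[\sigma_0]+d$ with $n\ge 0$ and $d\in\NE(X)_\Z$, and then $\langle c_1(E_j),\beta\rangle=n+\langle c_1(X),d\rangle$. Because $-K_X$ is nef we have $\langle c_1(X),d\rangle\ge 0$, and since $n\ge 0$ the vanishing of the sum forces $n=0$; as $\langle p_0,\beta\rangle=n$, the exponent of $y_0$ in every contributing monomial is zero. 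This homogeneity argument is precisely where the nef hypothesis enters.

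Knowing there is no $y_0$-dependence, I would evaluate at $y_0=0$. This annihilates every term with $n\ge 1$, so $S_{E_j}|_{y_0=0}$ is the sum over $d\in\NE(X)_\Z$ of the $n=0$ contributions. For such $d$ the two extra divisors $\hD_{m+1}=\hD_{m+2}=p_0$ pair to $0$ and give the trivial factor $1$, while $\hD_i$ pairs with $d$ exactly as $D_i$ does. Applying the fibre restriction $\iota^*\colon H^*(E_j)\to H^*(X)$, a graded ring map with $\iota^*\hD_i=D_i$ (so $\iota^*\hD_j=D_j$ since $\hD_j=D_j-p_0$) and $\iota^*p_0=0$, and which commutes with the rational factors in the divisor classes that build the $I$-functions, I obtain $\iota^*\bigl(S_{E_j}|_{y_0=0}\bigr)=S_X$. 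Taking $z^{-1}$-coefficients and using $\iota^*p_a=p_a$, $\iota^*p_0=0$ then yields $\sum_{a=1}^{r}g_a^{(j)}p_a=\sum_{a=1}^{r}g_a\,p_a$, whence $g_a^{(j)}=g_a$ for $a=1,\dots,r$ by independence of $p_1,\dots,p_r$.

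The main obstacle I anticipate is the bookkeeping behind the identity $\iota^*(S_{E_j}|_{y_0=0})=S_X$: one must verify that the twisted divisor $\hD_j=D_j-p_0$ and the two new divisors $p_0$ leave no trace after restriction, and that the three operations---setting $y_0=0$, applying $\iota^*$, and extracting the $z^{-1}$-coefficient---commute, so that the two hypergeometric series match term by term over the fibre classes $d$. The first step is short once the inequality $\langle c_1(X),d\rangle\ge 0$ is in hand, so the real work lies in organizing this comparison cleanly.
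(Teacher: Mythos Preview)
Your proposal is correct and follows essentially the same approach as the paper: both read $\sum_i g_i^{(j)} p_i$ off the $z^{-1}$-coefficient of $I_{E_j}$, establish the identity $\iota^*(I_{E_j}|_{y_0=0})=I_X$ via $\iota^*\hD_i=D_i$ and $\iota^*p_0=0$, and use the homogeneity of $g_i^{(j)}$ together with $\deg y_0=2$ and nefness of $-K_X$ to rule out $y_0$-dependence. The only cosmetic difference is that the paper first matches at $y_0=0$ and then argues independence of $y_0$, whereas you do these in the reverse order; your use of Lemma~\ref{l:split} to decompose $\beta=n[\sigma_0]+d$ makes the degree count a touch more explicit than the paper's.
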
 
\begin{proof} 
The functions $g_i^{(j)}$ appear as the 
coefficients of $z^{-1}$ 
in the expansion of $I_{E_j}$ 
(see \cite[p.145]{Gi-A-98}): 
\begin{equation} 
\label{eq:IEj_mirrormap}
I_{E_j}(y,z) = e^{\sum_{i=0}^r p_i \log y_i/z} 
\left(1 + z^{-1} 
\sum_{i=0}^r g_i^{(j)}(y) p_i +  
O(z^{-2}) \right). 
\end{equation} 
The functions $g_i$ are determined by $I_X$ similarly. 
Using $\iota^* \hD_j = D_j$, we can see that 
\begin{equation} 
\label{eq:IEj_IX} 
\iota^* I_{E_j} \Big |_{y_0=0} = I_X. 
\end{equation} 
Note that the restriction to $y_0=0$ in the left-hand side 
is well defined because $\iota^* p_0 = 0$. 
These facts imply that 
\[
g_i^{(j)}(0,y_1,\dots,y_r) = g_i(y_1,\dots,y_r), \quad 
i=1,\dots, r. 
\]
On the other hand, $\deg y_0 = 2 \lan c_1(E_j), \sigma_0\ran 
= 2$ and all other monomials appearing in $g_i^{(j)}$ 
have nonnegative degree (since $c_1(E_j)$ is nef). 
Thus the homogeneous series $g_i^{(j)}$ 
of degree zero does not depend on $y_0$. 
\end{proof}

\subsubsection{Batyrev relations and elements} 
Let $y_1,\dots,y_r$ be the mirror coordinates of a 
toric manifold $X$ with $-K_X$ nef.  
Set $\Q[y^{\pm}]=\Q[y_1^\pm, \dots , y_r^\pm]$. 
Batyrev's quantum ring is a $\Q[y^\pm]$-algebra 
generated by the variables $w_1,\dots, w_m$ 
corresponding to the toric divisors $D_1,\dots,D_m$   
subject to the following two types of relations: 
\begin{align}
\label{eq:bat-rel}
\begin{split}  
\text{(multiplicative):}& \qquad 
\prod_{j:\lan D_j, d\ran >0} w_j^{\lan D_j, d\ran} 
= y^d \prod_{j:\lan D_j,d\ran <0} 
w_j^{- \lan D_j, d\ran}, \quad d\in H_2(X,\Z); \\ 
\text{(linear):}& \qquad \sum_{j=1}^m c_j w_j =0 \quad 
\text{whenever} \quad 
\sum_{j=1}^m c_j D_j =0, \quad c_j \in \Q. 
\end{split} 
\end{align} 
where $y^d = \prod_{i=1}^r y_i^{\lan p_i, d\ran}$. 
We refer to these relations  
as \textbf{Batyrev relations}. 
By the divisor sequence \eqref{eq:divs}, the 
linear relations can be written in 
the form: 
\begin{equation} 
\label{eq:linearBat}
\sum_{j=1}^m \lan v, b_j \ran w_j = 0, \quad 
v \in M. 
\end{equation}

\begin{remark}\label{re:inv}
Since $X$ is compact, 
there exist positive integers $c_1,\dots,c_m$ 
such that $c_1b_1+ \cdots +c_m b_m =0$. 
Then by the fan sequence \eqref{eq:fans} 
we have $d\in H_2(X,\Z)$ such that 
$\lan D_i, d\ran = c_i >0$. 
This gives a relation $\prod_{i=1}^m w_i^{c_i} = y^d$. 
Therefore the variables $w_i$ are invertible in 
the Batyrev ring. 
\end{remark}

\begin{definition} \label{def:batelem} 
Let $X$ be a toric manifold with $-K_X$ nef. 
We can regard $p_i\in H^2(X)$ as corresponding to the 
logarithmic vector field $q_i (\partial/\partial q_i)$. 
We introduce an element $\tp_i \in H^2(X) \otimes 
\Q\llbracket y_1,\dots,y_r \rrbracket$ which corresponds to 
$y_i (\partial/\partial y_i)$ as  
\[
\tp_i = \sum_{k=1}^r 
\frac{\partial \log q_k}{\partial \log y_i} p_k 
= p_i + O(y). 
\] 
Recall from Equation 
\eqref{eq:div-basis} that $D_j = \sum_{i=1}^r m_{ij} p_i$. 
We define the \textbf{Batyrev element} associated to 
$D_j$ as 
\[
\tD_j=\sum_{i=1}^r m_{ij} \tp_i = D_j + O(y). 
\]
\end{definition}

\begin{prop}\label{p:bat-rel}
The Batyrev elements $\tD_1,\dots,\tD_m$ satisfy both 
the multiplicative and linear Batyrev relations 
\eqref{eq:bat-rel} for $w_j = \tD_j$. 
\end{prop}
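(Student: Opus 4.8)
The linear relations require essentially no work, so I would dispose of them first. Writing $\tD_j=\sum_{i=1}^r m_{ij}\tp_i$ with exactly the same coefficients $m_{ij}$ that express $D_j=\sum_{i=1}^r m_{ij}p_i$, a relation $\sum_j c_j D_j=0$ is, since $p_1,\dots,p_r$ form a basis, equivalent to $\sum_j c_j m_{ij}=0$ for every $i$. Substituting this into $\sum_j c_j\tD_j=\sum_{i=1}^r\big(\sum_j c_j m_{ij}\big)\tp_i$ gives $0$ at once. Thus the substance of the proposition is the multiplicative relations, which involve the quantum product $\bullet$ of $QH(X)$ expressed in the mirror coordinates $y$.

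For the multiplicative relations the plan is to read them off from the GKZ (hypergeometric) differential equations satisfied by $I_X$ and transport them to the quantum product via Givental's theorem. The essential bridge is that, under the mirror map, $z\,y_i\partial_{y_i}$ acts on the $J$-function as quantum multiplication by $\tp_i$: differentiating the quantum differential equation $z\,q_a\partial_{q_a}J_X=p_a\bullet J_X$ by the chain rule and using $\tp_i=\sum_a(\partial\log q_a/\partial\log y_i)\,p_a$ yields $z\,y_i\partial_{y_i}J_X=\tp_i\bullet J_X$, hence $\cD_j J_X=\tD_j\bullet J_X$ for the operator $\cD_j:=z\sum_{i=1}^r m_{ij}\,y_i\partial_{y_i}$ that ``brings down'' $D_j$. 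On the other hand, a direct inspection of the hypergeometric series shows that each component of $I_X$ is annihilated by the GKZ operator
\[
\Box_d=\prod_{j:\langle D_j,d\rangle>0}\;\prod_{\nu=0}^{\langle D_j,d\rangle-1}(\cD_j-\nu z)\;-\;y^d\prod_{j:\langle D_j,d\rangle<0}\;\prod_{\nu=0}^{-\langle D_j,d\rangle-1}(\cD_j-\nu z),
\]
where the $y^d$ exactly matches the shift of the index bounds in the two infinite products. Since $I_X=J_X$ by Theorem \ref{t:mirror}, the same operator annihilates $J_X$.

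It then remains to extract a genuine identity in $QH(X)$ from $\Box_d J_X=0$. The clean way is to pass to the quantum $D$-module: the quantum connection makes $H^*(X)\otimes\Q[[y]][z]$ a module on which $z\,y_i\partial_{y_i}$ acts as the covariant derivative, $J_X$ is a cyclic flat section, and the classical limit $z\to0$ recovers $QH(X)$ with $z\,y_i\partial_{y_i}\mapsto\tp_i\bullet$ and $\cD_j\mapsto\tD_j\bullet$. Reducing $\Box_d J_X=0$ modulo $z$ collapses the $-\nu z$ shifts and should yield
\[
\Big(\prod_{j:\langle D_j,d\rangle>0}\tD_j^{\bullet\langle D_j,d\rangle}-y^d\prod_{j:\langle D_j,d\rangle<0}\tD_j^{\bullet(-\langle D_j,d\rangle)}\Big)\bullet 1=0,
\]
which, since $1$ is the unit, is precisely the multiplicative Batyrev relation for $w_j=\tD_j$.

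The main obstacle is exactly this last passage. One cannot simply substitute $\cD_j\mapsto\tD_j\bullet$ inside $\Box_d$, because $\cD_j$ does \emph{not} commute with quantum multiplication: iterating $\cD_j$ produces extra terms involving $y$-derivatives of the quantum structure constants, which are $O(z)$ but not a priori negligible after one multiplies out the $\sum_j|\langle D_j,d\rangle|$ factors. The role of the $-\nu z$ shifts in $\Box_d$ is precisely to cancel these corrections order by order in $z$. Making this rigorous requires exploiting the homogeneity of all objects involved (the grading with $\deg z=2$ and $\deg y^d=2\langle c_1(X),d\rangle$) and identifying the associated graded of the quantum $D$-module with $QH(X)$. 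I would therefore spend most of the effort setting up this graded/classical-limit formalism carefully and verifying that $\operatorname{gr}\Box_d$ is the stated Batyrev relation; the remaining steps are standard once this identification is in place.
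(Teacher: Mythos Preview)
Your approach is essentially the same as the paper's: both dispose of the linear relations directly from the definition, establish the GKZ operator $\Box_d$ (the paper writes $R_d$) annihilating $I_X$, invoke the mirror theorem to transfer this to $J_X$, and then pass to $z=0$ to obtain the multiplicative relation in $QH(X)$.

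The only difference is how the ``obstacle'' is handled. The paper does not re-derive the passage from $\Box_d J_X=0$ to a relation in $QH(X)$; it simply cites the standard fact (from \cite{Gi-Eq96,Gi-A-98,CoKa-Mi99}) that any differential relation $R(q_i,\,zq_i\partial_{q_i},\,z)\,J_X=0$ yields $R(q_i,\,p_i\bullet,\,0)\cdot 1=0$ in small quantum cohomology. So your plan to set up the graded quantum $D$-module formalism is correct but unnecessary---you can cite this. The underlying mechanism, if you want it, uses the full fundamental solution $S$ (with $J_X=S\cdot 1$ and $zq_a\partial_{q_a}S=S\cdot(p_a\bullet)$): iterating gives $P(zq_a\partial_{q_a})\,S=S\cdot\bigl[P(p_a\bullet)+O(z)\bigr]$, and invertibility of $S$ followed by $z\to 0$ kills the correction terms. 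One small misconception worth flagging: the $-\nu z$ shifts in $\Box_d$ are \emph{not} there to cancel the non-commutativity corrections; they are forced by the hypergeometric structure of $I_X$ (the Pochhammer-type factors) and are precisely what makes $\Box_d I_X=0$ hold. The non-commutativity is absorbed entirely by the generic $O(z)$ bookkeeping above, which applies to any operator annihilating $J_X$.
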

\begin{proof} 
We use the fact \cite{Gi-Eq96, Gi-A-98,CoKa-Mi99} 
that if the J-function satisfies the differential 
equation 
$R(q_i, zq_i (\partial/\partial q_i),z) J_X(q,z)=0$, 
then we have a relation 
$R(q_i, p_i \bullet,0) 1 =0$ in the small quantum 
cohomology ring. 
We can easily show that the I-function of $X$ 
satisfy the differential equation $R_d I_X(y,z) =0$ 
for $d\in H_2(X,\Z)$ where 
\[
R_d = \prod_{j: \lan D_j,d\ran>0} 
\prod_{k=0}^{\lan D_j, d\ran -1} 
\left(\cD_j - k z\right) 
- y^d \prod_{j: \lan D_j, d \ran<0} 
\prod_{k=0}^{-\lan D_j, d\ran -1} 
\left(\cD_j - k z \right) 
\] 
and  
$\cD_j = z \sum_{i=1}^r m_{ij} y_i (\partial/\partial y_i)$. 
From the mirror theorem, we know that the J-function 
satisfies the corresponding differential equation 
under the mirror change of coordinates. 
The multiplicative Batyrev relations follow from this 
and the fact above. 
It is obvious that $\tD_j$'s satisfy the linear relations. 
\end{proof} 

\begin{remark}
Seidel elements satisfy the multiplicative
relations \eqref{eq:Batyrev_relation} in the $q$-coordinates, 
as proved in \cite[Proposition 5.2]{McTo-To06}, 
but do not necessarily satisfy the linear relations. 
\end{remark}

\subsection{Seidel elements in terms of mirror maps.}
In this paragraph we will assume that $-K_X$ is nef. 
Since $-K_{E_j}$ is also nef, 
we can expand the I-function of $E_j$ in $z^{-1}$ 
as follows (cf. \eqref{eq:IEj_mirrormap}). 
\begin{equation}
\label{eq:I-y}
I_{E_j}(y,z)= e^{\sum_{i=0}^r p_i \log y_i/z}
\left
    (1+ z^{-1}\sum_{i=0}^rg_i^{(j)}(y) p_i +
     z^{-2} \sum_{n=0}^2 G_n^{(j)}(y)y_0^n + 
 O(z^{-3}) 
\right)
\end{equation}
where $g_i^{(j)}(y)$ is the mirror map of $E_j$ 
in Equation \eqref{eq:mirror}. 
The coefficients $G_0^{(j)},G_1^{(j)},G_2^{(j)}$ 
are power series in $y_1,\dots,y_r$ 
taking values in 
$H^4({E_j}),H^2({E_j}),H^0({E_j})$ respectively. 
Under the coordinate change 
$\log q_i = \log y_i + g_i^{(j)}(y)$, we can 
rewrite $I_{E_j}(y,z)$ as 
\begin{align*}
e^{\sum_{i=0}^r p_i\log q_i/z} 
\left(
1+ z^{-2}\left(G_0^{(j)} -\frac{1}{2} 
\left(\textstyle\sum_{i=0}^r g_i^{(j)} p_i\right)^2 
+  G_1^{(j)} y_0 +G_2^{(j)} y_0^2 \right) + O(z^{-3}) 
\right). 
\end{align*} 

\begin{lemma}
\label{l:seidelmirror}
The Seidel element $S_j$ associated
to the toric divisor $D_j$ is given by 
\[
S_j(q_0,\dots ,q_r)=\iota^*(G_1^{(j)}(y_1,\dots y_r) y_0),
\]
under the mirror transformation \eqref{eq:mirror} 
for $E_j$. 
\end{lemma}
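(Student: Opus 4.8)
The plan is to combine Proposition~\ref{p:SeidelfromJ}, applied to the bundle $E_j$, with Givental's mirror theorem (Theorem~\ref{t:mirror}) and the $q$-coordinate expansion of $I_{E_j}$ displayed just above the statement. Proposition~\ref{p:SeidelfromJ} identifies the Seidel element as $S_j=\iota^*(F_1 q_0)$, where $F_1$ is the coefficient of $z^{-2}q_0^{1}$ in $J_{E_j}$ after dividing out the prefactor $e^{\sum_{i=0}^r p_i\log q_i/z}$. Since $-K_{E_j}$ is nef (Lemma~\ref{lem:Ej-nef}), the mirror theorem gives $I_{E_j}(y,z)=J_{E_j}(q,z)$ under the change of variables~\eqref{eq:mirror}, so it suffices to read off this single coefficient from the rewriting of $I_{E_j}$ in the $q_i$.

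Concretely, I would extract the coefficient of $q_0^{1}$ from the order $z^{-2}$ term
\[
G_0^{(j)}-\tfrac{1}{2}\Bigl(\textstyle\sum_{i=0}^r g_i^{(j)} p_i\Bigr)^2 + G_1^{(j)} y_0 + G_2^{(j)} y_0^2
\]
of that rewriting. By Lemma~\ref{l:mirrormaps} the mirror map components $g_i^{(j)}$ are independent of $y_0$, and the coefficients $G_0^{(j)},G_1^{(j)},G_2^{(j)}$ depend only on $y_1,\dots,y_r$; in particular $g_0^{(j)}=g_0^{(j)}(y_1,\dots,y_r)$. Hence the mirror map~\eqref{eq:mirror} reads $y_0=q_0\exp(-g_0^{(j)})$ with $\exp(-g_0^{(j)})$ a function of $q_1,\dots,q_r$ alone, so $y_0$ is linear in $q_0$ while $y_1,\dots,y_r$ carry no $q_0$. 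The first term and the squared term then contribute only to $F_0$, the term $G_2^{(j)} y_0^2$ only to $F_2$, and the middle term gives exactly $F_1 q_0 = G_1^{(j)} y_0$. Applying $\iota^*$ yields the claimed formula $S_j=\iota^*(G_1^{(j)} y_0)$.

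The only point that needs care is the $y_0\leftrightarrow q_0$ bookkeeping: one must check that the mirror transformation preserves the grading by powers of $q_0$, so that no other part of the $z^{-2}$ coefficient secretly produces a factor of $q_0$. This is precisely where Lemma~\ref{l:mirrormaps} does the work --- because $g_0^{(j)}$ and all the $g_i^{(j)}$, $G_n^{(j)}$ are free of $y_0$, the substitution $y_0=q_0\exp(-g_0^{(j)})$ is $q_0$-linear and the matching of $q_0^{1}$-coefficients collapses to the single term $G_1^{(j)} y_0$. Note that the identity $F_1 q_0 = G_1^{(j)} y_0$ is an equality of elements, independent of which coordinate system one writes them in, and $\iota^*$ is applied only at the final step; so no further subtlety (for instance about $p_0^2$, which in any case is annihilated by $\iota^*$) intervenes.
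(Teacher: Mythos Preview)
Your proof is correct and follows the same route as the paper: invoke Proposition~\ref{p:SeidelfromJ} to identify $S_j=\iota^*(F_1q_0)$, use the mirror theorem $J_{E_j}=I_{E_j}$, and then read off the $q_0^1 z^{-2}$ coefficient from the rewritten expansion of $I_{E_j}$. The paper's two-sentence argument leaves the $y_0\leftrightarrow q_0$ matching implicit, whereas you spell out via Lemma~\ref{l:mirrormaps} why $y_0$ is linear in $q_0$ and the other terms contribute only to $F_0$ and $F_2$; this is exactly the bookkeeping the paper is tacitly assuming.
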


\begin{proof}
By Proposition \ref{p:SeidelfromJ}, 
the Seidel element 
$\tS_j$ is the coefficient of $q_0/z^2$ in 
$\exp(-\sum_{i=0}^r p_i \log q_i /z) J_{E_j}(q,z)$. 
The result follows from $J_{E_j}(q,z)=I_{E_j}(y,z)$. 
\end{proof}

We now digress to reinterpret $G_1^{(j)}$. 
A straightforward computation shows the 
following lemma.  
\begin{lemma} 
The I-function of $E_j$ 
satisfies the differential equation
\[
z\frac{\partial}{\partial y_0} 
\left(y_0 \frac{\partial}{\partial y_0}
\right)
I_{E_j} = 
\left(
\sum_{i=1}^r m_{ij} \left(y_i \frac{\partial}{\partial
  y_i}\right)-y_0\frac{\partial}{\partial y_0}
\right)  I_{E_j} .
\]
where $(m_{ij})$ is the matrix appearing in Equation 
\eqref{eq:div-basis}. 
\end{lemma}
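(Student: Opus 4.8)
The plan is to verify the equation termwise on the hypergeometric expansion of $I_{E_j}$. Write $I_{E_j} = e^{\sum_{a=0}^r p_a \log y_a/z}\sum_{\beta\in\NE(E)_\Z} C_\beta\, y^\beta$, where $C_\beta = \prod_{i=1}^{m+2} \prod_{k\le 0}(\hD_i+kz)/\prod_{k\le\lan\hD_i,\beta\ran}(\hD_i+kz)$. Setting $\theta_a = y_a\,\partial/\partial y_a$, the basic observation is that $z\theta_a$ acts on a single term by $z\theta_a(C_\beta\, y^\beta e^{\cdots}) = (p_a + z\lan p_a,\beta\ran)C_\beta\, y^\beta e^{\cdots}$, since it multiplies the exponential prefactor by $p_a$ and the monomial $y^\beta$ by $z\lan p_a,\beta\ran$. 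Consequently, for a divisor expanded in the basis as $\hD_i = \sum_a \hat m_{ai}p_a$, the operator $z\sum_a \hat m_{ai}\theta_a$ extracts the factor $(\hD_i + z\lan\hD_i,\beta\ran)$ from the $\beta$-th term. In particular the right-hand operator of the lemma, $\sum_{i=1}^r m_{ij}\theta_i - \theta_0$, is $(1/z)$ times the operator attached to $\hD_j = \sum_i m_{ij}p_i - p_0$.

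Next I would record the pairings of the section class $\sigma_0$ (with $\lan p_0,\sigma_0\ran = 1$ and $\lan p_a,\sigma_0\ran = 0$ for $a\ge 1$) against the divisors in \eqref{eq:divE}: one has $\lan\hD_i,\sigma_0\ran = 0$ for $i\ne j,m+1,m+2$, while $\lan\hD_j,\sigma_0\ran = -1$ and $\lan\hD_{m+1},\sigma_0\ran = \lan\hD_{m+2},\sigma_0\ran = 1$. Multiplying $y^\beta$ by $y_0$ shifts the summation index by $\sigma_0$, and telescoping the hypergeometric factors yields the recursion $C_{\beta+\sigma_0} = (\hD_j + z\lan\hD_j,\beta\ran)\big/(p_0 + z(\lan p_0,\beta\ran+1))^2\cdot C_\beta$: only $i=j$ contributes a numerator factor $(\hD_j + z\lan\hD_j,\beta\ran)$, while $i=m+1,m+2$ each contribute one denominator factor $(p_0 + z(\lan p_0,\beta\ran+1))$ because $\hD_{m+1}=\hD_{m+2}=p_0$.

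Using the elementary operator identity $z\,\partial_{y_0}(y_0\,\partial_{y_0}) = (z/y_0)\theta_0^2$, the claimed equation is equivalent, after multiplying through by $y_0$, to $z\theta_0^2 I_{E_j} = y_0(\sum_i m_{ij}\theta_i - \theta_0)I_{E_j}$. Comparing coefficients of a fixed monomial $y^\gamma e^{\cdots}$: the left side gives $(1/z)(p_0 + z\lan p_0,\gamma\ran)^2 C_\gamma$, and the right side gives $(1/z)(\hD_j + z\lan\hD_j,\gamma-\sigma_0\ran)C_{\gamma-\sigma_0}$ whenever $\gamma-\sigma_0\in\NE(E)_\Z$. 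Substituting $\beta = \gamma-\sigma_0$ into the recursion above shows these coincide, so the two series agree away from the boundary of the summation range.

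The one point requiring care — and the only non-formal input — is the boundary case $\gamma\in\NE(E)_\Z$ with $\gamma-\sigma_0\notin\NE(E)_\Z$, where the right side has no term. By Lemma \ref{l:split} this happens precisely when $\gamma = d\in\NE(X)_\Z$ is a fibre class, so that $\lan p_0,\gamma\ran = 0$ and the left-hand coefficient reduces to $(1/z)\,p_0^2\, C_\gamma$. Here I would use that $p_0$ is pulled back from the base $\P^1$ (equivalently $\hD_{m+1}$ and $\hD_{m+2}$ are disjoint fibres of $E_j\to\P^1$), whence $p_0^2 = 0$ in $H^*(E_j)$ and the boundary contribution vanishes. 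This establishes $z\theta_0^2 I_{E_j} = y_0(\sum_i m_{ij}\theta_i - \theta_0)I_{E_j}$; dividing by $y_0$ and applying the operator identity gives the stated differential equation.
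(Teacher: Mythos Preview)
Your proof is correct and supplies exactly the ``straightforward computation'' that the paper omits. What you have written is the standard verification that the $I$-function satisfies the GKZ/box differential equation attached to the class $\sigma_0\in H_2(E_j,\Z)$: indeed, with $\lan\hD_{m+1},\sigma_0\ran=\lan\hD_{m+2},\sigma_0\ran=1$ and $\lan\hD_j,\sigma_0\ran=-1$, the box operator $R_{\sigma_0}$ for $E_j$ reads $(z\theta_0)^2 - y_0\cdot z(\sum_i m_{ij}\theta_i-\theta_0)$, and dividing $R_{\sigma_0}I_{E_j}=0$ through by $z y_0$ gives the lemma. Your termwise recursion for $C_\beta$ and the treatment of the boundary case via $p_0^2=0$ (which the paper itself uses a few lines later) are both correct; the appeal to Lemma~\ref{l:split} to identify the boundary range $\lan p_0,\gamma\ran=0$ is the right bookkeeping.
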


Using the expansion for $I_{E_j}$ in
Equation \eqref{eq:I-y} we obtain
\begin{align}
\notag 
&z\frac{\partial}{\partial y_0} 
\left(y_0\frac{\partial}{\partial y_0} \right) I_{E_j} 
=\notag \\
& = \frac{\partial}{\partial y_0} 
\left( 
e^{\sum_{i=0}^r p_i \log y_i/z} 
\left( 
p_0 + z^{-1} \left( 
\sum_{i=0}^r g_i^{(j)} p_i p_0 + 
\sum_{n=0}^2 G_n^{(j)} n y^n_0 \right) 
+    O(z^{-2}) \right) \right) \notag\\ 
\label{eq:batel1}
&=  
e^{\sum_{i=0}^r p_i \log y_i/z} \left(
    z^{-1} \sum_{n=1}^2 G_n^{(j)} n^2 y^{n-1}_0 +
    O( z^{-2}) \right)
\end{align}
where we used $p_0^2 =0$. 
On the other hand 
\begin{align} 
\nonumber 
\iota^* &\left(\sum_{i=1}^r m_{ij} 
y_i \frac{\partial}{\partial y_i} - 
y_0\frac{\partial}{\partial y_0} \right) I_{E_j} 
= \left(\sum_{i=1}^r m_{ij} y_i 
\frac{\partial}{\partial y_i} - 
y_0 \frac{\partial}{\partial y_0} 
\right) 
\iota^*I_{E_j} \\ 
\nonumber 
& \qquad = 
\left(\sum_{i=1}^r 
m_{ij} y_i \frac{\partial}{\partial y_i} \right) 
\left(I_X+ O(y_0)\right) 
\quad \text{by Equation \eqref{eq:IEj_IX}}  
\\ 
\nonumber 
& \qquad =  
\left(\sum_{i=1}^r m_{ij} y_i 
\frac{\partial}{\partial y_i} \right) 
\left(J_X+ O(q_0)\right) 
\qquad 
\text{by mirror Theorem \ref{t:mirror}} 
\\ 
\nonumber 
& \qquad = 
\left( 
\sum_{i=1}^r  \sum_{k=1}^r m_{ij}
\frac{\partial \log q_k} 
{\partial \log y_i} 
\frac{\partial}{\partial \log q_k} 
\right) 
e^{\sum_{i=1}^r p_i \log q_i/z} 
\left( 1 + O(z^{-2}) + O(q_0)\right) \\
\label{eq:batel}
& \qquad =e^{\sum_{i=1}^r p_i\log q_i/z} 
\left(z^{-1} \tD_j +O(z^{-2})+ O(q_0) \right)
\end{align}
where $\tD_j$ is the Batyrev element of $D_j$. 
Here $y_0,\dots,y_r$ and $q_0,\dots q_r$ are 
related by the mirror map \eqref{eq:mirror} 
for $E_j$, but we know by 
Lemma \ref{l:mirrormaps} that 
the mirror maps of $X$ and $E_j$ coincide 
for $q_1,\dots,q_r$. 
Comparing \eqref{eq:batel} and \eqref{eq:batel1} 
we get the following result.

\begin{lemma}\label{l:aux2}
  $\tD_j(y)=\iota^*G_1^{(j)}(y_1,\dots,y_r)$.  
\end{lemma}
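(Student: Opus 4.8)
The plan is to feed the differential equation of the previous lemma through $\iota^*$ and compare the two resulting expansions, which are precisely \eqref{eq:batel1} (the output of the left-hand operator) and \eqref{eq:batel} (the $\iota^*$-image of the right-hand operator applied to $I_{E_j}$). Since the two operators produce the same function when applied to $I_{E_j}$, so do their $\iota^*$-images; hence $\iota^*$ of \eqref{eq:batel1} equals \eqref{eq:batel}, and the lemma will drop out by matching the coefficient of $z^{-1}$.

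First I would apply $\iota^*$ to the already-computed expression \eqref{eq:batel1}. Using $\iota^* p_0 = 0$ and $\iota^* p_i = p_i$ for $i \ge 1$, the prefactor $e^{\sum_{i=0}^r p_i \log y_i/z}$ collapses to $e^{\sum_{i=1}^r p_i \log y_i/z}$, giving
\[
\iota^*\!\left(z\frac{\partial}{\partial y_0}\Big(y_0\frac{\partial}{\partial y_0}\Big) I_{E_j}\right)
= e^{\sum_{i=1}^r p_i \log y_i/z}\Big(z^{-1}\big(\iota^* G_1^{(j)} + 4\,\iota^* G_2^{(j)}\,y_0\big) + O(z^{-2})\Big),
\]
where I keep only the $z^{-1}$ term since that is all the comparison needs. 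Equation \eqref{eq:batel} expresses the same quantity as $e^{\sum_{i=1}^r p_i \log q_i/z}\big(z^{-1}\tD_j + O(z^{-2}) + O(q_0)\big)$.

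Next I would reconcile the two prefactors, which live in the $y$- and $q$-coordinates respectively. By the mirror map \eqref{eq:mirror} for $E_j$ together with Lemma \ref{l:mirrormaps}, one has $\log y_i - \log q_i = -g_i^{(j)}(y) = -g_i(y_1,\dots,y_r)$ for $i = 1,\dots,r$, so that $e^{\sum_{i=1}^r p_i \log y_i/z} = e^{\sum_{i=1}^r p_i \log q_i/z}\,e^{-\sum_{i=1}^r p_i g_i/z}$. Dividing the equality of the two expansions by the common factor $e^{\sum_{i=1}^r p_i \log q_i/z}$ leaves
\[
e^{-\sum_{i=1}^r p_i g_i/z}\Big(z^{-1}\big(\iota^* G_1^{(j)} + 4\,\iota^* G_2^{(j)}\,y_0\big) + O(z^{-2})\Big)
= z^{-1}\tD_j + O(z^{-2}) + O(q_0).
\]
Since $g_i(0)=0$, the factor $e^{-\sum_{i=1}^r p_i g_i/z} = 1 + O(z^{-1})$ multiplies a series that begins at order $z^{-1}$, so it does not alter the $z^{-1}$-coefficient.

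Finally I would restrict to $y_0 = 0$, which by the mirror map for $q_0$ is the same as $q_0 = 0$; this kills the $\iota^* G_2^{(j)} y_0$ term on the left and the $O(q_0)$ term on the right. Reading off the coefficient of $z^{-1}$ then yields $\iota^* G_1^{(j)} = \tD_j$, and since both $G_1^{(j)}$ and $\tD_j$ are already independent of $y_0$, this is exactly the asserted identity $\tD_j(y) = \iota^* G_1^{(j)}(y_1,\dots,y_r)$. The one point demanding care — and the step I would write out most carefully — is the bookkeeping of the two distinct exponential prefactors: one must check that the passage between the $y$- and $q$-coordinates enters only at order $z^{-2}$, so that the $z^{-1}$-level comparison is untouched by the mirror-map correction.
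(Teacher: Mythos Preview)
Your proposal is correct and follows exactly the approach the paper intends: the paper's entire proof is the single sentence ``Comparing \eqref{eq:batel} and \eqref{eq:batel1} we get the following result,'' and you have spelled out precisely that comparison. Your careful treatment of the prefactor discrepancy between $e^{\sum p_i\log y_i/z}$ and $e^{\sum p_i\log q_i/z}$ (noting the correction is $1+O(z^{-1})$ and hence invisible at the $z^{-1}$ level) is left implicit in the paper here but is made explicit later in the proof of Lemma~\ref{l:tDj}, so you are on solid ground.
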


Noting $S_j = q_0 \tS_j$ and 
the effect of the mirror map 
$\log q_0 = \log y_0 + g_0^{(j)}(y)$, 
we obtain the desired expression of 
Seidel elements from 
Lemma \ref{l:aux2} and Lemma \ref{l:seidelmirror}. 

\begin{theorem}
\label{th:batsei}
The Seidel element $\tS_j$ and Batyrev element $\tD_j$ are
related by 
\[ 
\tS_j(q_1,\dots, q_r)= 
\exp\left(-g_0^{(j)}(y_1,\dots,y_r)\right) 
\tD_j(y_1,\dots, y_r)
\] 
under the mirror transformation \eqref{eq:mirror_X} 
of $X$.  
\end{theorem}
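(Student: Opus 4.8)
The plan is to read off the result by combining the two immediately preceding lemmas with the factorization $S_j = q_0 \tS_j$ and the $y_0$-component of the mirror map for $E_j$. Essentially all the analytic work has already been done in Lemmas \ref{l:seidelmirror} and \ref{l:aux2}, so what remains is a coordinate bookkeeping.

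First I would start from Lemma \ref{l:seidelmirror}, which gives $S_j = \iota^*\bigl(G_1^{(j)}(y_1,\dots,y_r)\, y_0\bigr)$ after passing to the mirror coordinates of $E_j$. Since $y_0$ is a scalar Novikov variable while $\iota^*$ acts only on the $H^*(E_j)$-factor, this factors as $S_j = y_0\, \iota^* G_1^{(j)}(y_1,\dots,y_r)$. Next I would invoke Lemma \ref{l:aux2}, which identifies $\iota^* G_1^{(j)}(y_1,\dots,y_r)$ with the Batyrev element $\tD_j(y_1,\dots,y_r)$, yielding $S_j = y_0\, \tD_j(y_1,\dots,y_r)$.

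Finally I would use the factorization $S_j = q_0 \tS_j$ from Definition \ref{def:seidel} together with the $i=0$ component of the mirror transformation \eqref{eq:mirror} for $E_j$, namely $q_0 = y_0 \exp\bigl(g_0^{(j)}(y)\bigr)$, where $g_0^{(j)}$ depends only on $y_1,\dots,y_r$ by Lemma \ref{l:mirrormaps}. Equating $q_0 \tS_j = y_0 \tD_j$ and cancelling the scalar $y_0$ gives $\exp(g_0^{(j)})\,\tS_j = \tD_j$, which is the claimed identity after moving the exponential to the other side. The one point requiring care---and the only real obstacle---is the coordinate convention: $\tS_j$ is intrinsically a series in $q_1,\dots,q_r$, while the right-hand side is written in $y_1,\dots,y_r$, and these are identified through the mirror map $\log q_i = \log y_i + g_i(y)$ of $X$, which agrees with the $i\ge 1$ part of the mirror map of $E_j$ by Lemma \ref{l:mirrormaps}. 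I would verify that all $y_0$-dependence cancels cleanly so that both sides genuinely descend to functions of $y_1,\dots,y_r$ alone.
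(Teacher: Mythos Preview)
Your proposal is correct and follows essentially the same approach as the paper: the paper's own argument is simply to combine Lemma \ref{l:seidelmirror} and Lemma \ref{l:aux2} with the factorization $S_j = q_0 \tS_j$ and the $i=0$ component $\log q_0 = \log y_0 + g_0^{(j)}(y)$ of the mirror map for $E_j$, exactly as you outline. Your added remark about coordinate conventions (invoking Lemma \ref{l:mirrormaps} to reconcile the $q$- and $y$-variables) is a helpful clarification but not a departure from the paper's reasoning.
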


\begin{remark}
The theorem above and Remark \ref{re:inv} show the invertibility
of the Seidel elements, obtaining the result of Seidel
in our particular toric setting. 
\end{remark}





We shall calculate $g_0^{(j)}$ and $\tD_j$ 
as explicit hypergeometric series in the 
mirror coordinates $y$. 
It is not hard to see the following lemma. 

\begin{lemma} 
\label{lem:productfactor-asymptotics}
About the product factors appearing 
in the I-function $I_{E_j}$ \eqref{eq:IE}, 
we have  
\begin{align*} 
\prod_{i=1}^{m+2} 
\frac{\prod_{k=-\infty}^0 (\hD_i+kz)}
{ \prod_{k=-\infty}^{\langle \hD_i, \beta \rangle}(\hD_i+kz)} 
= C_\beta 
z^{-\sum_{i=1}^{m+2} \langle \hD_i,\beta\rangle - 
\sharp \{i:\langle \hD_i,\beta\rangle <0 \} } 
\prod_{i:\langle \hD_i,\beta\rangle <0} \hD_i 
& + \text{h.o.t.} 
\end{align*} 
where h.o.t.\ means higher order terms in $z^{-1}$ and 
\begin{equation} 
\label{eq:cst-asympt} 
C_\beta = 
\prod_{i:\langle \hD_i,\beta\rangle <0} 
(-1)^{-\langle \hD_i,\beta\rangle -1}(-\langle \hD_i,\beta\rangle -1)!
\cdot 
\prod_{i:\langle \hD_i,\beta \rangle \ge 0} (\langle \hD_i,\beta\rangle!)^{-1}. 
\end{equation} 
\end{lemma}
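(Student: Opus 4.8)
The plan is to establish the asymptotics one factor at a time, since the displayed product runs over $i=1,\dots,m+2$ and the claimed right-hand side is likewise multiplicative: the power of $z$ is a sum over $i$, the constant $C_\beta$ is a product over $i$, and the cohomology factor collects one $\hD_i$ for each $i$ with $\langle\hD_i,\beta\rangle<0$. Writing $a_i:=\langle\hD_i,\beta\rangle$, I would first carry out the cancellation between the two infinite products. Only finitely many factors survive, and the outcome depends only on the sign of $a_i$: when $a_i=0$ the factor is $1$; when $a_i>0$ it reduces to $\bigl(\prod_{k=1}^{a_i}(\hD_i+kz)\bigr)^{-1}$; and when $a_i<0$ it reduces to the finite product $\prod_{k=a_i+1}^{0}(\hD_i+kz)$, which has $-a_i$ factors.

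Next I would extract the top-order term in the $z^{-1}$-expansion of each surviving factor, using that $\hD_i$ is nilpotent so that every such expansion is a finite Laurent polynomial in $z^{-1}$. For $a_i>0$ I would write $\hD_i+kz=kz\bigl(1+\hD_i/(kz)\bigr)$ for each $k\ge 1$, so the reciprocal has leading term $(a_i!)^{-1}z^{-a_i}$, all corrections being of strictly lower order in $z$; this contributes the constant $(a_i!)^{-1}$, the power $z^{-a_i}$, and no cohomology class. For $a_i<0$ the decisive point is the $k=0$ factor, which equals exactly $\hD_i$ (no $z$), while the remaining $-a_i-1$ factors $k=a_i+1,\dots,-1$ each have leading term $kz$. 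Hence the top-order term of this factor is $\hD_i\cdot\bigl(\prod_{k=a_i+1}^{-1}k\bigr)z^{-a_i-1}$, and I would evaluate $\prod_{k=a_i+1}^{-1}k=(-1)^{-a_i-1}(-a_i-1)!$ by separating the sign of the $-a_i-1$ negative integers from their absolute values.

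Finally I would multiply the top-order contributions over all $i$. The powers of $z$ add up to $\sum_{a_i>0}(-a_i)+\sum_{a_i<0}(-a_i-1)=-\sum_{i=1}^{m+2}a_i-\sharp\{i:a_i<0\}$, matching the claimed exponent; the scalar prefactors multiply to the stated $C_\beta$, the $a_i=0$ factors contributing harmlessly since $(0!)^{-1}=1$; and the cohomology classes assemble into $\prod_{i:a_i<0}\hD_i$. The main thing to get right is the bookkeeping in the $a_i<0$ case: both the extra power $-1$ in the $z$-exponent and the appearance of $\hD_i$ come from isolating the $k=0$ factor, and the sign $(-1)^{-a_i-1}$ must be tracked carefully. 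One should also note that all discarded terms are genuinely higher order, which is immediate once each factor is written as (leading term)$\,\times\,\bigl(1+O(z^{-1})\bigr)$ and the product is expanded.
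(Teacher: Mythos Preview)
Your proposal is correct. The paper does not supply a proof of this lemma at all, stating only that ``it is not hard to see''; your factor-by-factor analysis, separating the cases $a_i>0$, $a_i=0$, $a_i<0$ and isolating the $k=0$ term in the last case, is exactly the routine computation the authors leave to the reader.
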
 

\begin{lemma}
\label{l:g0} 
The coefficient $g_0^{(j)}$ is given by
\begin{equation}
\label{eq:g0}
g_0^{(j)}(y_1,\dots,y_r) 
=\sum_{\substack{\lan c_1(X), d\ran =0\\
        \lan D_j, d\ran<0\\
        \lan D_i, d\ran \geq
        0,\, \forall i\neq j}}
    \frac{(-1)^{\lan D_j, d\ran } 
     \left(- \lan D_j, d\ran -1\right)!}
{\prod_{i\neq j}\lan D_i, d\ran!}y^d.
\end{equation}
\end{lemma}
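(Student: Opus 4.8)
The plan is to read off $g_0^{(j)}$ as the coefficient of $p_0$ in the $z^{-1}$-term of the I-function of $E_j$. By the expansion \eqref{eq:I-y} (equivalently \eqref{eq:IEj_mirrormap}), after dividing $I_{E_j}$ by its prefactor $e^{\sum_{i=0}^r p_i\log y_i/z}$ the coefficient of $z^{-1}$ is exactly $\sum_{i=0}^r g_i^{(j)}p_i\in H^2(E_j)$. So I must extract the $z^{-1}H^2$ part of $\sum_{\beta}P_\beta(z)\,y^\beta$, where $P_\beta(z)$ is the product factor in \eqref{eq:IE}, and then pick out its $p_0$-component in the basis $p_0,\dots,p_r$.

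First I would carry out the grading bookkeeping. Assigning degree $2$ to $z$ and the usual degree to cohomology, each factor $\hD_i+kz$ is homogeneous of degree $2$, so $P_\beta(z)$ is homogeneous of degree $-2\langle c_1(E_j),\beta\rangle$. Hence any monomial $z^{-n}\gamma$ occurring in $P_\beta(z)$ with $\gamma\in H^{2m}(E_j)$ obeys $n=m+\langle c_1(E_j),\beta\rangle$, and landing in $z^{-1}H^2$ forces $n=m=1$, so $\langle c_1(E_j),\beta\rangle=0$. Moreover, expanding a factor in powers of $z^{-1}$ only lowers the power of $z$ while raising cohomological degree, so the $z^{-1}H^2$ contribution can come only from the leading asymptotic term of Lemma \ref{lem:productfactor-asymptotics}; this requires $\sharp\{i:\langle\hD_i,\beta\rangle<0\}=1$, in which case that leading term is precisely $C_\beta\,\hD_{i_0}$ with $i_0$ the unique index of negative pairing. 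The step I expect to require the most care is exactly this isolation: one must rule out that a subleading term of a configuration with two or more negative pairings, or any term from $\langle c_1(E_j),\beta\rangle\neq 0$, contaminates the coefficient of $z^{-1}p_0$; the identity $n=m+\langle c_1(E_j),\beta\rangle$ together with the one-directional nature of the $z^{-1}$-expansion is what makes this clean.

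Next I would cut the relevant $\beta$ down using effectivity and the nef hypothesis. Writing $\beta=(d,n)$ in the splitting \eqref{eq:splitE}, $H_2(E_j)=H_2(X)\oplus\Z[\sigma_0]$, Lemma \ref{l:split} gives $n\ge 0$, while $-K_X$ nef gives $\langle c_1(X),d\rangle\ge 0$; since $\langle c_1(E_j),\beta\rangle=\langle c_1(X),d\rangle+n=0$, both vanish, so $n=0$ and $\langle c_1(X),d\rangle=0$. In particular $y^\beta=y^d$, matching the $y_0$-independence recorded in Lemma \ref{l:mirrormaps}. With $n=0$, the weights \eqref{eq:divE} give $\langle\hD_i,\beta\rangle=\langle D_i,d\rangle$ for $1\le i\le m$ (including $i=j$, since the $-p_0$ in $\hD_j$ pairs with $n=0$) and $\langle\hD_{m+1},\beta\rangle=\langle\hD_{m+2},\beta\rangle=n=0$.

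Finally I would extract the $p_0$-component and assemble the series. The unique negative index $i_0$ thus lies in $\{1,\dots,m\}$, and among the $\hD_i$ only $\hD_j=D_j-p_0$ has a nonzero $p_0$-component (namely $-1$), whereas $\hD_i=D_i$ for $i\neq j$ has none. Hence only $\beta=(d,0)$ with $i_0=j$ survive, i.e.\ $\langle D_j,d\rangle<0$ and $\langle D_i,d\rangle\ge 0$ for all $i\neq j$, together with $\langle c_1(X),d\rangle=0$: exactly the index set of \eqref{eq:g0}. Substituting $C_\beta$ from \eqref{eq:cst-asympt} — whose single negative factor contributes $(-1)^{-\langle D_j,d\rangle-1}(-\langle D_j,d\rangle-1)!$ and whose nonnegative factors contribute $\prod_{i\neq j}(\langle D_i,d\rangle!)^{-1}$, the $m+1,m+2$ factors being $0!^{-1}=1$ — and multiplying by the $p_0$-component $-1$ of $\hD_j$, the two signs combine to $(-1)^{\langle D_j,d\rangle}$, which gives \eqref{eq:g0}.
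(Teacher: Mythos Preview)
Your proof is correct and follows essentially the same route as the paper: extract the $z^{-1}$-coefficient of $I_{E_j}$ via Lemma~\ref{lem:productfactor-asymptotics}, use nefness of $-K_{E_j}$ to force $\beta$ to be a fibre class with $\langle c_1(X),d\rangle=0$ and exactly one negative pairing, and then pick out the $p_0$-component using $\hD_j=D_j-p_0$. The only stylistic difference is that you invoke homogeneity of $P_\beta(z)$ to get $\langle c_1(E_j),\beta\rangle=0$ directly, whereas the paper reaches the same constraint by a short case split on $\sum_i\langle\hD_i,\beta\rangle+\sharp\{i:\langle\hD_i,\beta\rangle<0\}\le 1$; one small point you leave implicit (and the paper treats as its first bullet) is that the case $\sharp\{i:\langle\hD_i,\beta\rangle<0\}=0$ with $\sum_i\langle\hD_i,\beta\rangle=0$ forces $\beta=0$, so no subleading term can sneak into $z^{-1}H^2$.
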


\begin{proof}
We want to investigate the coefficient of $z^{-1}$ in the
power series expansion \eqref{eq:IE} of $I_{E_j}$. 
%
By Lemma \ref{lem:productfactor-asymptotics}, 
the summand indexed by $\beta\in \NE(E_j)_\Z$ 
contributes to the coefficient of $z^{-1}$ if 
$\sum_{i=1}^{m+2} \langle \hD_i, \beta\rangle + 
\sharp\{i:\langle \hD_i, \beta\rangle <0\}\le 1$.  
Since $-K_{E_j} = \sum_{i=1}^{m+2} \hD_i$ is nef 
(Lemma \ref{lem:Ej-nef}), 
this happens only in the following three cases: 
\begin{itemize} 
\item $\sum_{i=1}^{m+2} \langle \hD_i,\beta\rangle =0$ and 
$\sharp \{i: \langle \hD_i,\beta\rangle<0\} =0 $; 

\item $\sum_{i=1}^{m+2} \langle \hD_i,\beta\rangle =1$ 
and $\sharp\{i : \langle \hD_i,\beta\rangle<0\} =0$; 

\item $\sum_{i=1}^{m+2} \langle \hD_i,\beta\rangle =0$ 
and $\sharp\{i : \langle \hD_i,\beta\rangle<0\} =1$. 
\end{itemize} 
In the first case, we have $\langle \hD_i,\beta\rangle=0$ for 
all $i$ and so $\beta=0$. This contributes nothing to 
the coefficient of $z^{-1}$. 
The second case does not happen because in this case 
$\beta$ has to satisfy 
$\langle \hD_i,\beta \rangle=0$ except for one $i$, 
and this implies $\beta=0$. 
In the third case, $\beta$ has to be a fibre class 
from $\NE(X)_\Z$ (i.e.\ $\langle p_0, \beta \rangle =0$) 
because $\sum_{i=1}^{m+2} \hD_i = -K_X + p_0$ 
and $-K_X$, $p_0$ are nef. 
Therefore the coefficient of $z^{-1}$ in $I_{E_j}$ 
is the sum of 
\begin{equation} 
\label{eq:coeff-zinv} 
C_d \prod_{i:\langle \hD_i,d\rangle<0} \hD_i, \quad 
\text{where $C_d$ is the constant 
in \eqref{eq:cst-asympt}} 
\end{equation} 
over all the fibre classes 
$d\in \NE(X)_\Z$ such that 
$\sum_{i=1}^{m+2} \langle \hD_i, d \rangle = 
\sum_{i=1}^m \langle D_i, d\rangle =0$ and 
$\langle \hD_i,d\rangle =\langle D_i, d \rangle <0$ for 
exactly one $i$ from $\{1,\dots,m\}$. 
(Note that $\langle D_{m+1}, d\rangle = \langle D_{m+2},d\rangle=0$.)

Now $g_0^{(j)}$ is the coefficient corresponding to $p_0$. 
Among the divisors $\hD_1,\dots,\hD_m$, 
$\hD_j = (D_j,-1)=D_j - p_0$ is the only one which 
contains $p_0$. 
Therefore the terms of the form \eqref{eq:coeff-zinv} which 
contribute to $g_0^{(j)}$ are those with $d\in \NE(X)_\Z$ 
for which $\langle D_j,d\rangle <0$, $\langle D_i,d\rangle \ge 0$ 
for $i\neq j$ and $\sum_{i=1}^m \langle D_i, d\rangle =0$. 
\end{proof}

\begin{lemma}
The Batyrev element $\tD_j$ is given by 
\label{l:tDj}
\[
\tD_j = D_j - \sum_{i=1}^m D_i 
\sum_{
\substack{ \lan c_1(X), d\ran =0 \\
\lan D_i, d\ran <0 \\
\lan D_k, d\ran \ge 0, \, \forall k\neq i }} 
(-1)^{\lan D_i, d\ran}  
\frac{\left(-\lan D_i, d\ran -1\right)!} 
{\prod_{k\neq i, j} \lan D_k, d\ran !} 
y^d.   
\] 
\end{lemma}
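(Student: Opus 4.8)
The plan is to extract the claimed series directly from the explicit $I$-function of $E_j$, exactly as in the proof of Lemma \ref{l:g0}, but reading off a different coefficient. By Lemma \ref{l:aux2} we have $\tD_j = \iota^* G_1^{(j)}$, and $G_1^{(j)}$ is by definition the coefficient of $z^{-2}y_0$ in $e^{-\sum_{i=0}^r p_i\log y_i/z}I_{E_j}$. Since $e^{-\sum p_i\log y_i/z}I_{E_j} = \sum_{\beta\in\NE(E_j)_\Z}(\text{product factor})\,y^\beta$ and the monomials $y^\beta$ are in bijection with $\beta$, isolating $y_0^1$ selects precisely the section classes, which by \eqref{eq:split2} are $\beta=\sigma_0+d$ with $d\in\NE(X)_\Z$. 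Thus $\iota^* G_1^{(j)}=\sum_{d}\big(\text{coeff.\ of }z^{-2}\text{ in }\iota^*(\text{product factor for }\sigma_0+d)\big)y^d$, and the whole problem becomes one of computing these restricted product factors.

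First I would compute $\iota^*$ of the product factor. The two factors attached to $\hD_{m+1}=\hD_{m+2}=p_0$ carry exponent $\langle p_0,\sigma_0+d\rangle=1$, so each equals $1/(p_0+z)$ and, because $\iota^* p_0=0$, restricts to exactly $z^{-1}$; together they contribute a clean $z^{-2}$. For $i\le m$ one has $\iota^*\hD_i=D_i$, and the only subtlety is the $j$-th factor: since $\langle\hD_j,\sigma_0\rangle=-1$ (as $\hD_j=D_j-p_0$ and $\langle p_0,\sigma_0\rangle=1$), its exponent is the shifted value $\langle D_j,d\rangle-1$. After pulling out the $z^{-2}$, extracting the $z^{-2}$-coefficient therefore reduces to taking the $z^0$-coefficient of the horizontal product $Q_d(z)=\prod_{i\ne j}\frac{\prod_{k\le0}(D_i+kz)}{\prod_{k\le\langle D_i,d\rangle}(D_i+kz)}\cdot\frac{\prod_{k\le0}(D_j+kz)}{\prod_{k\le\langle D_j,d\rangle-1}(D_j+kz)}$. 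This shifted exponent on the $j$-th factor is exactly what will make the answer depend on $\langle D_j,d\rangle$ and single out $D_j$.

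The heart of the argument is a degree count parallel to the three-case analysis in Lemma \ref{l:g0}. Each factor of $Q_d$ is homogeneous once we set $\deg z=2$, so $Q_d$ is homogeneous and its $z^0$-coefficient lies in cohomological degree $2\big(1-\langle c_1(X),d\rangle\big)$; since $-K_X$ is nef and $d$ is effective, only $\langle c_1(X),d\rangle\in\{0,1\}$ can contribute. The case $\langle c_1(X),d\rangle=1$ I would discard as before: a nonzero contribution would force all (shifted) exponents to vanish, whence $\langle D_i,d\rangle=\delta_{ij}$, i.e.\ $d$ is the $j$-th standard basis vector of $\Z^m$, which does not lie in $\L$ because $b_j\ne0$ in the fan sequence \eqref{eq:fans}. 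For $\langle c_1(X),d\rangle=0$ the $z^0$-coefficient has degree $2$, and since every factor with a strictly negative exponent contributes at least one divisor class, a degree-$2$ term can only arise when exactly one shifted exponent is negative; taking that single divisor from the corresponding negative factor and the scalar leading term from every other factor, Lemma \ref{lem:productfactor-asymptotics} then produces the coefficient, while $d=0$ yields the leading $D_j$.

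The step I expect to be the main obstacle is the precise bookkeeping around the shifted exponent $\langle D_j,d\rangle-1$ of the $j$-th factor: this is where the special role of $D_j$ enters, and one must track carefully how the power of $z$ and the single surviving divisor combine so that the correct factorial denominators (and the dependence on $\langle D_j,d\rangle$) emerge. As an independent check I would also derive the series from $\tD_j=D_j+\big(\sum_{i=1}^r m_{ij}\,y_i\partial_{y_i}\big)g$, where $g=\sum_k g_k p_k$ is the mirror map of $X$ computed by the same asymptotic method from $I_X$; here the operator $\sum_i m_{ij}\,y_i\partial_{y_i}$ multiplies each $y^d$ by $\langle D_j,d\rangle$, which transparently explains both the appearance of $D_j$ and the $\langle D_j,d\rangle$-weight carried by every term.
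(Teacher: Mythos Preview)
Your proposal is correct, but your primary route differs from the paper's. The paper does not extract $\tD_j$ from $I_{E_j}$ via $\iota^*G_1^{(j)}$; instead it observes (by the calculation leading to \eqref{eq:batel}) that $\tD_j$ appears as the $z^{-1}$-coefficient in $e^{-\sum_i p_i\log y_i/z}\big(\sum_i m_{ij}\,y_i\partial_{y_i}\big)I_X$, and then carries out the same asymptotic product-factor analysis as in Lemma~\ref{l:g0} directly on $I_X$. Because the operator $\sum_i m_{ij}\,y_i\partial_{y_i}$ simply multiplies each $y^d$ by $\langle D_j,d\rangle$, the dependence on $j$ enters as a clean multiplicative weight rather than through the shifted exponent $\langle D_j,d\rangle-1$ on the $j$-th factor; this sidesteps exactly the bookkeeping you correctly identify as the delicate step. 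In fact your ``independent check'' via $\tD_j=D_j+\big(\sum_i m_{ij}\,y_i\partial_{y_i}\big)g$ with $g=\sum_k g_k p_k$ is essentially the paper's actual argument, and is the shorter of the two paths. Your main route has the merit of staying close to the preceding Lemmas~\ref{l:seidelmirror}--\ref{l:aux2} and providing an internal consistency check; both approaches lead to the same hypergeometric series.
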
 
\begin{proof} 
By the same calculation leading to 
Equation \eqref{eq:batel}, we find 
\begin{align*} 
\left( 
\sum_{i=1}^r m_{ij} y_i \frac{\partial}{\partial y_i}\right) I_X(y,z) 
& = e^{\sum_{i=1}^r p_i \log q_i/z} 
\left(z^{-1} \tD_j + O(z^{-2}) \right)  \\ 
& = e^{\sum_{i=1}^r p_i \log y_i/z} 
\left(z^{-1} \tD_j + O(z^{-2}) \right). 
\end{align*} 
The conclusion follows from a calculation 
similar to the previous lemma. 
\end{proof} 

\subsection{Example} 
\label{subsec:BatSeiEx} 
Let $X=\P(\cO_{\P^1}\oplus\cO_{\P^1}(-2))$
be the second Hirzebruch surface. 
Let $p_1,p_2$ be the nef basis of
$H^2(X)$ that are Poincar\'{e} dual to the fibre and 
the infinity section. 
The divisor matrix is
\[
(m_{ij}) = 
\left[
\begin{array}{rrrr} 
0 & -2 & 1 & 1\\
1 & 1 & 0 & 0
\end{array}
\right],
\]
That is 
\begin{equation*}
D_1=p_2, \ D_2=p_2-2p_1, \ D_3=D_4=p_1. 
\end{equation*}
The $q$ coordinates
are
\[
q_1=e^{\PD(D_2)}, \ q_2 = e^{\PD(D_3)}.
\]
The mirror transformation of $X$ is well known 
(cf. \cite[p.146]{Gi-A-98}); it is given by 
\begin{align*}
  y_1&=\frac{q_1}{(1+q_1)^2},\\
  y_2&=q_2(1+q_1).
\end{align*}
%

The classes $\tp_i$, $i=1,2$ corresponding to 
the vector fields $y_i (\partial/\partial y_i)$ are 
\begin{align*}
  \tp_1&= \frac{1+q_1}{1-q_1} p_1- \frac{q_1}{1-q_1} p_2,\\ 
  \tp_2&=p_2.
\end{align*}
The Batyrev elements are 
\begin{align*} 
  \tD_1=\tp_2,\
  \tD_2=\tp_2-2 \tp_1,\
  \tD_3=\tD_4=\tp_1.
\end{align*}
Hence 
\begin{align*}
\tD_1&=D_1, \\
\tD_2&= \frac{1+q_1}{1-q_1} D_2, \\
\tD_3&=D_3- \frac{q_1}{1-q_1} D_2, \\
\tD_4&=D_4- \frac{q_1}{1-q_1} D_2.
\end{align*}
The correction term $g_0^{(j)}$ appears only for $j=2$. 
By Theorem \ref{th:batsei}, the Seidel element associated to 
$D_2$ is just given by
\[
\tS_2(q_1, q_2)=\exp\left(-g_0^{(2)}(y_1,y_2)\right)
\tD_2(y_1, y_2).
\]
where the term $g_0^{(2)}$ 
is the sum over all effective classes $d$ such
that $\lan c_1(X),d\ran=0$, 
$\lan D_j, d\ran <0, \lan D_i, d\ran \geq 0, i\neq j$ as
in Equation \eqref{eq:g0}. These are just the classes
$d=d_1\PD(D_2)$, for $d_1\geq 0$. So we have 
$\lan D_3, d\ran =\lan D_4,d\ran =d_1, 
\lan D_1,d\ran =0$ and $\lan D_2,d\ran =-2d_1$. Thus
\[
g_0(y)= \sum^\infty_{d_1=1} \frac{(2d_1-1)!}{(d_1!)^2} y_1^{d_1}.
\]
(This has already appeared in other places 
e.g.\ \cite[p.393]{CoKa-Mi99} or \cite[p.146]{Gi-A-98}.) 
Using the formulas above, 
it is not hard to see that
\[
\exp(-g_0)=(1+q_1)^{-1},
\]
and thus $\tS_2=(1+q_1)^{-1} \tD_2$. All other
Seidel elements \( \tS_i \) agree with the Batyrev
elements \( \tD_i\).
Therefore we have,
\begin{align*}
  \tS_1&=D_1,\\
  \tS_2&=\left(\frac{1}{1-q_1}\right)D_2,\\
  \tS_3&=D_3-\left( \frac{q_1}{1-q_1}\right)D_2,\\
  \tS_4&=D_4-\left( \frac{q_1}{1-q_1}\right)D_2.
\end{align*}
This computation agrees with the one in McDuff-Tolman
\cite{McTo-To06}. 

It is easy to check that the Batyrev relations are compatible in the
two coordinate systems
\begin{align*}
  \tD_1^{d_2}\tD_2^{d_2-2d_1} \tD_3^{d_1}
  \tD_4^{d_1} &= y_1^{d_1} y_2^{d_2},\\
  \tS_1^{d_2}\tS_2^{d_2-2d_1} \tS_3^{d_1}
  \tS_4^{d_1} &= q_1^{d_1} q_2^{d_2}.
\end{align*}

\section{Reconstruction of mirror maps}
\subsection{When the correction term vanishes}
We continue to assume that $X$ is a toric manifold 
with $-K_X$ nef. 
Let $\Sigma$ denote its fan. 
If the toric divisor $D_j$ is nef, then a direct
computation from Equation \eqref{eq:g0} shows that the correction
coefficient $g_0^{(j)}$ is trivial. 
In such case the Seidel element and
Batyrev element agree. 
However the nef condition is too restrictive. 
We show that $g_0^{(j)}$ vanishes if and only if 
the restriction of $-K_X$ to $D_j$ is big, i.e. 
the image of $D_j$ under the map $\phi_{|-m K_X|}\colon 
X \to \P(H^0(X,-mK_X))$ 
has the same dimension as $D_j$ for sufficiently 
big $m>0$.

\begin{definition}
The \textbf{fan polytope} 
$P\subset N\tensor \R$ of $X$ is the 
convex hull of the integral primitive generators 
$b_1, \dots b_m$ of 1-cones of the fan $\Sigma$. 
\end{definition}

Since $X$ is a compact toric manifold with $-K_X$ nef, we have 
\begin{lemma}\label{l:orig}
The fan polytope $P$ of $X$ contains the origin in its interior and
every vector $b_i$ is on the boundary of $P$.
\end{lemma}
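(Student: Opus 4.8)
The plan is to prove the two assertions separately: that the origin lies in the interior of $P$ is a consequence of completeness of $\Sigma$ alone, whereas the fact that every generator $b_i$ lies on $\partial P$ is where the nef hypothesis on $-K_X$ enters.

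For the first assertion I would argue by contradiction using a supporting-hyperplane argument together with completeness. If the origin were not interior to $P=\mathrm{conv}(b_1,\dots,b_m)$, there would exist a nonzero $v\in M_\R$ with $\langle v,b_i\rangle\le 0$ for all $i$ (a separating functional if $0\notin P$, a supporting one if $0\in\partial P$). On the other hand, completeness of $\Sigma$ means the cones cover $N_\R$, so any $n\in N_\R$ with $\langle v,n\rangle>0$ (which exists since $v\ne 0$) lies in some cone and is therefore a nonnegative combination $n=\sum a_i b_i$ of ray generators; this forces $\langle v,n\rangle=\sum a_i\langle v,b_i\rangle\le 0$, a contradiction. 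Since completeness also guarantees that the $b_i$ span $N_\R$, the polytope $P$ is full dimensional and the origin is genuinely interior.

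For the second assertion I would use the support-function description of the anticanonical divisor. As $X$ is smooth, $-K_X=\sum_i D_i$ is Cartier, and its support function $\varphi$ is the piecewise-linear function on $N_\R$ that is linear on each cone with $\varphi(b_i)=-1$. The nef hypothesis says exactly that $\varphi$ is convex, i.e.\ $\varphi=\min_\sigma\langle m_\sigma,\cdot\rangle$, where $m_\sigma\in M$ is the linear functional agreeing with $\varphi$ on the maximal cone $\sigma$. Fixing $j$ and choosing any maximal cone $\sigma$ containing the ray $\R_{\ge 0}b_j$, convexity yields the two facts $\langle m_\sigma,b_j\rangle=-1$ (because $b_j$ is a ray generator of $\sigma$) and $\langle m_\sigma,b_i\rangle\ge\varphi(b_i)=-1$ for every $i$. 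Thus $b_j$ attains the minimum value $-1$ of the functional $\langle m_\sigma,\cdot\rangle$ over $P$, so the affine hyperplane $\{\langle m_\sigma,\cdot\rangle=-1\}$ supports $P$ at $b_j$; since $\langle m_\sigma,0\rangle=0\ne -1$ and $0$ is interior, this hyperplane meets $P$ only along its boundary, whence $b_j\in\partial P$.

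The routine parts are the two convexity/separation arguments; the one point that requires care is the sign convention in the support function, since it is precisely the direction $\langle m_\sigma,b_i\rangle\ge -1$ delivered by nefness (rather than $\le$) that makes $b_j$ a minimizer and hence a boundary point. I would therefore pin down this convention at the outset, and observe that completeness is used twice—once to know $P$ is full dimensional and once to cover $N_\R$—while nefness is the essential input only for the boundary statement.
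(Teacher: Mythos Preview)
Your argument is correct. Note, however, that the paper does not actually supply a proof of this lemma: it is stated immediately after the definition of the fan polytope with the phrase ``Since $X$ is a compact toric manifold with $-K_X$ nef, we have'' and then left as a standard fact. So there is no proof in the paper to compare against.

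Your two-step approach is the natural one, and your identification of where each hypothesis is used is accurate. Completeness alone gives $0\in\operatorname{int}P$ (and full-dimensionality), while the nef hypothesis on $-K_X$ is genuinely needed for $b_j\in\partial P$: for instance, on the Hirzebruch surface $\F_3$ the ray generator $(0,1)$ lies in the interior of the fan polytope. Your support-function computation is exactly the statement that the Cartier data $m_\sigma$ of $-K_X$ lie in the dual polytope $P^*=\{v:\langle v,b_i\rangle\ge -1\}$ and achieve the value $-1$ on each ray of $\sigma$; this is also how the paper implicitly uses the lemma later, via the polar duality between $P$ and $P^*$ in the proof of Proposition~\ref{p:vanishing}.
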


\begin{proposition} 
\label{p:vanishing} 
The following are equivalent: 
\begin{enumerate} 
\item The correction term 
$g_0^{(j)}$ in \eqref{eq:g0} vanishes;   
\item The primitive generator $b_j$ is 
a vertex of the fan polytope $P$; 
\item The anticanonical divisor 
$-K_X$ is big on $D_j$, i.e.\ $(-K_X)^{n-1}\cdot D_j >0$.  
\end{enumerate}  
\end{proposition}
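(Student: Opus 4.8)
The plan is to establish the cycle of implications via the explicit formula \eqref{eq:g0} for (i)$\Leftrightarrow$(ii) and toric intersection theory for (ii)$\Leftrightarrow$(iii). First I would translate the summation conditions in \eqref{eq:g0} into the lattice $\L=H_2(X,\Z)$. Writing $d=(d^{(1)},\dots,d^{(m)})\in\L\subset\Z^m$ through the fan sequence \eqref{eq:fans}, so that $\sum_i d^{(i)}b_i=0$ and $\langle D_i,d\rangle=d^{(i)}$, the constraints $\langle c_1(X),d\rangle=0$, $\langle D_j,d\rangle<0$, $\langle D_i,d\rangle\ge 0$ ($i\ne j$) become $\sum_i d^{(i)}=0$, $d^{(j)}<0$ and $d^{(i)}\ge 0$ for $i\ne j$. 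Setting $k=-d^{(j)}>0$, these say exactly that $b_j=\sum_{i\ne j}(d^{(i)}/k)b_i$ is a convex combination of the remaining generators; conversely any rational convex combination $b_j=\sum_{i\ne j}\lambda_i b_i$ clears denominators to such a $d$. Since the monomials $y^d$ are linearly independent (the $p_a$ form a basis of $\L^\vee$) and every coefficient in \eqref{eq:g0} is a nonzero rational, $g_0^{(j)}=0$ if and only if no $d\in\NE(X)_\Z$ satisfies the three conditions. By Lemma \ref{l:orig} and $P=\mathrm{conv}(b_1,\dots,b_m)$, the point $b_j$ fails to be a vertex exactly when it lies in $\mathrm{conv}(\{b_i:i\ne j\})$, i.e.\ is such a convex combination. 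Thus (i)$\Leftrightarrow$(ii) will follow once I bridge the gap between ``combinatorial'' solutions $d\in\L$ and ``effective'' ones $d\in\NE(X)_\Z$.

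This bridge is the one genuinely delicate point, and I expect it to be the main obstacle. The inclusion $\NE(X)_\Z\subset\L$ is automatic; for the converse I would show that any $d\in\L$ with $d^{(i)}\ge 0$ for all $i\ne j$ is already effective. Choose a maximal cone $\sigma$ among whose rays is $b_j$ (one exists because $\Sigma$ is complete and $b_j$ spans a ray). For an arbitrary nef class $\eta=\sum_i a_i D_i$, let $m_\sigma\in M$ be the linear piece of its support function on $\sigma$, so that $a_i+\langle m_\sigma,b_i\rangle=0$ whenever $b_i$ is a ray of $\sigma$, while the nef (convexity) condition gives $a_i+\langle m_\sigma,b_i\rangle\ge 0$ for all $i$. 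Using $\sum_i d^{(i)}b_i=0$ to subtract the vanishing term $\langle m_\sigma,\sum_i d^{(i)}b_i\rangle$, I obtain
\[
\langle\eta,d\rangle=\sum_i\bigl(a_i+\langle m_\sigma,b_i\rangle\bigr)d^{(i)}=\sum_{b_i\notin\sigma}\bigl(a_i+\langle m_\sigma,b_i\rangle\bigr)d^{(i)}\ge 0,
\]
because every index $i$ with $b_i\notin\sigma$ has $i\ne j$ and hence $d^{(i)}\ge 0$. As $\eta$ ranges over the nef cone this forces $d\in\NE(X)_\Z$. Combined with the previous paragraph this yields (i)$\Leftrightarrow$(ii): when $b_j$ is not a vertex the resulting $d$ is genuinely effective and contributes a nonzero term to \eqref{eq:g0}, whereas when $b_j$ is a vertex the index set is empty.

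Finally, for (ii)$\Leftrightarrow$(iii) I would pass to the anticanonical polytope. Since $-K_X=\sum_i D_i$ is nef and, by Lemma \ref{l:orig}, $P$ contains the origin in its interior, the moment polytope $\Delta_{-K_X}=\{v\in M\otimes\R:\langle v,b_i\rangle\ge -1\ \forall i\}$ is the full-dimensional compact polar dual $P^\circ$. The divisor $D_j$ corresponds to the face $F_j=\{v\in\Delta_{-K_X}:\langle v,b_j\rangle=-1\}$, and polar duality gives $\dim F_j=n-1$ precisely when $b_j$ is a vertex of $P$, with $\dim F_j<n-1$ otherwise. Then the projection formula together with the standard volume formula for nef toric divisors yields $(-K_X)^{n-1}\cdot D_j=\bigl((-K_X)|_{D_j}\bigr)^{n-1}=(n-1)!\,\mathrm{vol}_{n-1}(F_j)$, which is positive exactly when $F_j$ is $(n-1)$-dimensional, i.e.\ (iii)$\Leftrightarrow$(ii). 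Here the only care needed is to justify the intersection-equals-volume identity for the merely nef (not necessarily ample) divisor $-K_X$, which holds because $(-K_X)|_{D_j}$ is nef on the $(n-1)$-dimensional $D_j$ with moment image $F_j$.
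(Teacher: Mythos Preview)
Your argument is correct, and the overall architecture matches the paper's: translate the index set of \eqref{eq:g0} into rational convex combinations of $b_j$ by the remaining $b_i$, and handle (ii)$\Leftrightarrow$(iii) via polar duality of the anticanonical polytope.

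The one genuine methodological difference is how you establish effectiveness of the relevant classes $d$. The paper routes this through its auxiliary Lemma~\ref{l:aux4}, which takes a supporting hyperplane of the face $F(b_j)$ and uses $\langle c_1(X),d\rangle=0$ to force $\langle D_i,d\rangle=0$ for $b_i\notin F(b_j)$; when $b_j$ is a vertex this face is $\{b_j\}$, so $d=0$. Your ``bridge'' instead proves directly, via the support function of a nef class on a maximal cone through $b_j$, that any $d\in\L$ with $\langle D_i,d\rangle\ge 0$ for $i\ne j$ lies in $\NE(X)$, without invoking $\langle c_1(X),d\rangle=0$. This is slightly stronger and entirely self-contained, and in fact it supplies the effectiveness assertion that Lemma~\ref{l:aux4} states. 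Conversely, the paper's lemma gives extra information (vanishing of $\langle D_i,d\rangle$ outside $F(\sigma)$) that you do not need because your convex-combination characterization already makes the vertex case immediate.

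For (ii)$\Leftrightarrow$(iii) the paper simply cites \cite[Lemma~9.3.9]{CoLi-To10}; you unpack the same content via the volume formula $((-K_X)|_{D_j})^{n-1}=(n-1)!\,\mathrm{vol}_{n-1}(F_j)$ and the face--vertex correspondence under polarity, which is equivalent.
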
 

For the proof, we use the following notation 
and lemma.  

\begin{notation}
  For each cone $\sigma\in \Sigma$ we let $F(\sigma)$ denote the
  minimal face of the fan polytope $P$ which contains the collection
  $\sigma(1)$ of primitive generators $b_i\in \sigma$. By abuse of
  notation we write $F(b_i)$ to denote the face $F(\R_{\geq 0} b_i)$.
\end{notation}
\begin{lemma}\label{l:aux4}
  Let $\sigma$ be a cone in $\Sigma$. Suppose that $d\in H_2(X)$
  satisfies $\lan c_1(X) , d\ran=0 $ and 
  \[\lan D_i, d \ran\geq 0 \quad \text{if} \quad 
  b_i\notin \sigma.\]
  Then $d\in \NE(X)$ and 
  \[
  \lan D_i, d\ran=0 \quad \text{if} \quad b_i \notin F(\sigma).
  \]
\end{lemma}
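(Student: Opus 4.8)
The plan is to translate the hypotheses into the combinatorics of the generators $b_i$ and the fan polytope $P$, and to handle the two conclusions separately. Writing $d=(d_1,\dots,d_m)$ under $\L\subset\Z^m$, the (co)fan sequences \eqref{eq:fans}, \eqref{eq:divs} give $d_i=\lan D_i,d\ran$ together with the single relation $\sum_{i=1}^m d_i b_i=0$ in $N_\R$, while $\lan c_1(X),d\ran=0$ and $c_1(X)=\sum_i D_i$ give $\sum_{i=1}^m d_i=0$. Splitting indices as $I_\pm=\{i:\pm d_i>0\}$ and $I_0=\{i:d_i=0\}$, and setting $s:=\sum_{I_+}d_i=\sum_{I_-}(-d_i)$, these two relations say that the point $v:=\tfrac1s\sum_{I_+}d_i b_i=\tfrac1s\sum_{I_-}(-d_i)b_i$ (when $s>0$; otherwise $d=0$ and the lemma is trivial) lies simultaneously in $\mathrm{conv}\{b_i:i\in I_+\}$ and in $\mathrm{conv}\{b_i:i\in I_-\}$. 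The hypothesis $\lan D_i,d\ran\ge 0$ for $b_i\notin\sigma$ means exactly $I_-\subseteq\sigma(1)$, so $\{b_i:i\in I_-\}\subseteq F(\sigma)$ and, $F(\sigma)$ being convex, $v\in F(\sigma)$.

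For the second conclusion I would invoke the elementary face property: if a point of a face $F$ of $P$ is a convex combination of points of $P$ with strictly positive coefficients, then all of those points lie in $F$. Applying this to $v\in F(\sigma)$ and its expression $v=\sum_{I_+}(d_i/s)b_i$ forces $\{b_i:i\in I_+\}\subseteq F(\sigma)$ as well. Hence $b_i\in F(\sigma)$ whenever $d_i\neq 0$, which is precisely $\lan D_i,d\ran=0$ for $b_i\notin F(\sigma)$.

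For $d\in\NE(X)$ I would use that $\NE(X)$ is dual to the nef cone $\overline{C}_X$, so it suffices to check $\lan p,d\ran\ge 0$ for every nef class $p$. Representing $p$ by a $\T$-invariant divisor $\sum_i a_iD_i$, its support function $\psi_p$ — piecewise linear on $\Sigma$ with $\psi_p(b_i)=a_i$ — is convex precisely because $p$ is nef, and $\lan p,d\ran=\sum_i a_id_i=\sum_i\psi_p(b_i)\,d_i$. Since $I_-\subseteq\sigma(1)$ and $\sigma$ lies in some maximal cone $\sigma_0$ (the fan being complete), every $b_i$ with $i\in I_-$ lies in $\sigma_0$, on which $\psi_p$ agrees with a single linear functional $\lan m,\cdot\ran$, $m\in M\otimes\R$; convexity then gives $\psi_p\ge\lan m,\cdot\ran$ everywhere. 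Hence
\[
\sum_{I_+} d_i\,\psi_p(b_i)\ \ge\ \Big\langle m,\ \sum_{I_+} d_i b_i\Big\rangle=\Big\langle m,\ \sum_{I_-}(-d_i)b_i\Big\rangle=\sum_{I_-}(-d_i)\,\psi_p(b_i),
\]
so that $\lan p,d\ran=\sum_{I_+}d_i\psi_p(b_i)-\sum_{I_-}(-d_i)\psi_p(b_i)\ge 0$, as required.

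The routine inputs are the identity $\lan D_i,d\ran=d_i$ from the (co)fan sequences and the standard equivalence ``$p$ nef $\Leftrightarrow$ support function convex''. The one step deserving care — and which I expect to be the crux — is the use of convexity in the last display: it is essential that \emph{every} index with a negative coefficient sits inside one common maximal cone $\sigma_0$, so that $\psi_p$ is genuinely linear there and the inequality $\psi_p(b_i)\ge\lan m,b_i\ran$ is applied only on the $I_+$ side. This is exactly where the hypothesis $\lan D_i,d\ran\ge 0$ for $b_i\notin\sigma$ enters, and it is what rescues the argument from the naive Jensen estimate, which would merely bound both sums below by $\psi_p(v)$ and fail to compare them.
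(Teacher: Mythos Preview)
Your argument is correct. For the vanishing conclusion, your approach and the paper's are the same idea in different packaging: the paper proves the ``face property'' you invoke by choosing a linear functional $h$ with $P\subset\{h\le 1\}$ and $F(\sigma)=P\cap\{h=1\}$, then running the inequality chain
\[
0=\sum_i d_i\,h(b_i)=\sum_{b_i\in F(\sigma)}d_i+\sum_{b_i\notin F(\sigma)}d_i\,h(b_i)\le \sum_i d_i=0,
\]
which forces $d_i=0$ whenever $h(b_i)<1$. Your convex-combination phrasing is exactly this, quoted as a lemma rather than reproved.

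Where you genuinely add something is the conclusion $d\in\NE(X)$: the paper states it but the written proof never addresses it. Your support-function argument (all negative indices sit in one maximal cone $\sigma_0\supset\sigma$, so the nef support function is linear there and dominates that linear form globally) is the right fix and is correct as written. In the paper's sole application of the lemma (the proof of Proposition~\ref{p:vanishing}) the class $d$ already lies in $\NE(X)$ by hypothesis, so the omission is harmless there, but your proof actually establishes the lemma as stated.
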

\begin{proof}
  Take a support function $h\colon N_\R\to \R$ 
  such that $P$ is contained in
  the half-space $\{v\in N_\R|h(v)\leq 1\}$ and such that $F(\sigma)=P\cap
  h^{-1}(1)$. We have the following relation
  \[
  0=\sum_{i=1}^m \lan D_i, d\ran b_i.
  \]
  By evaluating $h$ we have
  \begin{align*}
    0=\sum_{i=1}^m \lan D_i, d\ran h(b_i) = 
\sum_{b_i\in F(\sigma)}
    \lan D_i, d \ran + 
\sum_{b_i\notin F(\sigma)} \lan D_i, d \ran
    h(b_i) \\ 
    \leq \sum_{b_i\in F(\sigma)} \lan D_i, d \ran +
    \sum_{b_i\notin F(\sigma)} \lan D_i, d \ran=0,
  \end{align*}
where the second inequality follows from the fact that $h(b_i)<1$ and
$\lan D_i,d\ran\geq 0$, if $b_i\notin F(\sigma)$. Because of the
zeroes in each hand-side, the inequality is an equality. Therefore
$\lan D_i,d\ran h(b_i)=\lan D_i, d\ran$, if $b_i\notin
F(\sigma)$. This in turn implies that $\lan D_i, d\ran=0$, if $b_i\notin
F(\sigma)$.
\end{proof}

\begin{proof}[Proof of Proposition \ref{p:vanishing}]
We first prove the equivalence of (ii) and (iii). 
By \cite[Lemma 9.3.9]{CoLi-To10}, the bigness of 
$(-K_X)|_{D_j}$ translates to the fact that 
the face $F_j^*$ of the dual polytope $P^*$ 
has the maximal dimension $n-1$, where 
\begin{align*} 
P^* &= \{ v\in M_\R \,:\, \lan v, b_i \ran \ge -1,\, \forall i \},\\  
F_j^* &= \{ v\in P^*\, :\, \lan v, b_j \ran =-1\}. 
\end{align*} 
This is equivalent to $b_j$ being a vertex of $P$. 

Next we prove the equivalence of (i) and (ii). 
  Suppose $b_j$ is not a vertex of $P$. 
Then $b_j$ is in the relative interior of $F(b_j)$. 
Since $F(b_j)$ is convex, there exist $b_{i_1},
  \dots ,b_{i_k}$ on $F(b_j)$ and nonnegative 
constants $c_1, \dots, c_k$ such
  that such that $b_{i_s}\neq b_j$ for all  $s$ and
\begin{gather}
\label{eq:badrel}     
c_1 b_{i_1} + \dots + c_k b_{i_k} - b_j=0, \\ 
\label{eq:prod}
c_1+\dots +c_k -1=0.
\end{gather}
By the fan sequence \eqref{eq:fans}, 
the relation \eqref{eq:badrel} gives an element in $d\in\NE(X)$ such
  that $\lan D_j,d\ran=-1, \lan D_i,d\ran\geq 0, i\neq j$. 
By Equation \eqref{eq:prod}, $\lan c_1(X),d \ran=0$. 
Such $d$ contributes to the sum in Equation \eqref{eq:g0} 
and $g_0^{(j)}\neq 0$. 


Suppose that $b_j$ is a vertex of $P$ 
and $g_0^{(j)} \neq 0$. 
Then by Equation \eqref{eq:g0} 
there exists $d\in \NE(X)$ such that
$\lan D_j,d\ran<0$, $\lan D_i,d\ran\geq 0, i\neq j$ 
and $\lan c_1(X),d \ran=0$. 
By Lemma \ref{l:aux4}, we
know that $\lan D_i, d\ran=0 $, if $b_i\notin F(b_j)$. 
However, $b_j$ is a vertex of $P$, 
which means that $F(b_j)$ contains only $b_j$. 
Therefore $\lan D_i, d\ran=0 $ for all $i\neq j$. 
Since the divisors $\{D_i\,:\, i\neq j\}$ 
span  $H^2(X,\Q)$ it follows that $d=0$, which contradicts $\lan
  D_i,d\ran<0$. 
\end{proof} 


\begin{corollary}
Let $n$ be the dimension of $X$ 
and $r$ be the Picard number. 
Out of $m=n+r$ correction terms $g_0^{(j)}$, 
at least $n+1$ vanish. 
In other words, there are at most 
$r-1$ non-vanishing correction terms. 
\end{corollary}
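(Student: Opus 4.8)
The plan is to combine the characterization of vanishing from Proposition~\ref{p:vanishing} with the elementary combinatorics of the fan polytope $P$. Since $g_0^{(j)}$ vanishes exactly when $b_j$ is a vertex of $P$, the corollary amounts to a lower bound on the number of the $b_j$ that are vertices of $P$. First I would record the dimension count $m=n+r$: the fan sequence \eqref{eq:fans} reads $0\to\L\to\Z^m\to N\to 0$, and $\rank \L = r$ (the Picard number, since $\L^\vee\cong H^2(X,\Z)$) while $\rank N = n$, so $m=n+r$.

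Next I would invoke Lemma~\ref{l:orig}, which says the origin lies in the \emph{interior} of $P$. This forces $P$ to be full-dimensional in $N_\R\cong\R^n$, i.e.\ an honest $n$-dimensional convex polytope. The key geometric input is then the standard fact that any $n$-dimensional convex polytope in $\R^n$ has at least $n+1$ vertices (the extreme case being a simplex). I would cite or sketch this: a polytope with at most $n$ vertices would be contained in the affine hull of those vertices, a proper affine subspace, contradicting full-dimensionality.

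Since $P=\mathrm{conv}(b_1,\dots,b_m)$, every vertex of $P$ is one of the primitive generators $b_i$, and these generators are pairwise distinct (they generate distinct rays of $\Sigma$). Hence there are at least $n+1$ indices $j$ for which $b_j$ is a vertex of $P$. By the equivalence of (i) and (ii) in Proposition~\ref{p:vanishing}, the corresponding correction terms $g_0^{(j)}$ vanish, giving at least $n+1$ vanishing terms. Subtracting from the total $m=n+r$ leaves at most $(n+r)-(n+1)=r-1$ non-vanishing correction terms, as claimed.

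I do not anticipate a genuine obstacle here: the only substantive ingredient is the lower bound $n+1$ on the number of vertices of a full-dimensional polytope, and the only point requiring a word of care is that vertices of $P$ are automatically drawn from the $b_i$ (so that counting vertices of $P$ really counts vanishing terms $g_0^{(j)}$). Everything else is bookkeeping with the two already-established results.
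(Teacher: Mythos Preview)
Your argument is correct and matches the paper's own proof: the paper simply notes that a convex polytope with nonempty interior in an $n$-dimensional space has at least $n+1$ vertices, and invokes Proposition~\ref{p:vanishing}. You have filled in the surrounding bookkeeping (the count $m=n+r$, the full-dimensionality via Lemma~\ref{l:orig}, and the observation that vertices of $P$ lie among the $b_j$) more explicitly than the paper does, but the route is identical.
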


\begin{proof}
Any convex polyhedron with
non-empty interior in an $n$-dimensional vector space has at least
$n+1$ vertices. The result follows.
\end{proof}

\subsection{Reconstruction}
By Proposition \ref{p:bat-rel} the elements $\tD_j$ satisfy
both the multiplicative and the linear Batyrev relations. However
Seidel elements only satisfy the multiplicative relations. The
reconstruction of the mirror transformation is based on this
observation. We now have all we need to establish 
the proof of Theorem \ref{t:intro3}.


\begin{proof}[Proof of Theorem \ref{t:intro3}] 
By all our statements above, Batyrev elements satisfy 
the conditions (i) -- (iii). 
So we only need to show the uniqueness property. 
The linear relations (iii) are equivalent to the identity
(cf.\ Equation \eqref{eq:linearBat}) 
\[
\sum_{j=1}^m b_j\tensor \tD_j=0
\]
in the tensor product $N_\Q\tensor H^2(X;\Lambda_X)$. 
Substituting $\tD_j$ with $H_j \tS_j$ 
and using the property (ii), we have 
\[
\sum_{b_j: \text{non-vertex}} H_j (b_j \otimes \tS_j)  
= -\sum_{b_j: \text{vertex}} b_j \otimes \tS_j. 
\]
Here we used the fact that $(-K_X)^{n-1}\cdot D_j>0$ 
is equivalent to $b_j$ being a vertex of the fan 
polytope (Proposition \ref{p:vanishing}). 
Since $\tS_j = D_j + O(q)$, to see the uniqueness of $H_j$, 
it suffices to show that 
$\{b_j\otimes D_j\,:\, b_j: \text{non-vertex}\}$
is linearly independent. 
Suppose we have a linear relation 
\begin{equation} 
\label{eq:rel_bjotimesDj}
\sum_{b_j: \text{non-vertex}} h_j (b_j\otimes D_j) =0. 
\end{equation} 
%
%
%
%
%
%
If $b_j$ is not a vertex, the face $F(b_j)$ of 
the fan polytope $P$ contains $b_j$ 
in its relative interior. 
Since $F(b_j)$ is a convex hull 
of vertices $b_{i_1},\dots,b_{i_k}$ of $P$, 
we have a relation 
\[
c_1 b_{i_1} + \dots + c_k b_{i_k} - b_j=0
\]
for some rational numbers $c_s$. 
This relation gives an element $d\in H_2(X)$ such that 
$\lan D_j,d\ran = -1$, $\lan D_{i_s}, d\ran = c_s$ and 
$\lan D_l, d\ran =0$ when $l$ is none of $j, i_1,\dots,i_k$. 
Contracting Equation \eqref{eq:rel_bjotimesDj} 
with $\id \otimes d$, we get 
\[
0=-h_j b_j.   
\] 
Thus $h_j=0$ for all $j$ for which $b_j$ is not a vertex.
This completes the proof. 
\end{proof}

\subsection{Examples} 
We compute the Batyrev and the Seidel elements 
in several examples 
and illustrate the reconstruction of mirror maps. 

\subsubsection{Hirzebruch surface $\F_2$} 
We revisit the example in Section \ref{subsec:BatSeiEx}. 
The fan of $\F_2$ is given by 
the following primitive vectors of 1-cones: 
\[
b_1 = (0,-1),\ b_2 = (0,1), \ b_3 = (-1,1), \ 
b_4 = (1,1). 
\]
Here $b_1,b_3,b_4$ are vertices of the fan polytope 
and thus $\tS_j = \tD_j$ for $j=1,3,4$. 
The mirror coordinates $y_1,y_2$ 
can be reconstructed from these 
Seidel elements, via the formulas 
\[
\tS_1 = \sum_{i=1}^2 
\frac{\partial \log q_i }{\partial \log y_2} p_i, 
\quad 
\tS_3 = \tS_4 = 
\sum_{i=1}^2 
\frac{\partial \log q_i}{\partial \log y_1} p_i. 
\]

\subsubsection{Crepant resolution of $\P^3/\Z^2$}
Let $X=\P(\cO(2,-2)\oplus \cO)$ be 
the $\P^1$-bundle over $\P^1\times \P^1$. 
Collapsing the zero and the infinity section 
to $\P^1$ yields $\P^3/\Z_2$, 
where $\Z_2$ acts on $\P^3$ by
\[
[z_1:z_2:z_3:z_4]\mapsto [z_1:z_2:-z_3:-z_4].
\]
The toric variety 
$\P^3/\Z_2$ has transversal $A_1$ singularities along two 
$\P^1$'s and $X$ is its crepant resolution.

The fan of $X$ is given by 
\begin{gather*}
b_1=(1,0,-1), \ b_2=(-1,0,-1),\  b_3=(0,1,1), \ b_4=(0,-1,1),\\
 b_5=(0,0,1),\  b_6=(0,0,-1).
\end{gather*}
The vertices of the fan polytope are 
$b_1,b_2,b_3,b_4$. 
The Mori cone is spanned by 
the three homology classes 
$\gamma_1,\gamma_2,\gamma_3 
\in H_2(X,\Z)$ with the intersection matrix 
\[
\left(\lan D_i, \gamma_j \ran \right)^{\rm T}=
\begin{pmatrix}
  0&0&1&1&-2&0\\
  1&1&0&0&0&-2\\
  0&0&0&0&1&1
\end{pmatrix} 
\] 
where each row vector gives a relation 
amongst $b_1,\dots,b_6$. 
The classes $\gamma_1,\gamma_2,\gamma_3$ define 
a $\Z$-basis for $H_2(X,\Z)$.  
Let $p_1,p_2,p_3 \in H^2(X,\Z)$ denote 
the dual basis and let 
$q_1,q_2,q_3$ denote the corresponding 
Novikov variables. 

\noindent
\textbf{Mirror coordinates.} 
\begin{align*}
  y_1&=\frac{q_1}{(1+q_1)^2},\\
  y_2&=\frac{q_2}{(1+q_2)^2},\\
  y_3&=q_3(1+q_1)(1+q_2)
\end{align*}
\textbf{The Batyrev and Seidel elements.} 
\begin{align*}
\tS_1 &= \tS_2 = 
\tD_1=\tD_2=\tp_2 
=\frac{1+q_2}{1-q_2}p_2-\frac{q_2}{1-q_2}p_3, \\
\tS_3 &= \tS_4 = 
\tD_3 = \tD_4 = \tp_1 = 
\frac{1+q_1}{1-q_1}p_1-\frac{q_1}{1-q_1}p_3,\\
\tS_5 & = \frac{1}{1+q_1} \tD_5, \quad 
\tD_5=-2\tp_1+\tp_3=\frac{1+q_1}{1-q_1}D_5,\\
\tS_6 & = \frac{1}{1+q_2} \tD_6, \quad 
\tD_6 =-2\tp_2+\tp_3=\frac{1+q_2}{1-q_2}D_6.
\end{align*}
\textbf{Reconstruction from Seidel elements.} 
The reconstruction in this case 
becomes a little more involved. 
Since $\tS_1,\dots,\tS_4$ have no correction terms, 
we have the relations 
\begin{equation} 
\label{eq:3dim_Sei-mirrortrans} 
\tS_1 = \tS_2 = \sum_{i=1}^3 
\frac{\partial \log q_i}{\partial \log y_2} p_i, 
\quad 
\tS_3 = \tS_4 = 
\sum_{i=1}^3 \frac{\partial \log q_i}{\partial \log y_1} 
p_i  
\end{equation} 
but these are not enough to determine the mirror 
coordinates $y_1,y_2,y_3$. 
We also need to require that the mirror map 
is homogeneous, i.e.\ the Euler vector field 
is preserved 
\begin{equation} 
\label{eq:Eulerpreserved}
2 y_3 \frac{\partial}{\partial y_3} 
= 2 q_3 \frac{\partial}{\partial q_3}. 
\end{equation} 
The Equations \eqref{eq:3dim_Sei-mirrortrans}, 
\eqref{eq:Eulerpreserved} can reconstruct 
the mirror coordinates $y_1,y_2,y_3$. 

In general, the method of the reconstruction illustrated 
here works if $c_1(X)$ together with the divisors 
$D_j$ for which $b_j$ is a vertex of the fan polytope 
span $H^2(X)$. 

\subsubsection{Crepant resolution of 
$(\P^1\times \P^1)/\Z_2$}
Let $\Z_2$ act on $\P^1\times \P^1$ by 
\[
([z_1, z_2], [w_1,w_2]) \mapsto 
([z_1,-z_2], [w_1,-w_2]).  
\]
The quotient $(\P^1\times \P^1)/\Z_2$ has 
four isolated singular points of type $A_1$. 
Let $X$ denote the minimal resolution of 
$(\P^1\times \P^1)/\Z_2$. 
It is given by the complete regular fan 
(Figure \eqref{fig:box}) spanned by 
\begin{gather*} 
b_1=(-1,1), \ b_2=(1,1), \ b_3=(1,-1),\  b_4=(-1,-1),\\
b_5=(0,1),\  b_6=(1,0),\ b_7=(0,-1),\ b_8=(-1,0).
\end{gather*} 
The vertices of the fan polytope are $b_1,\dots,b_4$. 

\begin{figure}
\begin{center}
\scalebox{.7}{
  \input{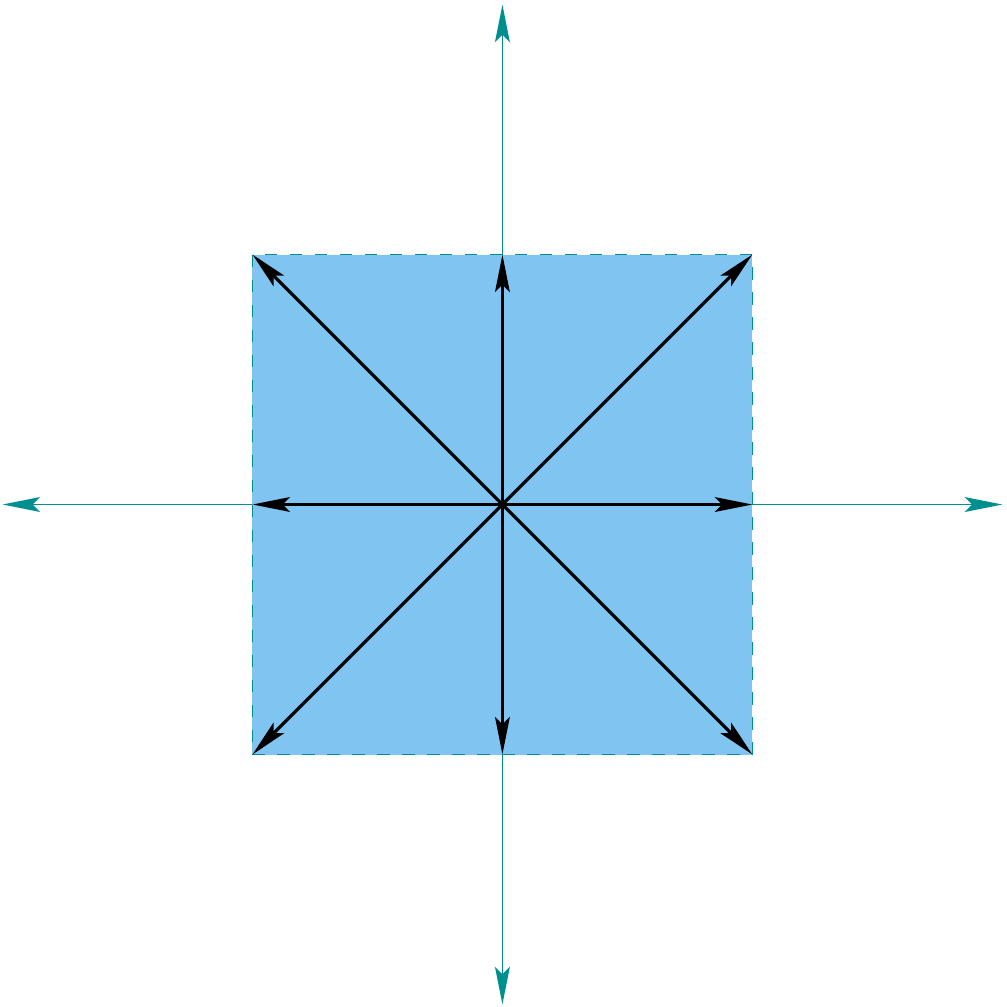_t}
}
\end{center}
\caption{Crepant resolution of $(\P^1\times\P^1)/\Z_2$}
\label{fig:box}
\end{figure}
%
Define $\gamma_1,\dots,\gamma_6\in H_2(X,\Z)$ 
to be the $\Q$-basis of $H_2(X)$ 
with the following intersection matrix: 
\[
\left(\lan D_i, \gamma_j \ran\right)^{\rm T}
= 
\begin{pmatrix} 
1 & 1 & 0 & 0 & -2 & 0 & 0 & 0 \\ 
0 & 1 & 1 & 0 & 0 & -2 & 0 & 0 \\
0 & 0 & 1 & 1 & 0 & 0 & -2 & 0 \\
1 & 0 & 0 & 1 & 0 & 0 & 0 & 2 \\ 
-2 & -1 & -2 & -1 & 2 & 2 & 2 & 2 \\ 
-1 & -2 & -1 & -2 & 2& 2& 2& 2 
\end{pmatrix} 
\]
Again each row vector gives a relation 
of $b_1,\dots,b_8$. 
In this case the Mori cone $\NE(X)$ is 
not simplicial, but is contained in 
the cone spanned by $\gamma_1,\dots,\gamma_6$. 
Let $p_1,\dots,p_6 \in H^2(X)$ be the dual 
basis of $\gamma_1,\dots,\gamma_6$ 
and let $q_1,\dots, q_6$ be the corresponding 
Novikov variables. Note that 
$\deg q_1 = \cdots = \deg q_4 =0$ 
and $\deg q_5 = \deg q_6 = 4$. 

\noindent 
\textbf{Mirror coordinates.}
\begin{equation*}
  y_i =
  \begin{cases}
    \displaystyle\frac{q_i}{(1+q_i)^2}, &i =1,2,3,4\\
    q_i (1+q_1)^2(1+q_2)^2(1+q_3)^2(1+q_4)^2, &i=5,6.
  \end{cases}
\end{equation*}
From this we get the elements
\begin{align*}
\tp_i 
=\sum_{i=1}^6
\frac{\partial \log q_k}{\partial \log y_i} p_k 
=
\begin{cases}
  \displaystyle \frac{1+q_i}{1-q_i}p_i -\frac{2q_i}{1-q_i}p_5
  -\frac{2q_i}{1-q_i}p_6, &i=1,2,3,4.\\
\displaystyle  p_i, &i=5,6.
\end{cases}
\end{align*}
\textbf{The Batyrev and Seidel elements.} 
\begin{align*}
  \tS_1& =\tD_1= \tp_1+\tp_4-2\tp_5-\tp_6 \\
  &=\frac{1+q_1}{1-q_1}p_1 + \frac{1+q_4}{1-q_4}p_4 -
  \left(\frac{1+q_1}{1-q_1}+\frac{1+q_4}{1-q_4}\right) p_5
  -\left(\frac{1+q_1}{1-q_1}+\frac{2q_4}{1-q_4}\right) p_6\\
\tS_2&=\tD_2= \tp_1+\tp_2-\tp_5-2\tp_6\\
  &=\frac{1+q_1}{1-q_1}p_1 + \frac{1+q_2}{1-q_2}p_2 -
  \left(\frac{1+q_1}{1-q_1}+\frac{2q_2}{1-q_2}\right) p_5
  -\left(\frac{1+q_1}{1-q_1}+\frac{1+q_2}{1-q_2}\right) p_6\\
\tS_3&=\tD_3= \tp_2+\tp_3-2\tp_5-\tp_6\\
  &=\frac{1+q_2}{1-q_2}p_2 + \frac{1+q_3}{1-q_3}p_3 -
  \left(\frac{1+q_2}{1-q_2}+\frac{1+q_3}{1-q_3}\right) p_5
  -\left(\frac{1+q_2}{1-q_2}+\frac{2q_3}{1-q_3}\right) p_6\\
\tS_4&=\tD_4= \tp_3+\tp_4-\tp_5-2\tp_6\\
  &=\frac{1+q_3}{1-q_3}p_3 + \frac{1+q_4}{1-q_4}p_4 -
  \left(\frac{1+q_3}{1-q_3}+\frac{2q_4}{1-q_4}\right) p_5
  -\left(\frac{1+q_3}{1-q_3}+\frac{1+q_4}{1-q_4}\right) p_6\\
\tS_5& = \frac{1}{1+q_1} \tD_5, \quad 
\tD_5=-2\tp_1+2\tp_5+2\tp_6 =\frac{1+q_1}{1-q_1}
(-2p_1+2p_5+2p_6) =\frac{1+q_1}{1-q_1} D_5\\
\tS_6 & = \frac{1}{1+q_2} \tD_6, \quad 
\tD_6=-2\tp_2+2\tp_5+2\tp_6 
=\frac{1+q_2}{1-q_2}
(-2p_2+2p_5+2p_6)=\frac{1+q_2}{1-q_2} D_6 \\
\tS_7 &= \frac{1}{1+q_3} \tD_7, \quad 
\tD_7 =-2\tp_3+2\tp_5+2\tp_6=\frac{1+q_3}{1-q_3}
(-2p_3+2p_5+2p_6) =\frac{1+q_3}{1-q_3} D_7 \\ 
\tS_8 & = \frac{1}{1+q_4} \tD_8, \quad
\tD_8 =-2\tp_4+2\tp_5+2\tp_6=\frac{1+q_4}{1-q_4}
(-2p_4+2p_5+2p_6)=\frac{1+q_4}{1-q_4} D_8.
\end{align*}

\noindent
\textbf{Reconstruction from the Seidel elements.} 
In this case, the divisors $D_1,\dots,D_4$ 
and $c_1(X)$ do not span $H^2(X)$ and 
the method in the previous example does no apply. 
Assume that we know the Seidel elements 
$\tS_1,\dots, \tS_8$ given above. 
We will reconstruct the Batyrev elements 
from these. 
We set $\tD_i :=\tS_i$ for $i=1,2,3,4$ 
and $\tD_i := H_i \tS_i$ for $i=5,6,7,8$ 
for some $H_i\in \Q\llbracket q\rrbracket$. 
The linear relation
\begin{align*}
\tD_1+\tD_5+\tD_2=\tD_4+\tD_7+\tD_3.
\end{align*}
gives the equation 
\[
H_5 \tS_5-H_7 \tS_7= \tS_3+\tS_4-\tS_1-\tS_2.  
\]
%
From the computation above, we have 
\begin{align*}
  -\frac{H_7}{1-q_3}D_7+ \frac{H_5}{1-q_1}D_5=
  -\frac{1+q_3}{1-q_3}D_7+ \frac{1+q_1}{1-q_1}D_5.
\end{align*}
Since $D_7$ and $D_5$ are linearly independent, 
it follows that $H_5=1+q_1$, $H_7=1+q_3$. 
Similarly one can solve for $H_6,H_8$. 

\subsubsection{Crepant resolution of 
$\P(1,1,2)/\Z_2$} 
\label{eg: triang} 
Consider the $\Z_2$ action on the weighted 
projective space $\P(1,1,2)$ given by 
\[
[z_1,z_2, z_3] \mapsto [-z_1,z_2, -z_3]. 
\]
The quotient $\P(1,1,2)/\Z_2$ has three singular 
points, two of which are type $A_1$ and 
the other is of type $A_3$. 
Let $X$ be the minimal resolution of $\P(1,1,2)/\Z_2$.  
The fan of $X$ is given by (see Figure \ref{fig:count})
\begin{gather*} 
b_1 = (-2,1), \ b_2 = (2,1), \ b_3 = (0,-1) \\ 
b_4 = (-1,1), \ b_5 = (0,1), \ b_6 = (1,1), \ 
b_7 = (1,0), \ b_8 = (-1,0) 
\end{gather*} 
The Picard number of $X$ is 6. 
The vectors $b_1,b_2,b_3$ are vertices 
of the fan polytope. 

\begin{figure}
\begin{center}
\scalebox{.7}{
  \input{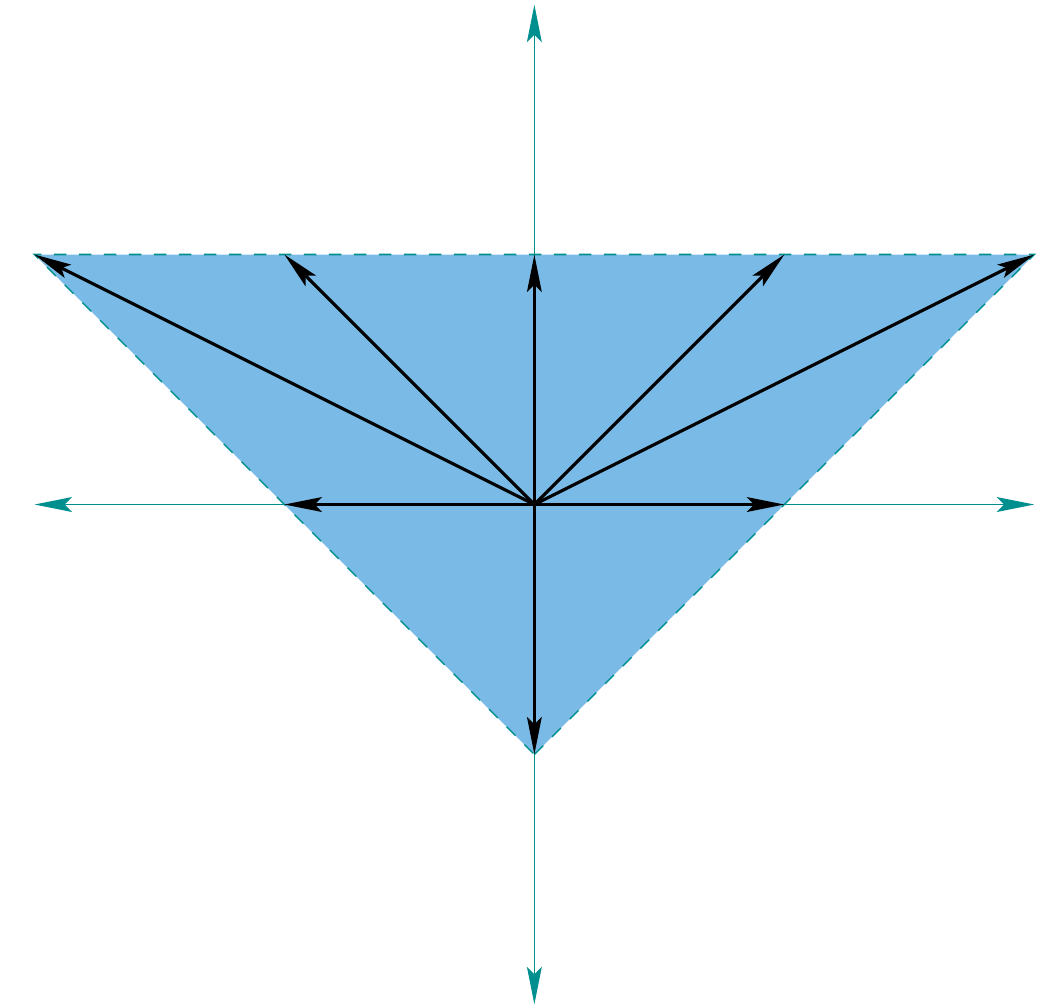_t}
}
\end{center}
\caption{Fan polytope of a smooth NEF toric manifold 
for which the divisors corresponding to vertices 
and $c_1$ do not span $H^2(X)$. This gives a 
crepant resolution of $\P(1,1,2)/\Z_2$.}
\label{fig:count}
\end{figure}

Let $\{\gamma_1,\dots,\gamma_6\}$ 
be the basis of $H_2(X,\Q)$
such that 
\[
\left(\lan D_i, \gamma_j\ran \right)^{\rm T}  
= \begin{pmatrix} 
1 & 0 & 0 & -2 & 1 & 0 & 0 & 0 \\ 
0 & 0 & 0 & 1 & -2 & 1 & 0 & 0 \\ 
0 & 1 & 0 & 0 & 1 & -2 & 0 & 0 \\ 
0 & 1 & 1 & 0 & 0 & 0 & -2 & 0 \\ 
1 & 0 & 1 & 0 & 0 & 0 & 0 & -2 \\ 
-2 & -2 & -1 & 2 & -1 & 2&2& 2
\end{pmatrix} 
\] 
The Mori cone $\NE(X)$ is not simplicial 
but is contained in the 
simplicial cone generated by $\gamma_1,\dots, \gamma_6$. 
(Here $\gamma_i \in \NE(X)$ for $1\le i\le 5$, 
but $\gamma_6 \notin \NE(X)$. Note also that 
$\gamma_1,\dots, \gamma_6$ do not form an integral basis.) 
Set the Novikov variable $q_i := q^{\gamma_i}$. 
Let $\{p_1,\dots,p_6\}\subset H^2(X,\Q)$ 
be the dual basis of $\{\gamma_1,\dots, \gamma_6\}$.  

\noindent
\textbf{Mirror coordinates.} 
The mirror coordinates for surface 
$A_n$ singularity resolutions are calculated in 
\cite[Appendix A, Proposition A.6]{CCIT}. 
The same method applies here, as the mirror map 
for the $A_3$ singularity resolution appears 
as part of the mirror map in this case. 
We only state the final result here. 

\begin{align*}
\begin{split}  
y_1 &= q_1 \frac{1+ q_2 + q_1 q_2 + q_2 q_3 
+ q_1 q_2 q_3 + q_1 q_2^2 q_3}
{(1+ q_1 + q_1 q_2 + q_1 q_2 q_3)^2}, \\
y_2 & = q_2 \frac{
(1+q_1 + q_1 q_2 + q_1 q_2 q_3)
(1+ q_3 + q_2 q_3 + q_1 q_2 q_3)}
{(1+q_2 +q_1 q_2 + q_2 q_3 + q_1 q_2 q_3 
+ q_1 q_2^2 q_3)^2}, \\ 
y_3 & = q_3 \frac{1+q_2 +q_1 q_2 +  q_2q_3
+ q_1 q_2 q_3 + q_1 q_2^2 q_3}
{(1+q_3 + q_2 q_3 + q_1 q_2 q_3)^2}, \\
y_4 & = \frac{q_4}{(1+q_4)^2}, \\
y_5 & = \frac{q_5}{(1+q_5)^2}, \\ 
y_6 & = q_6 
\frac{(1+q_1+q_1q_2+q_1q_2q_3)^2(1+q_3+q_2q_3+q_1q_2q_3)^2
(1+q_4)^2 (1+q_5)^2}
{1+q_2 +q_1 q_2 + q_2 q_3 + q_1 q_2 q_3 
+ q_1 q_2^2 q_3}.
\end{split} 
\end{align*} 

\noindent 
\textbf{The Batyrev and Seidel elements.}
Here we present a cohomology class $\sum_{i=1}^6 c_i p_i$ 
by the column vector $(c_1,c_2,\dots,c_6)^{\rm T}$.

\[
\tD_1 =\tS_1 = 
\begin{pmatrix} 
\frac{1+q_1-q_2-q_2q_3-q_1^2q_2+q_2^2q_3 
-q_1^2q_2q_3 + q_1^3q_2^2 q_3} 
{(1-q_1)(1-q_2)(1-q_1q_2)(1-q_2q_3)(1-q_1q_2q_3)} \\
-\frac{q_1(1-q_3-q_1q_2 +q_2^2 -q_1 q_2^2 
+ q_1 q_2 q_3 -q_2^3 q_3 + q_1 q_2^3 q_3)} 
{(1-q_1)(1-q_2)(1-q_3)(1-q_1q_2)(1-q_2 q_3)} \\ 
\frac{q_1q_2^2(1-q_2q_3 -q_1q_2q_3 +q_3^3 
-q_2q_3^3+ q_1 q_2^2 q_3^2 -q_1q_2q_3^3  + q_1 q_2^2 q_3^3)}
{(1-q_2)(1-q_3)(1-q_1q_2)(1-q_2q_3)(1-q_1q_2q_3)} \\
0 \\
\frac{1+q_5}{1-q_5} \\
\frac{1-3 q_5}{1-q_5} +  
\frac{q_1}{(1-q_1) ( 1-q_2 ) ( 1-q_2 q_3) }
- 
\frac{3}{( 1- q_1 )(1- q_1q_2) 
(1- q_1 q_2 q_3)}
-
\frac {2 q_1 q_2^2}{(1-q_2)(1-q_3)(1-q_1q_2)}
\end{pmatrix} 
\]
\[
\tD_2 = \tS_2 = 
\begin{pmatrix} 
\frac{q_2^2 q_3 (1 -q_1q_2+q_1^3 -q_1q_2q_3  
- q_1^3 q_2 - q_1^3 q_2 q_3 + q_1^2 q_2^2 q_3 + 
q_1^3q_2^2q_3)}
{(1-q_1)(1-q_2)(1-q_1q_2)(1-q_2 q_3)(1-q_1 q_2q_3)} \\
-\frac{q_3(1-q_1+q_2^2-q_2q_3 +q_1q_2q_3 
-q_2^2 q_3 -q_1 q_2^3 +q_1q_2^3q_3)}
{(1-q_1)(1-q_2)(1-q_3)(1-q_1q_2)(1-q_2q_3)} \\
\frac{1-q_2+q_3-q_1q_2+q_1q_2^2-q_2q_3^2
-q_1 q_2 q_3^2+q_1 q_2^2 q_3^3}
{(1-q_2)(1-q_3)(1-q_1 q_2)(1-q_2 q_3)(1-q_1q_2q_3)} \\ 
\frac{1+q_4}{1-q_4} \\ 
0 \\ 
\frac{1-3q_4}{1-q_4} 
+ \frac{q_3}{(1-q_2)(1-q_3)(1-q_1q_2)} 
- \frac{3}{(1-q_3)(1-q_2q_3)(1-q_1q_2q_3)} 
- \frac{2 q_2^2 q_3}
{(1-q_1)(1-q_2)(1-q_2q_3)}
\end{pmatrix} 
\]
\[
\tD_3 = \tS_3 = 
\begin{pmatrix}
0 \\ 
0 \\
0 \\ 
\frac{1+q_4}{1-q_4} \\ 
\frac{1+q_5}{1-q_5} \\ 
-\frac{2}{1-q_4} + \frac{1-3q_5}{1-q_5} 
\end{pmatrix} 
\]
\begin{align*} 
\tD_4 &= (1+q_1+ q_1q_2 + q_1 q_2 q_3) \tS_4, \\
\tS_4 &= 
\begin{pmatrix} 
-\frac{2-q_2 -q_1 q_2 - q_2 q_3 - q_1 q_2 q_3 + q_2^3 q_3 
+ q_1^2 q_2^2 q_3}{(1-q_1)(1-q_2) (1-q_1q_2)(1-q_2q_3) 
(1-q_1q_2 q_3)} \\
\frac{1+q_2-q_3 -2 q_1 q_2 + q_1 q_2 q_3 - q_2^3 q_3 
+ q_1 q_2^2 q_3 }{(1-q_1)(1-q_2) (1-q_3)(1-q_1 q_2) 
(1-q_2 q_3)}  \\ 
-\frac{q_2(1 - q_2 q_3 + q_3^2 - q_1 q_2 q_3 
- q_2 q_3^2 - q_1 q_2 q_3^2 + 2 q_1 q_2^2 q_3^2)} 
{(1-q_2)(1-q_3)(1-q_1 q_2)(1-q_2 q_3)(1-q_1 q_2 q_3)} \\ 
0 \\
0 \\ 
-\frac{1}{(1-q_1)(1-q_2)(1-q_2 q_3)} 
+ \frac{3}{(1-q_1)(1-q_1q_2)(1-q_1 q_2 q_3)} 
+ \frac{2q_2}{(1-q_2)(1-q_3)(1-q_1q_2)}  
\end{pmatrix} 
\end{align*} 
\begin{align*} 
\tD_5 &= (1+q_2 + q_1q_2 + q_2 q_3 +q_1q_2 q_3 + q_1 q_2^2 q_3) 
\tS_5 \\ 
\tS_5 &= 
\begin{pmatrix} 
\frac{1+q_1 -2 q_1 q_2 - 2 q_1 q_2 q_3 + q_1 q_2^2 q_3 
+ q_1^2 q_2^2 q_3} 
{(1-q_1)(1-q_2)(1-q_1 q_2) (1- q_2 q_3)(1-q_1 q_2 q_3)} \\ 
-\frac{2 -q_1 -q_3 - q_1 q_2 - q_2 q_3 + 2 q_1 q_2 q_3}
{(1-q_1)(1-q_2)(1-q_3)(1-q_1q_2)(1-q_2q_3)} \\ 
\frac{1+ q_3 - 2 q_2 q_3 - 2 q_1 q_2 q_3 + q_1 q_2^2 q_3 
+ q_1 q_2^2 q_3^2}
{(1-q_2)(1-q_3)(1-q_1q_2)(1-q_2q_3)(1-q_1q_2q_3)} \\ 
0 \\ 
0 \\
\frac{1}{(1-q_1)(1-q_2)(1-q_2 q_3)} 
- \frac{2}{(1-q_2)(1-q_3)(1-q_1q_2)} 
-\frac{3q_1}{(1-q_1)(1-q_1q_2)(1-q_1q_2q_3)}  
\end{pmatrix}  
\end{align*} 
\begin{align*}
\tD_6 & = (1+ q_3 + q_2 q_3 + q_1 q_2 q_3) \tS_6 \\ 
\tS_6 & =
\begin{pmatrix} 
-\frac{q_2(1+q_1^2 - q_1 q_2 - q_1^2 q_2 - q_1 q_2 q_3 
-q_1^2 q_2 q_3 + 2 q_1^2 q_2^2 q_3)}
{(1-q_1)(1-q_2)(1-q_1q_2)(1-q_2 q_3)(1-q_1 q_2 q_3)} \\ 
\frac{1 - q_1 + q_2 -2 q_2 q_3 - q_1 q_2^2 + q_1 q_2 q_3 
+ q_1 q_2^2 q_3}
{(1-q_1)(1-q_2)(1-q_3)(1-q_1q_2)(1-q_2 q_3)} \\
- \frac{2 - q_2 -q_1 q_2 - q_2 q_3 + q_1 q_2^2 
- q_1 q_2 q_3 + q_1 q_2^2 q_3^2}
{(1-q_2)(1-q_3)(1-q_1q_2)(1-q_2 q_3) (1-q_1 q_2 q_3)} \\ 
0 \\
0 \\ 
-\frac{1}{(1-q_2)(1-q_3)(1-q_1q_2)} 
+ \frac{3}{(1-q_3)(1-q_2q_3)(1-q_1q_2 q_3)} 
+ \frac{2 q_2} { (1-q_1)(1-q_2)(1-q_2 q_3)} 
\end{pmatrix}  
\end{align*} 
\begin{align*}
\tD_7 = (1+q_4) \tS_7, 
\quad 
\tS_7 = 
\begin{pmatrix}
0 \\ 
0 \\ 
0 \\ 
-\frac{2}{1-q_4} \\ 
0 \\
\frac{2}{1-q_4} 
\end{pmatrix}, 
\quad 
\tD_8 = (1+q_5) \tS_8, 
\quad 
\tS_8 = 
\begin{pmatrix} 
0 \\
0\\
0 \\
0 \\
-\frac{2}{1-q_5} \\ 
\frac{2}{1-q_5} 
\end{pmatrix}.   
\end{align*} 
\textbf{Reconstruction of Batyrev from Seidel.} 
Again the divisors $D_1,D_2,D_3$ and $c_1(X)$ 
do not span $H^2(X)$ in this case. 
We have the following 
linear relations: 
\begin{align*} 
2 \tD_1 + \tD_4 + \tD_8 &= \tD_6 + \tD_7 + 2 \tD_2, \\ 
\tD_1 + \tD_4 + \tD_5 + \tD_6 + \tD_2 &= \tD_3. 
\end{align*} 
Suppose we know only the Seidel elements 
$\tS_1,\dots,\tS_8$ given above. 
We can check (assisted by computer) that there exist 
unique functions $x, y, z,u,v$ of $q_i$'s 
which solves the linear equations: 
\begin{align*} 
2 \tS_1 + x \tS_4 + v \tS_8 &=
 z \tS_6 + u \tS_7 + 2 \tS_2, \\ 
\tS_1 + x \tS_4 + y \tS_5 + z \tS_6 + \tS_2 
& = \tS_3.  
\end{align*} 
Here $x,y,z,u,v$ are given by (as expected) 
\begin{align*} 
x & = 1+q_1 + q_1 q_2 + q_1 q_2 q_3 \\ 
y & = 1 + q_2 + q_1 q_2 + q_2 q_3 + q_1 q_2 q_3 
+ q_1 q_2^2 q_3 \\ 
z &= 1 + q_3 + q_2 q_3 + q_1 q_2 q_3 \\ 
u & = 1 + q_4 \\ 
v & = 1+ q_5.   
\end{align*} 
The Batyrev elements are given as: 
$\tD_i = \tS_i$ for $1\le i\le 3$ 
and $\tD_4 = x \tS_4$, $\tD_5 = y \tS_5$, 
$\tD_6 = z \tS_6$, $\tD_7 = u \tS_7$, 
$\tD_8= v \tS_8$. 
Then the Batyrev elements determine the 
mirror co-ordinates $y_1,\dots,y_6$. 

\subsubsection{Crepant resolution of $\P^2/\Z_3$} 
Consider the toric variety $X$ given by 
the fan spanned by the following vectors 
(see Figure \ref{fig:cubic}): 
\begin{gather*} 
b_1 = (0,1),\ b_2 = (1,0),\ b_3 = (1,-1), \ 
b_4 = (0,-1), \ b_5 = (-1,0), \ b_6 = (-1,1) \\ 
b_7 = (-1,2), \ b_8 = (2,-1), \ b_9 = (-1,-1) 
\end{gather*} 
Only $b_7,b_8,b_9$ are the vertices of the fan polytope. 
A cubic surface in $\P^3$ can degenerate 
to the singular toric variety $\P^2/\Z_3$ 
(with 3 nodes of type $A_2$) 
and $X$ is a minimal resolution of $\P^2/\Z_3$
where the $\Z_3$-action on $\P^2$ is 
given by 
$[z_1,z_2,z_3] \mapsto [z_1,\omega z_2, \omega^2 z_3]$.  
We can construct $X$ as a 6 times blowup 
of $\P^2$ at infinitely near points. 
Thus $X$ is deformation equivalent to 
a del Pezzo surface of degree 3 (i.e.\ cubic 
surface). 

\begin{figure}
\begin{center}
\scalebox{.7}{
  \input{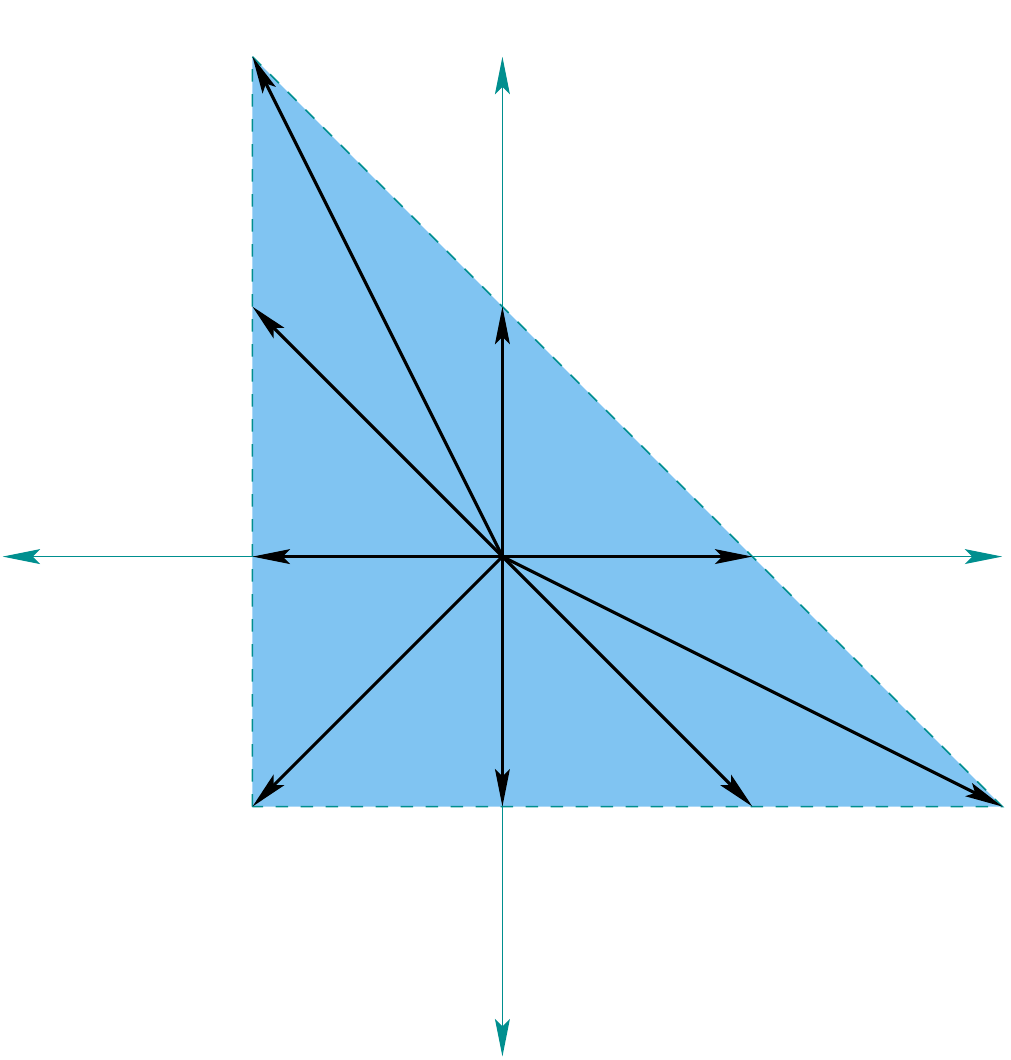_t}
}
\end{center}
\caption{Toric degeneration of a cubic surface}
\label{fig:cubic}
\end{figure}

We take a basis $\{\gamma_1,\dots, \gamma_7\}$ of $H_2(X,\Q)$ 
such that the intersection matrix becomes 
\[
\left(\lan D_i, \gamma_j \ran\right)^{\rm T} 
=  
\begin{pmatrix} 
-2 & 1 & 0 & 0 & 0 & 0 & 1 & 0 & 0 \\ 
1 & -2 & 0 & 0 & 0 & 0 & 0 & 1 & 0 \\ 
0 & 0 & -2 & 1 & 0 & 0 & 0 & 1 & 0 \\ 
0 & 0 & 1 & -2 & 0 & 0 & 0 & 0 & 1 \\ 
0 & 0 & 0 & 0 & -2 & 1 & 0 & 0 & 1 \\
0 & 0 & 0 & 0 & 1 & -2 & 1 & 0 & 0 \\
2 & 2 & 2 & 2 & 2 & 2 & -3 & -3 & -3 
\end{pmatrix} 
\] 
The Mori cone $\NE(X)$ is contained in the simplicial 
cone generated by $\gamma_i$'s. 
Note that $\gamma_1,\dots, \gamma_7$ are not a $\Z$-basis 
and that $\NE(X)$ itself is not simplicial.
We have $\gamma_i \in \NE(X)$, $1\le i \le 6$ and 
$\gamma_7 \notin \NE(X)$. 
As usual, we take $\{p_1,\dots, p_7\} \subset H^2(X,\Q)$ 
to be the dual basis of $\{\gamma_1,\dots, \gamma_7\}$ 
and $q_i := q^{\gamma_i}$ to be the Novikov variable. 

\noindent 
\textbf{Mirror coordinates.} 
\begin{align*} 
y_1 & = q_1 \frac{1+q_2 + q_1q_2}{(1+q_1 + q_1q_2)^2} \\ 
y_2 & = q_2 \frac{1+ q_1 + q_1q_2}{(1+q_2 + q_1q_2)^2} \\
y_3 & = q_3 \frac{1+ q_4 + q_3 q_4}{(1+q_3 + q_3 q_4)^2} \\ 
y_4 & = q_4 \frac{1+ q_3+ q_3 q_4}{(1+q_4+q_3 q_4)^2} \\
y_5 & = q_5 \frac{1+ q_6 + q_5 q_6}{(1+q_5 + q_5 q_6)^2} \\ 
y_6 & = q_6 \frac{1+ q_5 + q_5 q_6}{(1+q_6 + q_5 q_6)^2} \\ 
y_7 & = q_7 (1+q_1+ q_1q_2)^2 (1+q_2 + q_1 q_2)^2 
(1+q_3 + q_3q_4)^2  \\ 
& \quad \cdot (1+ q_4 + q_3 q_4)^2 
(1+ q_5 + q_5 q_6)^2 (1+ q_6 + q_5 q_6)^2 
\end{align*} 
\textbf{The Batyrev and Seidel elements.} 
\[ 
\tD_1 = (1+q_1 + q_1q_2) \tS_1, \quad 
\tS_1 = \frac{1}{\Delta_{12}} 
\begin{pmatrix} 
- 2 + q_2 + q_1 q_2 \\
1 + q_2 -2 q_1 q_2 \\ 
0 \\ 
0 \\
0 \\
0 \\
2(1 - 2 q_2 + q_1 q_2) 
\end{pmatrix}
\] 
\[
\tD_2 = ( 1+ q_2 + q_1 q_2) \tS_2, \quad 
\tS_2 = 
\frac{1}{\Delta_{12}} 
\begin{pmatrix} 
1 + q_1 - 2 q_1 q_2 \\ 
- 2 + q_1  + q_1q_2 \\ 
0 \\
0 \\
0 \\
0 \\
2(1 -2 q_1 + q_1 q_2) 
\end{pmatrix}
\]
\[ 
\tD_3 = (1+q_3 + q_3q_4) \tS_3, \quad 
\tS_3 = \frac{1}{\Delta_{34}} 
\begin{pmatrix} 
0 \\ 
0 \\
- 2 + q_4 + q_3 q_4 \\
1 + q_4 -2 q_3 q_4 \\ 
0 \\
0 \\
2(1 - 2 q_4 + q_3 q_4) 
\end{pmatrix}
\] 
\[
\tD_4 = ( 1+ q_4 + q_3 q_4) \tS_4, \quad 
\tS_4 = 
\frac{1}{\Delta_{34}} 
\begin{pmatrix} 
0 \\
0 \\
1 + q_3 - 2 q_3 q_4 \\ 
- 2 + q_3  + q_3q_4 \\ 
0 \\
0 \\
2(1 -2 q_3 + q_3 q_4) 
\end{pmatrix}
\]
\[ 
\tD_5 = (1+q_5 + q_5q_6) \tS_5, \quad 
\tS_5 = \frac{1}{\Delta_{56}} 
\begin{pmatrix} 
0 \\ 
0 \\
0 \\
0 \\
- 2 + q_6 + q_5 q_6 \\
1 + q_6 -2 q_5 q_6 \\ 
2(1 - 2 q_6 + q_5 q_6) 
\end{pmatrix}
\] 
\[
\tD_6 = ( 1+ q_6 + q_5 q_6) \tS_6, \quad 
\tS_6 = 
\frac{1}{\Delta_{56}} 
\begin{pmatrix} 
0 \\
0 \\
0 \\
0 \\
1 + q_5 - 2 q_5 q_6 \\ 
- 2 + q_5  + q_5q_6 \\ 
2(1 -2 q_5 + q_5 q_6) 
\end{pmatrix}
\]
\[
\tD_7 = \tS_7 = \frac{1}{\Delta_{12}\Delta_{56}} 
\begin{pmatrix}
  (1+q_1 - q_2 -q_1^2 q_2) \Delta_{56}\\
  - q_1 (1 - q_1 q_2 + q_2^2 - q_1 q_2^2)\Delta_{56} \\
  0 \\
  0 \\
  - q_6(1+ q_5^2 - q_5 q_6 - q_5^2 q_6)\Delta_{12} \\
  (1- q_5 + q_6 -q_5 q_6^2) \Delta_{12}\\
  5-2(2-q_1-2q_2+q_1^2q_2)\Delta_{12}^{-1}-
  2(2-2q_5-q_6+q_5q_6^2)\Delta_{56}^{-1}
\end{pmatrix} 
\] 
\[
\tD_8 = \tS_8 = \frac{1}{\Delta_{12}\Delta_{34}}
\begin{pmatrix} 
-q_2(1 - q_1 q_2 + q_1^2 - q_1^2 q_2)\Delta_{34} \\ 
(1 - q_1 + q_2 - q_1 q_2^2)\Delta_{34} \\ 
(1 + q_3 -q_4 - q_3^2 q_4)\Delta_{12} \\ 
-q_3(1 - q_3 q_4 + q_4^2 - q_3 q_4^2)\Delta_{12} \\ 
0 \\ 
0 \\ 
5-2(2-2q_1-q_2+q_1q_2^2)\Delta_{12}^{-1}-
  2(2-q_3-2q_4+q_3^2q_4)\Delta_{34}^{-1}
\end{pmatrix} 
\]
\[
\tD_9 = \tS_9 
= \frac{1}{\Delta_{34}\Delta_{56}} 
\begin{pmatrix} 
0 \\ 
0 \\ 
- q_4(1 + q_3^2 - q_3 q_4 - q_3^2 q_4)\Delta_{56} \\ 
(1 - q_3 + q_4 - q_3 q_4^2)\Delta_{56} \\ 
(1 + q_5 - q_6 - q_5^2 q_6)\Delta_{34} \\ 
-q_5(1 - q_5 q_6 - q_5 q_6^2 + q_6^2) \Delta_{34} \\
5-2(2-2q_3-q_4+q_3q_4^2)\Delta_{34}^{-1}-
  2(2-q_5-2q_6+q_5^2q_6)\Delta_{56}^{-1} 
\end{pmatrix} 
\]
where $\Delta_{ij}=(1-q_i)(1-q_j)(1-q_iq_j)$. 

\noindent
\textbf{Reconstruction of Batyrev from Seidel.} 
We have the following linear relations: 
\begin{align*} 
2 \tD_7 + \tD_1 + \tD_6 &= \tD_9 + \tD_4 + \tD_3 + \tD_8 \\ 
2 \tD_8 + \tD_2 + \tD_3 &= \tD_9 + \tD_5 + \tD_6 + \tD_7 
\end{align*} 
Suppose we only know the Seidel elements 
$\tS_1,\dots, \tS_9$. 
We can check that the following linear 
equation for $x,y,z,w,u,v$ has a unique solution: 
\begin{align*}
2 \tS_7 + x \tS_1 + v \tS_6 &= \tS_9 + w\tS_4 + z\tS_3 + \tS_8 \\ 
2 \tS_8 + y\tS_2 + z \tS_3 &= \tS_9 + u \tS_5 + v \tS_6 + \tS_7 
\end{align*} 
The Batyrev elements are given as 
$\tD_1 = x \tS_1$, $\tD_2 = y \tS_2$, $\tD_3 = z \tS_3$, 
$\tD_4 = w \tS_4$, $\tD_5 = u \tS_5$, $\tD_6 = v \tS_6$ 
and $\tD_i = \tS_i$ for $7\le i\le 9$. 
The mirror co-ordinates $y_i$ are determined 
by these. 
\begin{remark} 
It is interesting to compare the Givental-Hori-Vafa mirrors of 
$X$ and a cubic surface. 
The mirror of $X$ is a Landau-Ginzburg model 
defined by the function $W_y$ on the torus 
$(\C^\times)^2$ with coordinates $x_1,x_2$ 
\begin{align*} 
W_{y}(x_1,x_2) & = a_7 x_1^{-1} x_2^2 + x_2 + 
x_1 + a_8 x_1^2 x_2^{-1} \\
& + a_3 x_1 x_2^{-1} + a_4 x_2^{-1}  
+ a_9 x_1^{-1} x_2^{-1} 
+ a_5 x_1^{-1}  + a_6 x_1^{-1} x_2  
\end{align*} 
where the coefficients 
$a_3,\dots,a_9$ are determined by the relation 
\[
y_j = \prod_{i=3}^9 a_i^{\lan D_i, \gamma_j\ran}, \quad 
1\le j \le 7. 
\]
On the other hand, the mirror of a (generic) 
cubic surface $Y$ is \cite{Prz}
\[
V_u(x_1,x_2) = u \frac{(1+ x_1 + x_2)^3}{x_1x_2}.
\]
Under the specialization 
$y_1 = \cdots = y_6 = 1/3$ and $y_7 = 3^{12} u^3$, 
we have 
\[
V_u(x_1,x_2) = W_y(3 u x_1, 3u x_2) + 6 u.  
\] 
\end{remark} 


\begin{thebibliography}{}
\providecommand\bibmarginpar{\leavevmode\marginpar}
\def\urlstyle#1{{\tt #1}}

\bibitem{Au-To04}
\textbf{M Audin}, \emph{Torus actions on symplectic manifolds}, volume~93 of
  \emph{Progress in Mathematics}, revised edition, Birkh\"auser Verlag, Basel
  (2004)

\bibitem{Ba-Qu93}
\textbf{V\,V Batyrev}, \emph{Quantum cohomology rings of toric manifolds},
  Ast\'erisque  (1993) 9--34Journ{\'e}es de G{\'e}om{\'e}trie Alg{\'e}brique
  d'Orsay (Orsay, 1992)

\bibitem{CCIT} 
\textbf{T Coates}, \textbf{A Corti}, \textbf{H Iritani}, \textbf{H-H Tseng} 
\emph{Computing genus zero twisted Gromov-Witten invariants}, 
Duke Math J.\ Vol 147, No.3 (2009) 377--438. 

\bibitem{CoKa-Mi99}
\textbf{D\,A Cox}, \textbf{S Katz}, \emph{Mirror symmetry and algebraic
  geometry}, volume~68 of \emph{Mathematical Surveys and Monographs}, American
  Mathematical Society, Providence, RI (1999)

\bibitem{CoLi-To10}
\textbf{D\,A Cox}, \textbf{J Little}, \textbf{H Schenck}, \emph{Toric
  Varieties}, American Mathematical Society (2010)

\bibitem{Fr-59} 
\textbf{T Frankel}, 
\emph{Fixed points and torsion on K\"{a}hler manifolds}, 
Ann.\ of Math.\ (2) 70 (1959) 1--8. 

\bibitem{FuOh-La10A}
\textbf{K Fukaya}, \textbf{Y-G Oh}, \textbf{H Ohta}, \textbf{K Ono},
  \href{http://arxiv.org/abs/1009.1648v1} {\emph{Lagrangian Floer theory and
  mirror symmetry on compact toric manifolds}}, ArXiv Mathematics e-prints
  1009.1648v1 (2010)

\bibitem{Gi-Eq96}
\textbf{A\,B Givental}, \emph{Equivariant {G}romov-{W}itten invariants},
  Internat. Math. Res. Notices  (1996) 613--663

\bibitem{Gi-A-98}
\textbf{A Givental}, \emph{A mirror theorem for toric complete intersections},
  from: ``Topological field theory, primitive forms and related topics
  ({K}yoto, 1996)'', Progr. Math. 160, Birkh\"auser Boston, Boston, MA (1998)
  141--175

\bibitem{Go-Qu06}
\textbf{E Gonzalez}, \href{http://dx.doi.org/10.1090/S0002-9947-06-04038-4}
  {\emph{Quantum cohomology and {$S^1$}-actions with isolated fixed points}},
  Trans. Amer. Math. Soc. 358 (2006) 2927--2948 (electronic)

\bibitem{Gu-Mo94}
\textbf{V Guillemin}, \emph{Moment Maps and Combinatorial Invariants of
  {H}amiltonian {$T^n$}-Spaces}, volume 122 of \emph{Progress in Mathematics},
  Birkh\"{a}user, Boston (1994)


\bibitem{LaMcPo-To99}
\textbf{F Lalonde}, \textbf{D McDuff}, \textbf{L Polterovich},
  \href{http://dx.doi.org/10.1007/s002220050289} {\emph{Topological rigidity of
  {H}amiltonian loops and quantum homology}}, Invent. Math. 135 (1999) 369--385

\bibitem{LiTi-Co99}
\textbf{J Li}, \textbf{G Tian}, \emph{Comparison of algebraic and symplectic
  {G}romov-{W}itten invariants}, Asian J. Math. 3 (1999) 689--728

\bibitem{Mc-Qu00}
\textbf{D McDuff}, \href{http://dx.doi.org/10.1142/S0129167X00000337}
  {\emph{Quantum homology of fibrations over {$S^2$}}}, Internat. J. Math. 11
  (2000) 665--721

\bibitem{Mc-Sy06}
\textbf{D McDuff}, \emph{Symplectomorphism groups and quantum cohomology},
  from: ``The unity of mathematics'', Progr. Math. 244, Birkh\"auser Boston,
  Boston, MA (2006)  457--471

\bibitem{McSa-J04}
\textbf{D McDuff}, \textbf{D Salamon}, 
\emph{$J$-holomorphic curves and symplectic topology}, 
American Mathematical Society Colloquium Publications, 
52. American Mathematical Society, Providence, RI, 2004. 

\bibitem{McTo-To06}
\textbf{D McDuff}, \textbf{S Tolman}, \emph{Topological properties of
  {H}amiltonian circle actions}, IMRP Int. Math. Res. Pap.  (2006) 72826, 1--77

\bibitem{Prz} 
\textbf{V Przyjalkowski}, 
\emph{Hori-Vafa mirror for complete intersections in weighted projective space 
and weak Landau-Ginzburg model}, arXiv:1003.5200. 

\bibitem{Se-pi97}
\textbf{P Seidel}, \emph{{$\pi\sb 1$} of symplectic automorphism groups and
  invertibles in quantum homology rings}, Geom. Funct. Anal. 7 (1997)
  1046--1095

\end{thebibliography}
\def\cprime{$'$} 
\def\cprime{$'$} 
\def\cprime{$'$} 
\def\cprime{$'$}
\def\cprime{$'$} 
\def\polhk#1{\setbox0=\hbox{#1}{\ooalign{\hidewidth
\lower1.5ex\hbox{`}\hidewidth\crcr\unhbox0}}} 
\def\cprime{$'$}
\def\cprime{$'$}

\end{document}